\newcommand{\tc}{{\rm c}}
\newcommand{\td}{{\rm d}}
\newcommand{\ti}{{\rm i}}
\newcommand{\tP}{{\rm P}}
\newcommand{\tH}{{\rm H}}
\newcommand{\ext}{{\rm ext}}
\newcommand{\he}{\hat{e}}
\newcommand{\hp}{\hat{p}}
\newcommand{\hu}{\hat{u}}
\newcommand{\hw}{\hat{w}}
\newcommand{\hphi}{\hat{\phi}}
\newcommand{\hpsi}{\hat{\psi}}
\newcommand{\tdu}{\tilde{u}}
\newcommand{\tdp}{\tilde{p}}
\newtheorem{remark}{Remark}[section]
\newtheorem{prop}{Proposition}[section]
\title{Deep adaptive basis Galerkin method for high-dimensional evolution equations with oscillatory solutions}
\author{Yiqi Gu\thanks{Department of Mathematics, The University of Hong Kong, Pokfulam, Hong Kong ({\tt yiqigu@hku.hk, mng@maths.hku.hk}). This work is supported by HKRGC GRF 12300519, 17201020 and 17300021,
C1013-21GF, C7004-21GF and Joint NSFC-RGC N-HKU76921.} \and Micheal K. Ng\footnotemark[1]}
\begin{document}

\maketitle

\begin{abstract}
In this paper, we study deep neural networks (DNNs) for solving high-dimensional evolution equations with oscillatory solutions. Different from deep least-squares methods that deal with time and space variables simultaneously, we propose a deep adaptive basis Galerkin (DABG) method, which employs the spectral-Galerkin method for the time variable of oscillatory solutions and the deep neural network method for high-dimensional space variables. The proposed method can lead to a linear system of differential equations having unknown DNNs that can be trained via the loss function. We establish a posterior estimates of the solution error, which is bounded by the minimal loss function and the term $O(N^{-m})$, where $N$ is the number of basis functions and $m$ characterizes the regularity of the e'quation. We also show that if the true solution is a Barron-type function, the error bound converges to zero as $M=O(N^p)$ approaches to infinity, where $M$ is the width of the used networks, and $p$ is a positive constant. Numerical examples, including high-dimensional linear evolution equations and the nonlinear Allen-Cahn equation, are presented to demonstrate the performance of the proposed DABG method is better than that of existing DNNs.
\end{abstract}

\begin{keywords}
Galerkin method; Deep learning; Parabolic equation; Hyperbolic equation; Legendre polynomials
\end{keywords}
\begin{AMS}
65M60; 65M15; 68T07; 41A25
\end{AMS}

\pagestyle{myheadings}
\thispagestyle{plain}
\markboth{}{}

\section{Introduction}\label{Sec_introduction}
The partial differential equations (PDEs) that involve time widely appear in areas such as fluid dynamics, biological population and chemical diffusion. Numerical methods for solving evolution equations have been developed extensively in the past several decades. In most cases, the equations are discretized in time by numerical schemes (e.g., linear multistep methods and Runge-Kutta methods), and the derived semi-discrete equations in space are thereafter tackled by traditional PDE solvers such as finite difference methods and finite element methods. Despite high accuracy, these traditional solvers suffer from the curse of dimensionality in storage and hence generally incapable of problems with very high dimensions.
%, say, $\geq10$.

In recent years, a series of methods based on deep neural networks (DNNs) have been proposed to solve high-dimensional PDEs (e.g., \cite{E2017,E2018,Han2018,Khoo2021,Sirignano2018,Berg2018,Zang2019,Li2019,Beck2019,Cai2019,Raissi2019}). Basically, the unknown solution is approximated by a DNN, which is trained through the minimization of specific loss functions. Thanks to the special nonlinear structure, the number of free parameters in a DNN increases mildly with the dimension, making high-dimensional approximations feasible with limited storage. On the other hand, the approximation errors of neural networks have dimension-independent rates for special functions (e.g., continuous functions \cite{Shen2021_3}, H\"older functions \cite{Shen2021_1,Shen2021_2,Jiao2021} and Barron functions \cite{Barron1992,Barron1993,Klusowski2018,E2019,E2020_2,Siegel2020_1,Siegel2020_2,Caragea2020}), hence overcoming or lessening the curse of dimensionality.

\subsection{Problem description}
We focus on the development of DNN-based methods for two typical categories of evolution equations, the parabolic and hyperbolic equations. As an example, we first consider the linear parabolic equation. Given a bounded domain $\Omega\subset\mathbb{R}^d$ with $d\in\mathbb{N}^+$ and $T>0$, we aim to solve
\begin{equation}\label{01}
\begin{cases}
\frac{\partial u(x,t)}{\partial t}+\mathcal{L}_xu(x,t)=f(x,t),\quad\text{in}~\Omega\times(0,T],\\
u(x,0) = g(x),\quad\text{in}~\Omega,\\
u(x,t) = h(x,t),\quad\text{on}~\partial\Omega\times[0,T],
\end{cases}
\end{equation}
where $\mathcal{L}_x$ is a uniformly elliptic operator only involving $x$; $f$, $g$, and $h$ are given functions. Several methods such as physics-informed neural networks \cite{Raissi2019} and deep Galerkin method \cite{Sirignano2018} are developed for similar problems. Overall, these methods can be generalized as the DNN-based least-squares method. More precisely, let $\mathcal{N}$ be a generic class of DNNs, it searches for the solution of \eqref{01} in $\mathcal{N}$ by the following minimization problem,
\begin{gather}\label{03}
\min_{\hu\in\mathcal{N}}~J[\hu]:=\left\|\frac{\partial \hu}{\partial t}+\mathcal{L}_x\hu-f\right\|_{L^2(\Omega\times(0,T])}^2+\lambda_1\left\|\hu(\cdot,0)-g\right\|_{L^2(\Omega)}^2+\lambda_2\left\|\hu-h\right\|_{L^2(\partial\Omega\times[0,T])}^2,\\
\text{with~}\lambda_1,~\lambda_2>0,\notag
\end{gather}
where the loss function is defined as the square sum of equation error and condition errors, and the minimizer $\hu$ is taken as an approximate solution. Note that if \eqref{01} is a well-posed problem, then $\|u\|$ is bounded by $O(\|f\|+\|g\|+\|h\|)$. By the linearity, the error $\|\hu-u\|$ is bounded by the loss function $J[\hu]$. It implies that if a network $\hu$ has a small value of $J[\hu]$, it is close to the true solution $u$.

In practice, the $L^2$ integrals in $J$ are usually discretized and computed by Monte Carlo method (especially when $d$ is large), and the minimization can be solved under systematic deep learning frameworks. The deep least-squares method enjoys the advantages that it is friendly in high dimensions and flexible for various kinds of equations and domains. However, the main difficulty lies in solving the optimization \eqref{03} numerically. Due to the high non-convexity of the loss function, usual optimizers (e.g., stochastic gradient descent) might converge to bad local minimizers if the initialization or hyper-parameters are not configured well. For this issue, one approach is to simplify the loss function. In \cite{Gu2021}, Gu et al. constructed special DNN structures according to the initial/boundary conditions. More precisely, for the problem \eqref{01}, one can design a DNN-based class $\mathcal{N}'$ such that
\begin{equation}
\hu(x,0) = g(x)~\text{in}~\Omega,\quad \hu(x,t) = h(x,t)~\text{on}~\partial\Omega\times[0,T],\quad\forall \hu\in\mathcal{N}'.
\end{equation}
With hypothesis space $\mathcal{N}'$, the condition errors in $J$ automatically vanish. Then the optimization \eqref{03} can be simplified as follows
\begin{equation}\label{44}
\min_{\hu\in\mathcal{N}'}~J'[\hu]:=\left\|\frac{\partial \hu}{\partial t}+\mathcal{L}_x\hu-f\right\|_{L^2(\Omega\times(0,T])}^2.
\end{equation}

This design leads to an easier implementation and facilitates the convergence of optimizers to some extent. Nevertheless, a number of experiments show if the true solution is fairly oscillatory, then optimizers are still likely to find bad local minimizers. For example, the least-squares method fails in the 2-D heat equation with a solution having eight trigonometric waves in time (see Section \ref{sec_case1}). The drawback of least-squares method inspires us to look for more accurate deep methods for underlying oscillatory solutions.

\subsection{Contribution}
In evolution equations, more caution is required for the discretization of the time variable than the space variable, because the stability of the solution is usually sensitive to the approximation scheme in time. Therefore, different from the deep least-squares method that deals with time and space simultaneously in (\ref{03}), we consider addressing the
numerical solution in time and space separately by two distinct techniques respectively. On one hand, the time varies merely in one dimension, which allows us to employ
traditional numerical techniques for the time discretization. Among all available techniques, the spectral-Galerkin method \cite{Shen2011} is well-known for its high accuracy for high-regularity solutions. Especially, if the true solution is $C^\infty$, then the approximate solution obtained by the spectral-Galerkin method has an exponential decaying error order in terms of the degree of freedom, which is much more accurate than other techniques such as linear multistep methods and Runge-Kutta methods. On the other hand, the space variable (possibly of high dimensions) that is intractable by traditional techniques can be handled appropriately
using DNN-based techniques. Consequently, it is promising to develop a hybrid approach combining the traditional high-accuracy Galerkin method and the recent dimension-friendly deep methods.

In this work, we put forward a novel deep adaptive basis Galerkin (DABG) method to solve parabolic and hyperbolic equations. Instead of using a single DNN to approximate the solution as in the previous work, we propose a special adaptive basis structure for approximation. Specifically, the structure is formulated as a tensor product of a sequence of fixed 1-D orthogonal polynomials in time and a sequence of unknown DNNs in space.
According to this structure, two computational steps are performed to solve the PDE. First, we apply the Galerkin method to the original equation to eliminate the time variable, leading to a linear system of differential equations having unknown DNNs. Thanks to the orthogonality of the polynomials, the system is sparse with unknown DNNs. Next, we solve the resulting DNN system by minimizing its residual under a deep least-squares framework.

The theoretical analysis is also conducted for the proposed numerical method. We first develop a posterior estimates that implies the solution error is bounded by the minimal loss function and the term $O(N^{-m})$, where $N$ is the number of basis functions and $m$
is used to characterize the regularity of right hand side function $f$. Moreover, given that the true solution is a Barron-type function, we derive a priori estimates quantifying the error bound by $M$ and $N$, where $M$ is the width of the used networks. We show that
the solution converges to zero as long as $M=O(N^p)$ and $N$ tends to infinity
%\rightarrow\infty$
for some constant power $p$.

In the numerical experiments, we implement DABG method on a series of problems to validate its performance. First, for the low-dimensional heat equation, DABG obtains numerical solutions with errors as small as $O(10^{-8})$, which is much more accurate than the compared deep least-squares method. To the best of our knowledge, in the experiments of recently published literature, DNN-based methods can only achieve the best error $O(10^{-4})$. Furthermore, for the high-dimensional problems with oscillatory solutions, DABG can still preserve high accuracy while deep least-squares method
performs quite worse. Besides, an example of high-dimensional Allen-Cahn equation is
considered and solved by DABG, implying the proposed method can be generalized for nonlinear equations. The performance of the proposed method is quite well.

\subsection{Organization}
This paper is organized as follows. In Section 2, we review Galerkin method with orthogonal polynomial basis for the first and second-order ordinary differential equations. In Section 3, we introduce the adaptive basis structure, followed by the DABG method for parabolic equations is presented. We also conduct error estimates for the proposed method. In Section 4, we derive the similar DABG method for hyperbolic equations as well as the corresponding error estimates. Several numerical examples are demonstrated in Section 5 to test the performance of the proposed method. Conclusions and a discussion about further research works are provided in Section 6.

\section{Preliminaries}
In this section, we mainly introduce the Galerkin method for first and second-order ordinary differential equations, which provides a foundation for the DABG method discussed in following sections. We specifically use orthogonal polynomials to construct the basis functions because of the high accuracy of the approximation and the sparsity of the derived linear system. Such method is also referred as spectral-Galerkin method. We remark that other types of basis functions can be also applied similarly, such as the finite element method \cite{Ern2004}. Throughout this paper, we use $(\cdot,\cdot)$ to denote the $L^2$-inner product over $[0,T]$, namely, $(p,q)=\int_0^Tp(t)q(t)\td t$.

\subsection{Orthogonal polynomials}
We first introduce the orthogonal polynomials, which will be adopted to construct the basis functions in the following Galerkin method. Let $I:=[a,b]\subset\mathbb{R}$ be an interval and $N$ be a positive integer, we use $P^N(I)$ to denote the space of all polynomials in $I$ of the degree no greater than $N$. We also define
\begin{equation}
P^N_{0^-}(I):=\left\{p\in P^N(I),~p(a)=0\right\},\quad P^N_{0^+}(I):=\left\{p\in P^N(I),~p(b)=0\right\}
\end{equation}
as subspaces of $P^N(I)$.

In the case $I=[-1,1]$, Legendre polynomials, which are given by
\begin{equation}
L_0(z)=1,\quad L_1(z)=z,\quad L_{n+1}(z)=\frac{2n+1}{n+1}zL_n(z)-nL_{n-1}(z)~\text{for}~1\leq n
3\leq N-1,
\end{equation}
form an orthogonal basis for $P^N([-1,1])$ in the sense that
\begin{equation}\label{09}
\int_{-1}^1L_n(z)L_m(z)=\frac{2}{2n+1}\delta_{mn},\quad\forall m,n\geq0.
\end{equation}
Also, we have the boundary values
\begin{equation}\label{07}
L_n(\pm1)=(\pm1)^n,\quad L_n'(\pm1)=\frac{1}{2}(\pm1)^{n-1}n(n+1),\quad\forall n\geq0.
\end{equation}

\subsection{Spectral-Galerkin methods for 1-D problems}\label{sec_method_1D}
For completeness, we present the spectral-Galerkin methods by solving the following 1-D model problems:
\begin{itemize}
  \item first-order equation\begin{equation}\label{04}\begin{cases}u'(t)+\alpha u(t)=f(t),\quad t\in(0,T],\\u(0)=0;\end{cases}\end{equation}
  \item second-order equation\begin{equation}\label{05}\begin{cases}u''(t)+\alpha u(t)=f(t),\quad t\in(0,T],\\u(0)=0,~u'(0)=g_0,\end{cases}\end{equation}
\end{itemize}
where $\alpha\in\mathbb{R}$ is a given constant.

\subsubsection{First-order equation}\label{sec_first_order_equation}
Firstly, let us consider the first-order equation \eqref{04}. Due to the initial condition $u(0)=0$, we let $X^{(1)}:=\{u\in H^1([0,T]),~u(0)=0\}$ be the trial space, where $H^1([0,T])$ is the first-order Sobolev space in $[0,T]$. Also, we let the space $Y^{(1)}:=L^2([0,T])$ be the test space. Multiplying $v\in Y$ on both sides of \eqref{04} leads to the variational formulation,
\begin{equation}
\begin{cases}
\text{Find}~u\in X^{(1)}~\text{such that}\\
(u',v)+\alpha(u,v)=(f,v),~\forall v\in Y^{(1)}.
\end{cases}
\end{equation}
In the sense of approximation, we let $X_N^{(1)}:=P^N_{0^-}([0,T])\subset X^{(1)}$ be the trial space and $Y_N^{(1)}:=P^{N-1}([0,T])\subset Y^{(1)}$ be the test space (note that $\dim(X_N^{(1)})=\dim(Y_N^{(1)})$), then the Galerkin formulation is given by
 \begin{equation}\label{06}
\begin{cases}
\text{Find}~u_N\in X_N^{(1)}~\text{such that}\\
(u_N',v_N)+\alpha(u_N,v_N)=(f,v_N),~\forall v_N\in Y_N^{(1)}.
\end{cases}
\end{equation}
Note that in \eqref{06} the test space is not the same as the trial space, it is actually a Petrov-Galerkin formulation. Although the formulation \eqref{06} can be solved by various methods such as finite element method, we proposed the spectral method with orthogonal polynomial-type basis which has higher accuracy for smooth solutions. Specifically, define
\begin{equation}\label{55}
\phi_n^{(1)}(t):=L_n(\frac{2t}{T}-1)+L_{n-1}(\frac{2t}{T}-1),\quad\forall n\geq1.
\end{equation}
Using the property \eqref{07}, it can be verified that $\{\phi_n^{(1)}\}_{n=1}^N$ form a basis for $X_N^{(1)}$. On the other hand, define
\begin{equation}
~\psi_n^{(1)}(t):=\begin{cases}L_0(\frac{2t}{T}-1),\quad n=1,\\L_{n-1}(\frac{2t}{T}-1)-L_{n-2}(\frac{2t}{T}-1),\quad n\geq2,\end{cases}
\end{equation}
then it can be verified that $Y_N^{(1)}=\text{span}\{1\}\oplus P^{N-1}_{0^+}([0,T])=\text{span}\{\psi_1^{(1)}\}\oplus\text{span}\{\psi_n^{(1)}\}_{n=2}^N$, so $\{\psi_n^{(1)}\}_{n=1}^N$ forms a basis for $Y_N^{(1)}$. The special design of basis functions contributes to the sparsity of the derived linear system (see Proposition \ref{prop01}).

Next, we let
\begin{equation}\label{10}
u_N(t)=\sum_{n=1}^N\tdu_n\phi_n^{(1)}(t)\in X_N^{(1)}
\end{equation}
be the approximate solution with undetermined coefficients $\tdu_n$. Taking $v_N=\psi_j^{(1)}$ for $j=1,\cdots,N$ in \eqref{06} leads to the following linear system
\begin{equation}\label{08}
\sum_{n=1}^N\left(a_{jn}^{(1)}+\alpha b_{jn}^{(1)}\right)\tdu_n=(f,\psi_j^{(1)}),\quad j=1,\cdots,N,
\end{equation}
where
\begin{equation}\label{18}
a_{jn}^{(1)}:=((\phi_n^{(1)})',\psi_j^{(1)}),\quad b_{jn}^{(1)}:=(\phi_n^{(1)},\psi_j^{(1)}).
\end{equation}
The matrices in \eqref{08} have band structures due to the following result.

\begin{prop}\label{prop01}
It satisfies
\begin{equation}
a_{jn}^{(1)}=\begin{cases}2, \ \ \quad j=1,\\-2,\quad j=n+1\geq2,\\0, \ \ \quad\text{\rm otherwise},\end{cases}\quad
b_{jn}^{(1)}=\begin{cases}\frac{T}{2n-1},\quad \quad \quad \quad \quad j=n,\\-\frac{2T}{(2n-1)(2n+1)},\quad j=n+1,\\-\frac{T}{2n+1},
\quad \quad \quad \quad j=n+2,\\0,\quad \quad \quad \quad \quad \quad\text{\rm otherwise}.\end{cases}
\end{equation}
\end{prop}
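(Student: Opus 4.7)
The plan is a direct computation based on (i) the change of variable $z = 2t/T - 1$ mapping $[0,T]$ to $[-1,1]$ with Jacobian $dt = (T/2)\,dz$, (ii) the orthogonality relation \eqref{09}, and (iii) the boundary values \eqref{07} together with a single integration by parts. All the arithmetic reduces to elementary Legendre identities, and the sparsity emerges as a consequence of the telescoping design of the basis.

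For the mass entries $b_{jn}^{(1)}$ with $j\ge 2$, I would expand $\phi_n^{(1)} = L_n(z)+L_{n-1}(z)$ and $\psi_j^{(1)} = L_{j-1}(z)-L_{j-2}(z)$, extract the Jacobian $T/2$, and apply \eqref{09} to the four cross terms. At most one of the four resulting Kronecker deltas survives for any given $(j,n)$, and collecting them yields precisely the three nonzero cases $j\in\{n,\,n+1,\,n+2\}$ with the stated coefficients. The corner entry $j=1$ uses $\psi_1^{(1)}=L_0$, which reduces $b_{1n}^{(1)}$ to $\int_0^T(L_n+L_{n-1})\,dt$; this vanishes for $n\ge 2$ by orthogonality against $L_0$, and at $n=1$ gives $T$, consistent with the $j=n$ formula.

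For the stiffness entries $a_{jn}^{(1)}$, the boundary values $\phi_n^{(1)}(0)=(-1)^n+(-1)^{n-1}=0$ and $\psi_j^{(1)}(T)=1-1=0$ (for $j\ge 2$) follow from \eqref{07} and kill the boundary term in integration by parts, so $a_{jn}^{(1)} = -(\phi_n^{(1)},(\psi_j^{(1)})')$ when $j\ge 2$. The case $j=1$ is immediate: since $\psi_1^{(1)}$ is constant, $a_{1n}^{(1)} = \phi_n^{(1)}(T)-\phi_n^{(1)}(0) = L_n(1)+L_{n-1}(1) = 2$. The key identity for $j\ge 2$ is that $\int_{-1}^1 L_m(z)L_k'(z)\,dz$ equals $2$ whenever $k>m$ with $k-m$ odd, and zero otherwise; this follows by expanding $L_k'$ via the derivative identity $(2k+1)L_k = L_{k+1}'-L_{k-1}'$ and using \eqref{09}. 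Applying it to the four cross terms of $\int_{-1}^1(L_n+L_{n-1})(L_{j-1}'-L_{j-2}')\,dz$ triggers a parity-driven cancellation: all contributions for $j\ge n+2$ annihilate pairwise and only the term $j=n+1$ survives with value $2$; the Jacobian factors $2/T$ and $T/2$ cancel, producing $a_{jn}^{(1)}=-2$ at $j=n+1$ and $0$ elsewhere.

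The only real obstacle is careful bookkeeping: one must track which parity of $j-n$ activates each Legendre derivative integral, verify the boundary vanishing separately for $\phi_n^{(1)}$ at $t=0$ and $\psi_j^{(1)}$ at $t=T$, and handle the corner index $j=1$ as a distinct subcase. There is no substantive analytic content; the band structure is, by design, baked into the telescoping choices $\phi_n^{(1)}=L_n+L_{n-1}$ and $\psi_j^{(1)}=L_{j-1}-L_{j-2}$, which were engineered precisely so that these inner products reduce to a handful of nonzero Legendre integrals.
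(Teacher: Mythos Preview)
Your proposal is correct and follows essentially the same approach as the paper: change of variables to $[-1,1]$, Legendre orthogonality \eqref{09}, boundary values \eqref{07}, and integration by parts, with the corner case $j=1$ handled separately. The only cosmetic difference is that the paper splits the computation of $a_{jn}^{(1)}$ for $j\ge 2$ into the three subcases $n<j-1$, $n=j-1$, $n>j-1$ (using degree arguments directly in the first case and integration by parts in the last), whereas you integrate by parts uniformly and invoke the parity identity $\int_{-1}^1 L_m L_k'\,dz = 2\cdot\mathbf{1}_{\{k>m,\ k-m\text{ odd}\}}$; both routes are the same computation organized slightly differently.
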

\begin{proof}
From the orthogonality \eqref{09}, note that $\int_{-1}^1L_n(z)p(z)\td z=0$ for all $p\in P^m([-1,1])$ if $m<n$. Let us first consider the integral $a_{jn}^{(1)}=\int_0^T(\phi_n^{(1)})'\psi_j^{(1)}\td t$. The property \eqref{07} and integration by parts will be used repeatedly.

In the case $j\geq2$, if $n<j-1$, by the transformation $t=T(z+1)/2$,
\begin{equation}
\int_0^T(\phi_n^{(1)})'\psi_j^{(1)}\td t=\int_{-1}^1(L_n'(z)+L_{n-1}'(z))(L_{j-1}(z)-L_{j-2}(z))\td z=0
\end{equation}
since $L_n'(z)+L_{n-1}'(z)\in P^{n-1}([-1,1])$. Similarly, if $n=j-1$,
\begin{multline}
\int_0^T(\phi_{j-1}^{(1)})'\psi_n^{(1)}\td t=\int_{-1}^1(L_{j-1}'(z)+L_{j-2}'(z))(L_{j-1}(z)-L_{j-2}(z))\td z\\
=-\int_{-1}^1L_{j-1}'(z)L_{j-2}(z)\td z=\int_{-1}^1L_{j-1}(z)L_{j-2}'(z)\td z-L_{j-1}(1)L_{j-2}(1)+L_{j-1}(-1)L_{j-2}(-1)=-2;
\end{multline}
if $n>j-1$,
\begin{multline}
\int_0^T(\phi_n^{(1)})'\psi_j^{(1)}\td t=-\int_0^T\phi_n^{(1)}(\psi_j^{(1)})'\td t+\phi_n^{(1)}(T)\psi_j^{(1)}(T)-\phi_n^{(1)}(0)\psi_j^{(1)}(0)\\
=-\int_0^T\phi_n^{(1)}(\psi_j^{(1)})'\td t=\int_{-1}^1(L_{n-1}(z)+L_n(z))(L_{j-1}'(z)-L_j'(z))\td z=0.
\end{multline}

In the case $j=1$,
\begin{equation}
\int_0^T(\phi_n^{(1)})'\psi_1^{(1)}\td t=\int_{-1}^1L_n'(z)+L_{n-1}'(z)\td z=L_n(1)+L_{n-1}(1)-L_n(-1)-L_{n-1}(-1)=2.
\end{equation}

The result for $b_{jn}^{(1)}$ can be proved by similar argument.
\end{proof}

Proposition \ref{prop01} implies that the matrices
\begin{equation}
A_N^{(1)}:=\left[a_{jn}^{(1)}\right]_{j=1,\cdots,N}^{n=1,\cdots,N}~\text{and}~B_N^{(1)}:=\left[b_{jn}^{(1)}\right]_{j=1,\cdots,N}^{n=1,\cdots,N}
\end{equation}
are sparse with $O(N)$ nonzero entries. The band structures of the two matrices are shown in Figure \ref{Fig_matrix_1}.

\begin{figure}
\centering
\includegraphics[scale=0.45]{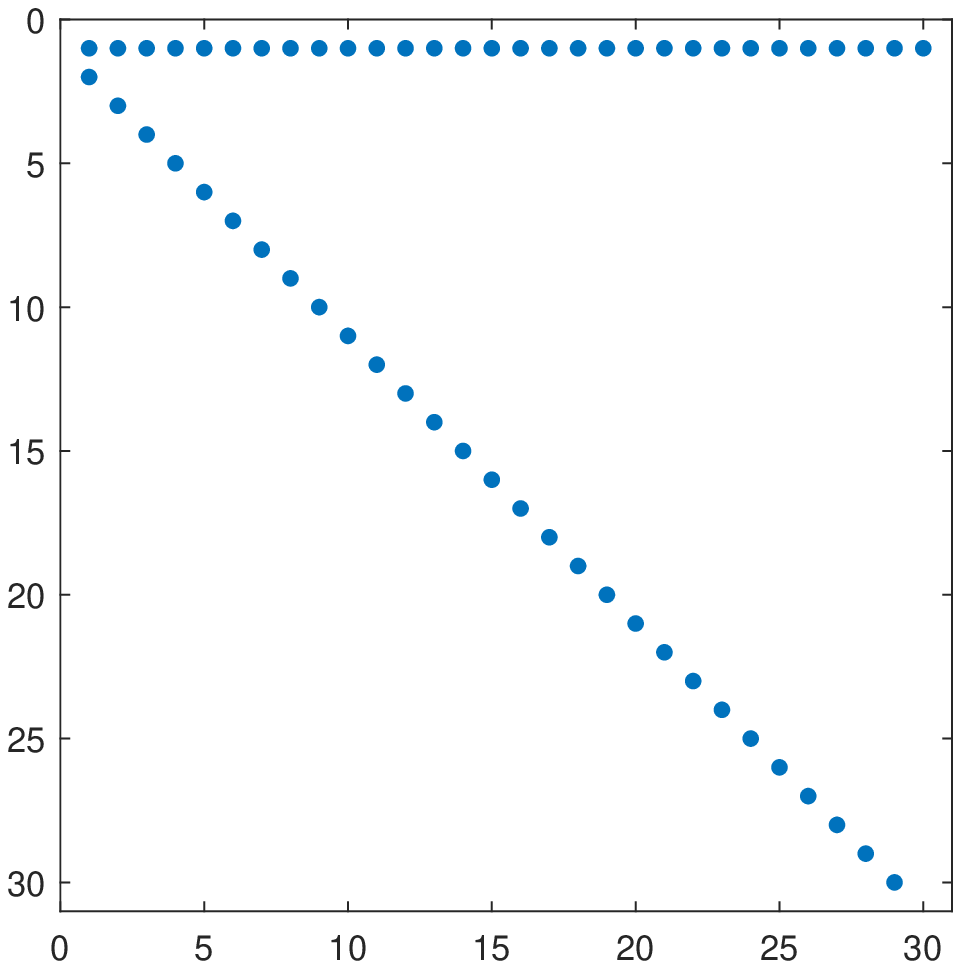}
\includegraphics[scale=0.45]{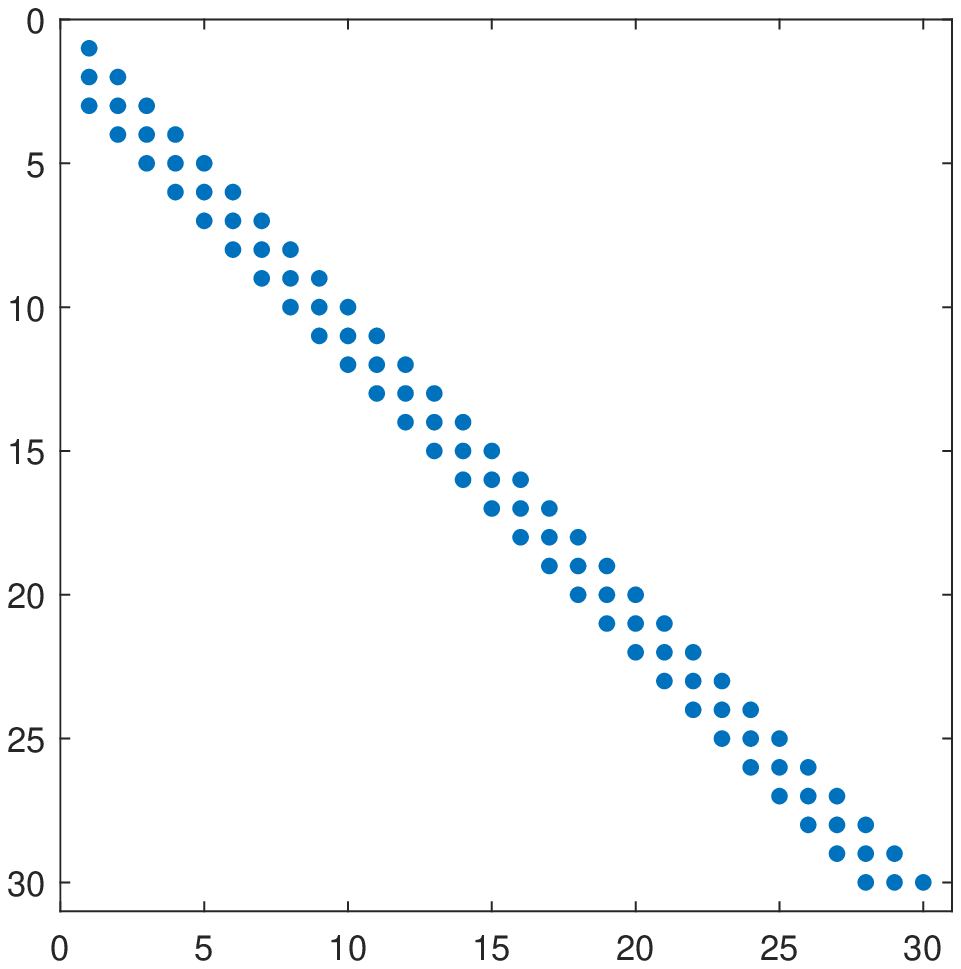}
\caption{\em Band structures of the matrices $A_N^{(1)}$ (left) and $B_N^{(1)}$ (right) when $N=30$.}
\label{Fig_matrix_1}
\end{figure}

\subsubsection{Second-order equation}\label{sec_second_order_equation}
Next, we briefly present the variational and Galerkin formulations for the second-order equation \eqref{05} derived by similar argument as above. Define the trial and test spaces
\begin{equation}
X^{(2)}:=\{u\in H^1([0,T]),~u(0)=0\},\quad Y^{(2)}:=\{v\in H^1([0,T]),~v(T)=0\},
\end{equation}
and the approximate spaces
\begin{equation}
X_N^{(2)}:=P^N_{0^-}([0,T])\subset X^{(2)},\quad Y_N^{(2)}:=P^N_{0^+}([0,T])\subset Y^{(2)}.
\end{equation}

It can be noted that $X^{(2)}$ and $X_N^{(2)}$ are identical to $X^{(1)}$ and $X_N^{(1)}$, respectively, although we use the label ``(1)" and ``(2)" to distinguish them in the orders of the equation. However, different from $Y^{(1)}=L^2([0,T])$ in the first-order case, $Y^{(2)}$ is taken as the $H^1$ space with homogeneous Dirichlet condition on the right end point (same for $Y_N^{(2)}$).

We remark that even though the essential condition $u(0)=0$ is enforced in the trial space, the natural condition $u'(0)=g_0$ is not enforced explicitly and will be absorbed in the bilinear form. By multiplying $v\in Y^{(2)}$ on both sides of \eqref{05} and using integration by parts, the variational formulation for \eqref{05} is can be derived as
\begin{equation}
\begin{cases}
\text{Find}~u\in X^{(2)}~\text{such that}\\
(u',v')-\alpha (u,v)=-(f,v)-g_0v(0),~\forall v\in Y^{(2)},
\end{cases}
\end{equation}
and its Galerkin formulation is given by
 \begin{equation}\label{13}
\begin{cases}
\text{Find}~u_N\in X_N^{(2)}~\text{such that}\\
(u_N',v_N')-\alpha(u_N,v_N)=-(f,v_N)-g_0v_N(0),~\forall v_N\in Y_N^{(2)}.
\end{cases}
\end{equation}

Moreover, define
\begin{equation}
\phi_n^{(2)}(t):=\begin{cases}L_{1}(\frac{2t}{T}-1)+L_{0}(\frac{2t}{T}-1),\quad n=1,\\(1-\frac{1}{n})L_{n}(\frac{2t}{T}-1)+(2-\frac{1}{n})L_{n-1}(\frac{2t}{T}-1)+L_{n-2}(\frac{2t}{T}-1),\quad n\geq2,\end{cases}
\end{equation}
and
\begin{equation}\label{45}
\psi_n^{(2)}(t):=\begin{cases}L_{1}(\frac{2t}{T}-1)-L_{0}(\frac{2t}{T}-1),\quad n=1,\\(1-\frac{1}{n})L_{n}(\frac{2t}{T}-1)-(2-\frac{1}{n})L_{n-1}(\frac{2t}{T}-1)+L_{n-2}(\frac{2t}{T}-1),\quad n\geq2.\end{cases}
\end{equation}
Using the property \eqref{07}, it can be verified that $\{\phi_n^{(2)}\}_{n=1}^N$ and $\{\psi_n^{(2)}\}_{n=1}^N$ form a basis for $X_N^{(2)}$ and $Y_N^{(2)}$, respectively.

Finally, the approximate solution $u_N=\sum_{n=1}^N\tdu_n\phi_n^{(2)}$ can be determined by solving
\begin{equation}\label{12}
\sum_{n=1}^N\left(a_{jn}^{(2)}-\alpha b_{jn}^{(2)}\right)\tdu_n=-(f,\psi_j^{(2)})-g_0\psi_j^{(2)}(0),\quad j=1,\cdots,N,
\end{equation}
where
\begin{equation}\label{20}
a_{jn}^{(2)}:=((\phi_n^{(2)})',(\psi_j^{(2)})'),\quad b_{jn}^{(2)}:=(\phi_n^{(2)},\psi_j^{(2)}).
\end{equation}

By the fact that $\partial_t\phi_n^{(2)}(0)=\partial_t\psi_n^{(2)}(T)=0$ if $n\geq2$ and a similar proof as in Proposition \ref{prop01}, we can deduce the following result which implies that the matrix in \eqref{12} has band structures.
\begin{prop}\label{prop02}
It satisfies
\begin{equation}
a_{jn}^{(2)}=
\left \{
\begin{array}{ll}
\frac{4(2n-1)}{Tn}, & \ j=1,\\
-\frac{4(2n-1)}{Tn}, & \ j\geq2,n=1,\\
-\frac{4(n-1)(2n-1)}{Tn}, & \ j=n\geq2,\\
0, & \ \text{\rm otherwise},
\end{array}
\right.
\quad
b_{jn}^{(2)}=
\left \{
\begin{array}{ll}
-\frac{2}{3}, & \ j=n=1, \\
-\frac{2T(2n-1)(n^2-n-3)}{n^2(2n-3)(2n+1)}, & \ j=n\geq2,\\
\frac{1}{2}, & \ j=n+1=2, \\
\frac{2T}{n(n+1)}, & \ j=n+1\geq3,\\
\frac{1}{3}, & \ j=n+2=3, \\
\frac{T(n-1)}{n(2n+1)}, & \ j=n+2\geq4,\\
-b_{nj}^{(2)}, & \ j=n-1,\\
b_{nj}^{(2)}, & \ j=n-2,\\
0, & \ \text{\rm otherwise}.
\end{array}
\right.
\end{equation}
\end{prop}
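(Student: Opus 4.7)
The plan is to mirror the strategy of Proposition \ref{prop01}: pass to the reference interval via $z=\frac{2t}{T}-1$, then exploit the orthogonality \eqref{09} together with integration by parts and the boundary values \eqref{07}. The crucial structural observation is that, for $n\ge 2$, the function $\phi_n^{(2)}$ is a linear combination of $L_{n-2},L_{n-1},L_n$ (evaluated at $z(t)$), and similarly $\psi_j^{(2)}$ is a combination of $L_{j-2},L_{j-1},L_j$ up to sign on the middle term. Thus both $b_{jn}^{(2)}$ and (after one integration by parts) $a_{jn}^{(2)}$ are $L^2$ pairings of such triples, which by \eqref{09} can only be nonzero when the two triples of consecutive indices overlap, i.e.\ when $|n-j|\le 2$. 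This immediately yields the pentadiagonal sparsity pattern.

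For $b_{jn}^{(2)}$ I would simply substitute the expansions of $\phi_n^{(2)},\psi_j^{(2)}$, distribute, and apply \eqref{09} entry by entry along the five diagonals $j-n\in\{-2,-1,0,1,2\}$, keeping in mind that the sign of the $L_{n-1}$ term differs between $\phi_n^{(2)}$ and $\psi_n^{(2)}$. The coefficients $(1-\tfrac{1}{n}),(2-\tfrac{1}{n}),1$ are engineered so that the three contributions collapse into the compact closed forms stated; the computation is long but entirely mechanical, and reduces to summing $\frac{T}{2k+1}\delta$-type contributions with rational prefactors. The rows/columns indexed by $1,2$ have to be treated separately because $\phi_1^{(2)},\psi_1^{(2)}$ only contain $L_0,L_1$, which explains the exceptional values $-\tfrac{2}{3},\tfrac{1}{2},\tfrac{1}{3}$.

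For $a_{jn}^{(2)}=\bigl((\phi_n^{(2)})',(\psi_j^{(2)})'\bigr)$ I would integrate by parts once, using the hint that $\partial_t\phi_n^{(2)}(0)=0$ for $n\ge 2$ and $\partial_t\psi_n^{(2)}(T)=0$ for $n\ge 2$, together with $\phi_n^{(2)}(0)=0$ and $\psi_j^{(2)}(T)=0$, so that the boundary terms either vanish or collapse to explicit evaluations at the endpoints where $L_k(\pm 1)=(\pm 1)^k$ by \eqref{07}. After the integration by parts one obtains an integral of the form $(\phi_n^{(2)},(\psi_j^{(2)})'')$ or similar, where $(\psi_j^{(2)})''\in P^{j-2}$; combined with orthogonality this should force all off-diagonal entries ($j\ne n$, $j\ne 1$) to vanish, leaving only the diagonal values and the distinguished first row. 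The diagonal entries $-\frac{4(n-1)(2n-1)}{Tn}$ are then extracted by computing the single surviving Legendre pairing, remembering the Jacobian: the time derivative contributes a factor $2/T$ for each differentiation, while $dt=\tfrac{T}{2}\,dz$.

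The main obstacle will be bookkeeping rather than conceptual: the asymmetric cases $j=1$ or $n=1$ (where $\phi_1^{(2)},\psi_1^{(2)}$ lose the $L_{n-2}$ term) and the near-boundary cases $n=2$ or $j=2$ break the uniformity and produce the exceptional entries $\tfrac{4(2n-1)}{Tn},-\tfrac{4(2n-1)}{Tn}$ and the $\tfrac{1}{2},\tfrac{1}{3}$ values in $b_{jn}^{(2)}$; each such case must be verified directly against the special definitions rather than from the generic formula. A useful sanity check is the relation $b_{jn}^{(2)}=-b_{nj}^{(2)}$ (for $j=n-1$) and $b_{jn}^{(2)}=b_{nj}^{(2)}$ (for $j=n-2$) stated in the proposition, which reflects the sign symmetry between $\phi$ and $\psi$ and lets one cut the casework roughly in half.
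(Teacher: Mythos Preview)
Your plan matches the paper's own argument, which does not spell out the computation but simply invokes the derivative conditions $\partial_t\phi_n^{(2)}(0)=\partial_t\psi_n^{(2)}(T)=0$ for $n\ge 2$ and then refers back to the proof of Proposition~\ref{prop01}. One small slip: your integration-by-parts paragraph asserts that only the diagonal and first \emph{row} of $A_N^{(2)}$ survive, whereas the first \emph{column} ($n=1$, $j\ge 2$) is also nonzero---but you correctly flag the $n=1$ exceptional case in your closing paragraph, so the overall plan is sound.
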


Clearly, the matrices
\begin{equation}
A_N^{(2)}:=\left[a_{jn}^{(2)}\right]_{j=1,\cdots,N}^{n=1,\cdots,N}~\text{and}~B_N^{(2)}:=\left[b_{jn}^{(2)}\right]_{j=1,\cdots,N}^{n=1,\cdots,N}
\end{equation}
are sparse with $O(N)$ nonzero entries (see Figure \ref{Fig_matrix_2}).

\begin{figure}
\centering
\includegraphics[scale=0.45]{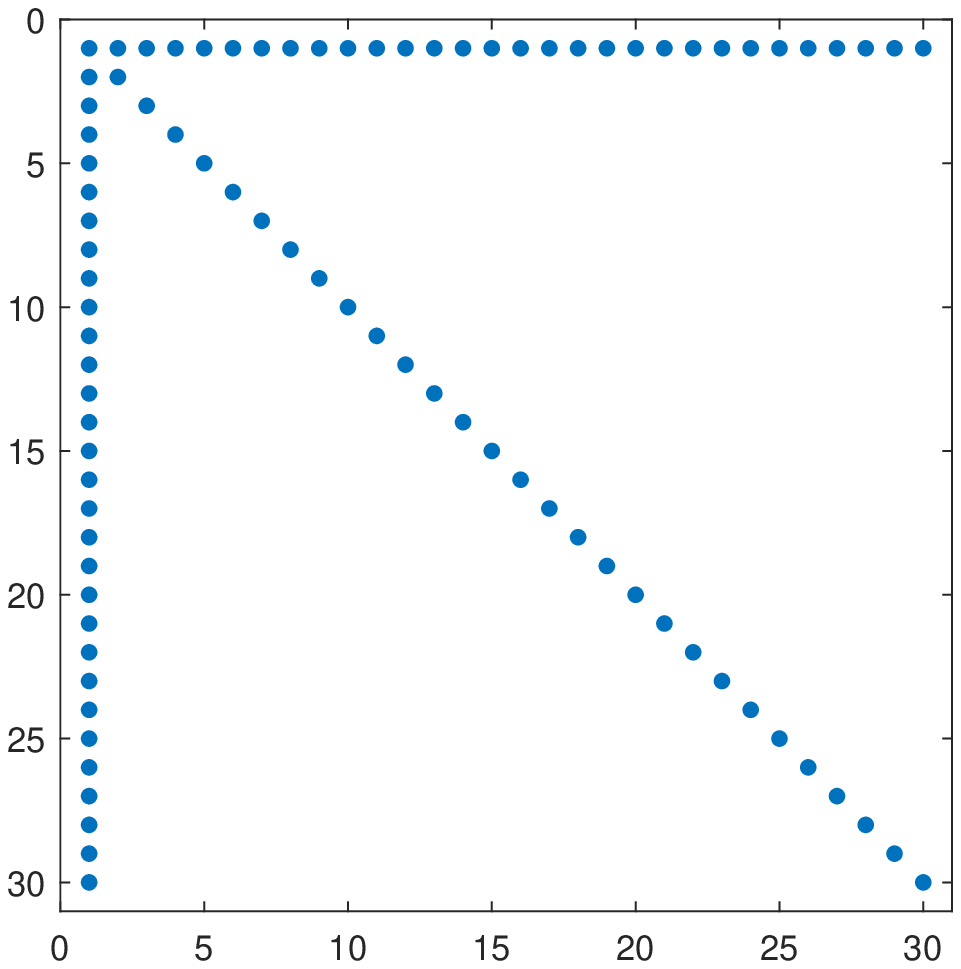}
\includegraphics[scale=0.45]{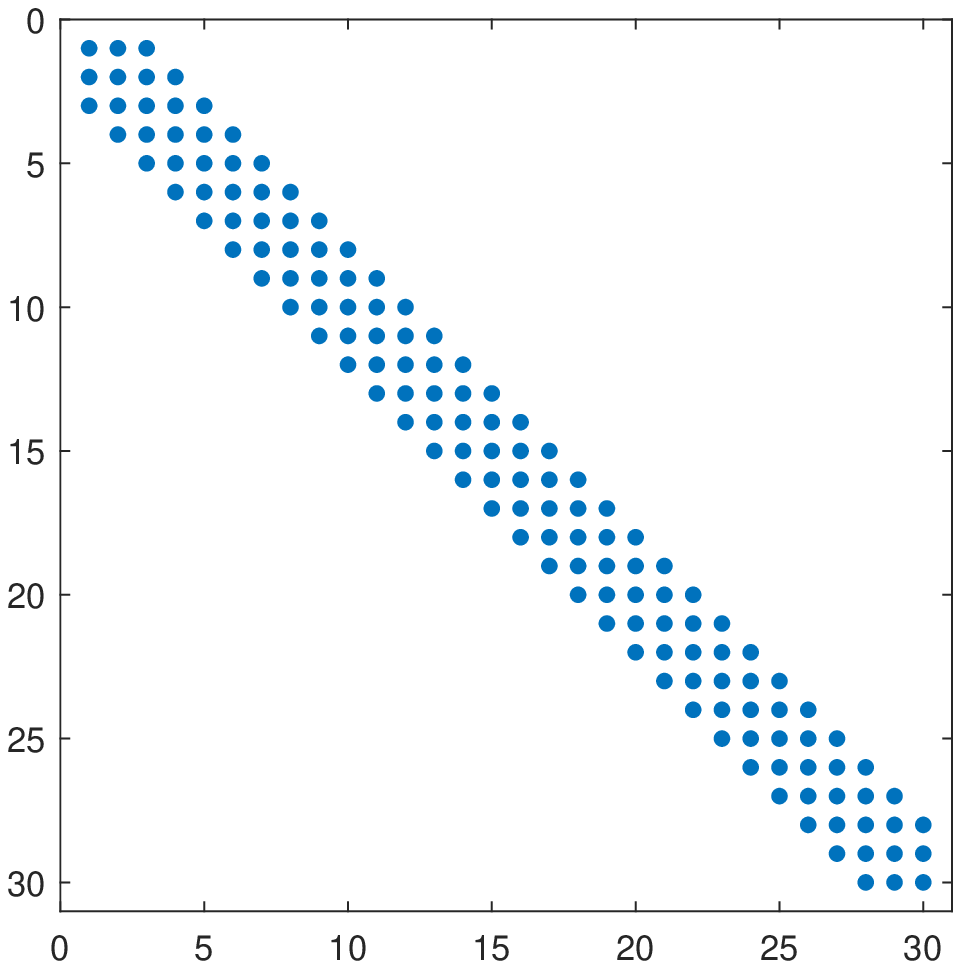}
\caption{\em Band structures of the matrices $A_N^{(2)}$ (left) and $B_N^{(2)}$ (right) when $N=30$.}
\label{Fig_matrix_2}
\end{figure}

\section{Parabolic equations}\label{sec_method_parabolic}
In section, we present the deep adaptive basis Galerkin method for linear parabolic equation \eqref{01} in detail. For simplicity, the inhomogeneous initial and boundary conditions in \eqref{01} can be converted to homogeneous ones by lifting techniques. Specifically, we can construct a smooth function $v(x,t)$ such that $v(x,0)=g(x)$ in $\Omega$ and $v=h$ on $\partial\Omega\times(0,T)$, then the function $w:=u-v$ satisfies $\frac{\partial w}{\partial t}+\mathcal{L}_xw=f-\frac{\partial v}{\partial t}-\mathcal{L}_xv$ in $\Omega\times(0,T)$ with homogeneous conditions. Therefore, without loss of generality, we consider the following equation,
\begin{equation}\label{02}
\begin{cases}
\frac{\partial u(x,t)}{\partial t}+\mathcal{L}_xu(x,t)=f(x,t),\quad\text{in}~\Omega\times(0,T],\\
u(x,0) = 0,\quad\text{in}~\Omega,\\
u(x,t) = 0,\quad\text{on}~\partial\Omega\times[0,T].
\end{cases}
\end{equation}
For readers' convenience, we use the notation $\hat{\cdot}$ to denote functions which involve neural networks.

\subsection{Fully-connected neural network}
We first introduce the fully connected neural network (FNN), which is one of the most widely used neural networks and will be adopted to construct the following adaptive basis. Mathematically speaking, let $\sigma(x)$ be an activation function which is applied entry-wise to a vector $x$ to obtain another vector of the same size. Let $L\in\mathbb{N}^+$ and $M_\ell\in\mathbb{N}^+$ for $\ell=1,\dots,L-1$, an FNN $\hphi$ is the composition of $L-1$ simple nonlinear functions given by
\begin{equation}\label{36}
\hat{\phi}(x;\theta):=a^T h_{L-1} \circ h_{L-2} \circ \cdots \circ h_{1}(x)\quad \text{for } x\in\mathbb{R}^d,
\end{equation}
where $a\in \mathbb{R}^{M_{L-1}}$; $h_{\ell}(x_{\ell}):=\sigma\left(W_\ell x_{\ell} + b_\ell \right)$ with $W_\ell \in \mathbb{R}^{M_{\ell}\times M_{\ell-1}}$ ($M_0:=d$) and $b_\ell \in \mathbb{R}^{M_\ell}$ for $\ell=1,\dots,L-1$. Here $M_\ell$ is called the width of the $\ell$-th layer and $L$ is called the depth of the FNN. $\theta:=\{a,\,W_\ell,\,b_\ell:1\leq \ell\leq L-1\}$ is the set of parameters determining the underlying neural network $\hphi$. Popular activation functions in solving PDEs include the sigmoidal function $(1+e^{-x})^{-1}$ and $k$-th power of rectified linear unit (ReLU$^k$) $\max\{0,x^k\}$ for $k\in\mathbb{N}^+$.

Denote $M:=\max~\{M_\ell,~1\leq\ell\leq L-1\}$, then the number of free parameters $|\theta|=O(M^2L+Md)$. Comparatively, the degree of freedom of linear structures such as finite elements and tensor product polynomials increases exponentially with $d$. Hence FNNs are more practicable in high-dimensional approximations. Given an activation function, we consider the architecture with fixed width $M=M_\ell$ for all $\ell$ and denote $\mathcal{F}_{L,M}$ as the set consisting of all FNNs with depth $L$ and width $M$.

\subsection{Adaptive basis strucutres}
Recall for 1-D problems in Section \ref{sec_method_1D}, we use the expansion \eqref{10} for approximation. Based on that, for multi-dimensional parabolic equations, we replace the scalar coefficients $\tdu_n$ by space-dependent basis functions. Considering the spatial dimension $d$ might be moderately large, these basis can be constructed based on FNNs, thanks to their capability of high dimensions. Specifically, we aim to look for approximate solutions of the following form
\begin{equation}\label{11}
\hu_N(x,t)=\sum_{n=1}^N\hw_n(x)\phi_n^{(1)}(t),
\end{equation}
where $\hw_n:\mathbb{R}^d\rightarrow\mathbb{R}$ are functions belonging to a specified FNN-based class $\mathcal{S}$. Clearly, $\mathcal{S}$ should have regularity such that $\mathcal{L}_x\hw$ is meaningful for all $\hw\in\mathcal{S}$.

We expect the expansion \eqref{11} to satisfy the homogeneous boundary condition $\hu_N(x,t) = 0$ on $\partial\Omega\times(0,T)$. A convenient way is to enforce the boundary condition on $\hw_n$. That is, we expect that $\hw_n(x)=0$ on $\partial\Omega$ for all $n$. By this setting, the boundary condition is automatically satisfied by $\hu_N$, and we do not have to deal with the boundary condition explicitly in the formulation. Consequently, we require that
\begin{equation}\label{14}
\hw(x)=0~\text{on}~\partial\Omega,\quad\forall \hw\in\mathcal{S}.
\end{equation}

If we explicitly formulate $\mathcal{S}$, we need to ensure \eqref{14} is fulfilled. It usually depends on the specific shapes of $\Omega$. On the other hand, we need to ensure that the functions in $\mathcal{S}$ have FNN structures fundamentally. Therefore, we design $\mathcal{S}$ as follows,
\begin{equation}
\mathcal{S}=\left\{\hw: \hw(x)=\nu(x)\hphi(x),~\hphi\in\mathcal{F}_{L,M}\right\},
\end{equation}
where $\nu$ is a prescribed smooth function governing the boundary condition, namely, $\nu|_{\partial\Omega}=0$. Possible choices of $\mathcal{S}$ for various types of $\Omega$ are listed in Table \ref{Tab_S}. In Section \ref{sec_a_priori_estimate}, we will show if $\Omega$ is smooth, there is always a boundary governing function $\nu$ with the same smoothness (Lemma \ref{lem02}).
\begin{table}
\centering
\begin{tabular}{ccc}
\toprule[1pt]
  Domain Type & Formulation of $\Omega$ & $\mathcal{S}$ \\
\midrule
  $d$-box & $\prod_{i=1}^d(a_i,b_i)$ & $\{\hw:~\hw(x)=\prod_{i=1}^d(x_i-a_i)(x_i-b_i)\cdot\hphi,~\hphi\in\mathcal{F}_{L,M}\}$ \\
  $d$-ball & $\{x\in\mathbb{R}^d:~|x|<r\}$ & $\{\hw:~\hw(x)=(|x|^2-r^2)\hphi,~\hphi\in\mathcal{F}_{L,M}\}$ \\
  Interior of a level set & $\{x\in\mathbb{R}^d:~\rho(x)<a\}$ & $\{\hw:~\hw(x)=(\rho(x)-a)\hphi,~\hphi\in\mathcal{F}_{L,M}\}$ \\
  General Lipschitz & $\Omega\subset\mathbb{R}^d$,~$\partial\Omega$ is $C^{0,1}$ & $\{\hw:~\hw(x)=\text{dist}(x,\partial\Omega)\hphi,~\hphi\in\mathcal{F}_{L,M}\}$ \\
\bottomrule[1pt]
\end{tabular}
\caption{Possible choices of $\mathcal{S}$ for various types of the problem domain $\Omega$.}\label{Tab_S}
\end{table}

To sum up, we define the hypothesis space
\begin{equation}\label{15}
\mathcal{U}_N^\tP:=\left\{\hu_N(x,t)=\sum_{n=1}^N\hw_n(x)\phi_n^{(1)}(t),~\hw_n\in\mathcal{S}\right\},
\end{equation}
where the label ``P'' is short for the parabolic equation. We can formally regard $\mathcal{U}_N^\tP$ as a tensor-product class, i.e. $\mathcal{U}_N^\tP=\mathcal{S}\times X_N^{(1)}$. Since the ``basis functions" of the structure $\hu_N$ in \eqref{15} include neural networks, which are nonlinear structures with the free parameters, we can view $\hu_N$ as an adaptive basis structure. Moreover, all functions in $\mathcal{U}_N^\tP$ always satisfy the initial and boundary conditions in \eqref{02}. It suffices to look for approximate solutions in $\mathcal{U}_N^\tP$ that satisfy the differential equation.

We remark that even though we propose to take $N$ independent FNNs in the adaptive basis structure \eqref{15}, one can alternatively use a vector-valued FNN with $N$-dimensional output in the structure. More precisely, an $N$-dimensional vector-valued FNN $\hphi$ is defined as \eqref{36} by specifying $a\in\mathbb{R}^{M_{L-1}\times N}$, and the $n$-th component of $\hphi$ can be taken to build $\hw_n(x)$. It is clear that using one FNN with $N$-dimensional output is equivalent to using $N$ scalar FNNs sharing the same input/hidden layers $h_\ell$ but having different output layers $a$. Therefore, the hypothesis space constructed by one FNN with $N$-dimensional output is a subspace of the one with $N$ scalar FNNs. For a more general discussion, we choose to use the latter case in this paper.

\subsection{Galerkin formulation}
Following the method for the first-order problem in Section \ref{sec_first_order_equation}, we can formally derive the variational form for the parabolic problem \eqref{02}. Recall $X^{(1)}$, $Y^{(1)}$ and $Y_N^{(1)}$ are defined in Section \ref{sec_first_order_equation}, we define the trial space
\begin{equation}
\mathcal{U}^\tP:=\left\{u(x,t):u(x,\cdot)\in X^{(1)}~\forall x\in\Omega,~u|_{\partial\Omega\times[0,T]}=0\right\},
\end{equation}
then the variational form is given by
\begin{equation}
\begin{cases}
\text{Find}~u\in \mathcal{U}^\tP~\text{such that}\\
\left(\partial_tu(x,\cdot), v\right)+\left(\mathcal{L}_xu(x,\cdot), v\right)=\left(f(x,\cdot), v\right)~\forall v\in Y^{(1)},~\forall x\in\Omega.
\end{cases}
\end{equation}
Accordingly, the Galerkin formulation is given by
\begin{equation}\label{16}
\begin{cases}
\text{Find}~\hu_N\in \mathcal{U}_N^\tP~\text{such that}\\
\left(\partial_t\hu_N(x,\cdot), v_N\right)+\left(\mathcal{L}_x\hu_N(x,\cdot), v_N\right)=\left(f(x,\cdot), v_N\right)~\forall v_N\in Y_N^{(1)},~\forall x\in\Omega.
\end{cases}
\end{equation}
Taking $\hu_N$ as the expression \eqref{11} and $v_N=\psi_j^{(1)}$ for $j=1,\cdots,N$, we obtain
\begin{equation}\label{17}
\sum_{n=1}^N\left(a_{jn}^{(1)}\hw_n(x)+b_{jn}^{(1)}\mathcal{L}_x\hw_n(x)\right)=\left(f(x,\cdot),\psi_j^{(1)}\right),\quad j=1,\cdots,N,\quad\forall x\in\Omega,
\end{equation}
with unknowns $\{\hw_n\}\subset\mathcal{S}$, where $a_{jn}^{(1)}$ and $b_{jn}^{(1)}$ are defined in \eqref{18}. Note the equation in \eqref{17} holds for all $x\in\Omega$, we may not find a set of $\{\hw_n\}$ exactly making \eqref{17} satisfied. Therefore, in a practical way, we solve \eqref{17} by minimizing its residual in the least-squared sense,
\begin{equation}\label{22}
\underset{\hw_n\in\mathcal{S}}{\min}~J^{\tP}[\hw_1,\cdots,\hw_N]:=\sum_{j=1}^Nr_j\left\|R_j^{(1)}[\hw_1,\cdots,\hw_N]\right\|_{L^2(\Omega)}^2+\lambda N^{-4}\|\mathcal{L}_x\hw_N\|_{L^2(\Omega)}^2,
\end{equation}
where
\begin{equation}
R_j^{(1)}[\hw_1,\cdots,\hw_N]:=\sum_{n=1}^N\left(a_{jn}^{(1)}\hw_n(x)+b_{jn}^{(1)}\mathcal{L}_x\hw_n(x)\right)-\left(f(x,\cdot),\psi_j^{(1)}\right)
\end{equation}
is the residual of the $j$-th equation in \eqref{17};
\begin{equation}
r_j:=N^{-3}\sum_{k=j}^Nk(2k-1)
\end{equation}
between $O(N^{-1})$ and $O(1)$ is the weight of $R_j^{(1)}$; $\lambda>0$ is a given regularization parameter. Since the loss function is nonnegative, the optimization \eqref{22} always admits a solution. We remark that in \eqref{22} the weights $\{r_j\}$ and the regularization term $\lambda N^{-4}\|\mathcal{L}_x\hw_N\|_{L^2(\Omega)}^2$ are specifically chosen such that a decent a posterior estimate can be derived (see Section \ref{sec_a_posterior_estimate}). Also, by Proposition \ref{prop01}, the sum $\sum_{n=1}^N\left(a_{jn}^{(1)}\hw_n(x)+b_{jn}^{(1)}\mathcal{L}_x\hw_n(x)\right)$ in $R_j^{(1)}$ only includes at most 4 nonzero terms if $j\geq2$.

We remark that even though the regularization term $\lambda N^{-4}\|\mathcal{L}_x\hw_N\|_{L^2(\Omega)}^2$ is essential in the error analysis, it is less important in practice because it is much smaller than the residual part of the loss. More precisely, using \eqref{55} we obtain
\begin{equation}
\mathcal{L}_x\hu_N(x,t)=\mathcal{L}_x\hw_1(x)L_0(\frac{2t}{T}-1)+\sum_{n=1}^{N-1}(\mathcal{L}_x\hw_{n+1}(x)+\mathcal{L}_x\hw_n(x))L_n(\frac{2t}{T}-1)+\mathcal{L}_x\hw_N(x)L_N(\frac{2t}{T}-1);
\end{equation}
Namely, $\mathcal{L}_x\hw_N(x)$ is the coefficient associated with $L_N(\frac{2t}{T}-1)$ in the orthogonal expansion of $\mathcal{L}_x\hu_N(x,t)$ with the basis $\left\{L_n(\frac{2t}{T}-1)\right\}_{n=0}^N$. Due to the orthogonality, it holds that for each $x\in\Omega$,
\begin{equation}\label{61}
\mathcal{L}_x\hw_N(x)=\frac{\int_0^T\mathcal{L}_x\hu_N(x,t)\cdot L_N(\frac{2t}{T}-1)\td t}{\int_0^T\left|L_N(\frac{2t}{T}-1)\right|^2\td t}=\frac{2N+1}{T}\int_0^T\mathcal{L}_x\hu_N(x,t)\cdot L_N(\frac{2t}{T}-1)\td t.
\end{equation}
Using the identity of Legendre polynomials
\begin{equation}\label{62}
L_n(x)=(2n+1)^{-1}(L_{n+1}'(x)-L_{n-1}'(x))
\end{equation}
and integration by parts, we further obtain
\begin{multline}\label{60}
\int_0^T\mathcal{L}_x\hu_N(x,t)\cdot L_N(\frac{2t}{T}-1)\td t\\
=-\frac{T}{4N+2}\left(\int_0^T\mathcal{L}_x\partial_t\hu_N(x,t)L_{N+1}(\frac{2t}{T}-1)\td t - \int_0^T\mathcal{L}_x\partial_t\hu_N(x,t)L_{N-1}(\frac{2t}{T}-1)\td t\right).
\end{multline}
Since $|L_n(x)|\leq 1$ in $[-1,1]$ for all $n$, it follows \eqref{60}
\begin{multline}
\left|\int_0^T\mathcal{L}_x\hu_N(x,t)\cdot L_N(\frac{2t}{T}-1)\td t\right|\leq \frac{T}{2N+1}\left|\int_0^T\mathcal{L}_x\partial_t\hu_N(x,t)\td t\right|\\
\leq CN^{-1}\|\mathcal{L}_x\hu_N(x,\cdot)\|_{H^1([0,T])},
\end{multline}
for some constant $C$ only depending on $T$. Note that $\hu_N$ is infinitely differentiable in $t$, so we can do integration by parts on the resulting terms of \eqref{60} recursively for any number of times, and it finally leads to
\begin{equation}
|\mathcal{L}_x\hw_N(x)| = \frac{2N+1}{T}\left|\int_0^T\mathcal{L}_x\hu_N(x,t)\cdot L_N(\frac{2t}{T}-1)\td t\right|\leq CN^{1-p}\|\mathcal{L}_x\hu_N(\cdot,x)\|_{H^p([0,T])},
\end{equation}
where the integer $p$ can be arbitrarily large. This implies the regularization term $\lambda N^{-4}\|\mathcal{L}_x\hw_N\|_{L^2(\Omega)}^2$ decays exponentially to zero as $N\to\infty$. Therefore, if $N$ is moderately large, the regularization term can be neglected compared with the residual part of the loss that decays to zero in an algebraic rate as $M,N\to\infty$ (in the proof of Theorem \ref{thm02}).

\subsection{A posterior estimates}\label{sec_a_posterior_estimate}
We conduct error analysis for the proposed DABG formulation \eqref{22}. First, given an interval $I=[a,b]$, we introduce the weighted $L^2$-space with weight function $(b-z)^\alpha(z-a)^\alpha$, namely,
\begin{equation}
L^2_\alpha(I):=\left\{u:I\rightarrow\mathbb{R}~:\int_I|u|^2(b-z)^\alpha(z-a)^\alpha\td z<\infty\right\},\quad\alpha\in\mathbb{R},
\end{equation}
equipped with the norm
\begin{equation}
\|u\|_{L^2_\alpha(I)}:=\left(\int_I|u|^2(b-z)^\alpha(z-a)^\alpha\td z\right)^\frac{1}{2},
\end{equation}
and the non-uniformly Jacobi-weighted Sobolev space
\begin{equation}
\mathcal{W}^m(I):=\left\{u:I\rightarrow\mathbb{R}~:\partial_z^ku\in L^2_k(I),\quad0\leq k\leq m\right\},\quad m\in\mathbb{N},
\end{equation}
equipped with the norm
\begin{equation}
\|u\|_{\mathcal{W}^m(I)}:=\left(\sum_{k=0}^m\|\partial_z^ku\|_{L^2_k(I)}^2\right)^\frac{1}{2}.
\end{equation}
Also, we define the $L^2$-orthogonal projection $\pi^N_I:~L^2(I)\rightarrow P^N(I)$ such that
\begin{equation}
\int_I(\pi^N_Iu-u)v\td z=0,\quad\forall v\in P^N(I),
\end{equation}
given $u\in L^2(I)$. In the case $I=[-1,1]$, note that the Legendre polynomials $\{L_n\}_{n=0}^\infty$ forms an orthogonal basis for $L^2([-1,1])$, if we expand
$$
u(z)=\sum_{n=0}^\infty\tdu_nL_n(z),
$$
then
$$
\pi^N_{[-1,1]}u(z)=\sum_{n=0}^N\tdu_nL_n(z).
$$
The following lemmas show the approximation error between $u$ and $\pi^N_{[-1,1]}u$.
\begin{lemma}[Theorem 3.35, \cite{Shen2011}]
For any $u\in \mathcal{W}^m([-1,1])$, if $N\gg m$, we have
\begin{equation}
\|\pi^N_{[-1,1]}u-u\|_{L^2([-1,1])}\leq cN^{-m}\|\partial_z^mu\|_{L_m^2([-1,1])},
\end{equation}
where $c$ is a constant number.
\end{lemma}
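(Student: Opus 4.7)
The plan is to work directly with the Legendre expansion of $u$ and convert the $L^2$-projection error into a tail sum of squared Legendre coefficients via Parseval's identity, then bound those coefficients using an iterated integration-by-parts identity that exploits the Sturm--Liouville structure of Legendre polynomials.

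First I would expand $u(z) = \sum_{n=0}^\infty \tilde u_n L_n(z)$ with $\tilde u_n = \tfrac{2n+1}{2}\int_{-1}^1 u\, L_n\, dz$. Since $\pi^N_{[-1,1]}u$ truncates this series at degree $N$, the orthogonality \eqref{09} gives
\[
\|\pi^N_{[-1,1]}u-u\|_{L^2([-1,1])}^2 \;=\; \sum_{n=N+1}^\infty \frac{2}{2n+1}\,\tilde u_n^2 ,
\]
so the lemma reduces to a quantitative bound on the tail of $\{\tilde u_n\}$ in terms of $\partial_z^m u$.

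Next I would invoke the Rodrigues formula $L_n(z)=\frac{(-1)^n}{2^n n!}\partial_z^n[(1-z^2)^n]$ (equivalently, the identity $(1-z^2)\partial_z^2 L_n - 2z\partial_z L_n + n(n+1)L_n=0$) and integrate by parts $m$ times in $\int_{-1}^1 u\,L_n\,dz$. Each step transfers one derivative onto $u$ and produces a factor of the weight $(1-z^2)$, with the boundary terms vanishing thanks to the $(1-z^2)^k$ factors generated by the Rodrigues formula. After $m$ iterations one obtains a representation of the form
\[
\tilde u_n \;=\; c_n^{(m)} \int_{-1}^1 \partial_z^m u(z)\,\tilde J_{n-m}^{(m,m)}(z)\,(1-z^2)^{m}\,dz ,
\]
where $\tilde J_{n-m}^{(m,m)}$ is a Jacobi polynomial (up to normalization) and the scalar $c_n^{(m)}$ has size $\Theta(n^{-m})$, coming from the product of the factors $\bigl(n(n+1)\cdots(n-m+1)(n+m)\bigr)^{-1/2}$-type that appear when peeling off derivatives.

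Applying Cauchy--Schwarz in the Jacobi-weighted inner product with weight $(1-z^2)^m$ then yields
\[
\tilde u_n^2 \;\leq\; \bigl(c_n^{(m)}\bigr)^2\,\|\partial_z^m u\|_{L^2_m([-1,1])}^2\,\bigl\|\tilde J_{n-m}^{(m,m)}\bigr\|_{L^2_m([-1,1])}^2 ,
\]
and the last norm is explicit from the classical orthogonality of $\{J_k^{(m,m)}\}$ against $(1-z^2)^m$. Substituting into the Parseval tail and summing $n \geq N+1$ for $N \gg m$, the factors $\bigl(c_n^{(m)}\bigr)^2 \|\tilde J_{n-m}^{(m,m)}\|_{L^2_m}^2/(2n+1)$ collapse under standard ratio estimates to a convergent series bounded by a constant multiple of $N^{-2m}$, which is exactly the stated bound. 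The main obstacle in this plan is the clean bookkeeping of the constants in the iterated integration by parts and verifying that the accumulated Jacobi-norm weights decay at the advertised power $N^{-2m}$ rather than a weaker rate; once that algebraic accounting is settled, every other step reduces to Parseval, Cauchy--Schwarz, and orthogonality.
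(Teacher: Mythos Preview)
The paper does not supply its own proof of this lemma: it is quoted verbatim as Theorem~3.35 of \cite{Shen2011} and used as a black box. Your proposal is therefore not being compared against any argument in the paper itself.

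That said, your strategy is correct and is essentially the textbook proof one finds in the cited reference. The key mechanism---Parseval on the Legendre tail, then the identity that $m$-fold differentiation of the Legendre expansion of $u$ produces a Jacobi expansion of $\partial_z^m u$ with respect to the weight $(1-z^2)^m$, so that the Legendre coefficients $\tilde u_n$ and the weighted $L^2_m$-norm of $\partial_z^m u$ are linked by an explicit factor of size $n^{-m}$---is exactly the standard argument. In \cite{Shen2011} this is phrased by writing $\partial_z^m u=\sum_{n\geq m}\hat u_n^{(m)}J_{n-m}^{(m,m)}$ and comparing the two Parseval identities directly, rather than via iterated integration by parts, but the content is the same and the constants that must be tracked are identical. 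Your acknowledged ``main obstacle'' of bookkeeping the accumulated factors is precisely the computation of the ratio $\gamma_n^{(0)}\big/\bigl((d_n^{(m)})^2\gamma_n^{(m)}\bigr)\sim n^{-2m}$ carried out there; once that is verified the rest is routine.
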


By rescaling the interval, we directly have
\begin{corollary}\label{cor01}
For any $u\in \mathcal{W}^m([0,T])$, if $N\gg m$, we have
\begin{equation}
\|\pi^N_{[0,T]}u-u\|_{L^2([0,T])}\leq cN^{-m}\|\partial_t^mu\|_{L_m^2([0,T])},
\end{equation}
where $c$ is a constant number only depending on $m$ and $T$.
\end{corollary}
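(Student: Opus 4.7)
The plan is to reduce Corollary \ref{cor01} to the reference-interval result (the preceding Theorem 3.35 \cite{Shen2011}) via the affine change of variables $z = \frac{2t}{T}-1$, which maps $[-1,1]$ bijectively onto $[0,T]$. All the ingredients --- polynomial spaces, Jacobi weights, $L^2$-projections, and the Sobolev norm $\|\cdot\|_{\mathcal{W}^m}$ --- transform cleanly under this affine map, so the corollary should follow from pulling the reference estimate back to $[0,T]$ and collecting the induced factors of $T/2$.

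First I would introduce $\tilde{u}(z) := u\bigl(T(z+1)/2\bigr)$, so that $\partial_z^k \tilde{u}(z) = (T/2)^k \,\partial_t^k u(t)$ with $t = T(z+1)/2$, and the Jacobian is $\td t = (T/2)\,\td z$. Next I would verify the commutation identity
\begin{equation}
(\pi^N_{[0,T]} u)(t) = (\pi^N_{[-1,1]} \tilde{u})\bigl(\tfrac{2t}{T}-1\bigr),\notag
\end{equation}
which is immediate from the defining orthogonality: pulling back any test polynomial $v \in P^N([0,T])$ to $v(T(\cdot+1)/2) \in P^N([-1,1])$ turns $\int_0^T(\pi^N_{[0,T]}u - u)v\,\td t = 0$ into $\int_{-1}^1(\pi^N_{[-1,1]}\tilde{u} - \tilde{u})\cdot(\text{pullback})\,\td z = 0$ up to the constant Jacobian, and $P^N$ is mapped bijectively to $P^N$ by the affine change.

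With this in hand, a short calculation gives $\|\pi^N_{[0,T]} u - u\|_{L^2([0,T])}^2 = (T/2)\,\|\pi^N_{[-1,1]}\tilde{u} - \tilde{u}\|_{L^2([-1,1])}^2$. On the right-hand norm, the weight transforms as $(T-t)^m t^m = (T/2)^{2m}(1-z)^m(1+z)^m$, which together with the chain rule $\partial_t^m u = (2/T)^m \partial_z^m\tilde{u}$ and the Jacobian yields
\begin{equation}
\|\partial_t^m u\|_{L^2_m([0,T])}^2 = \bigl(\tfrac{T}{2}\bigr)\,\|\partial_z^m \tilde{u}\|_{L^2_m([-1,1])}^2. \notag
\end{equation}
Applying the reference lemma to $\tilde{u}$ and substituting these two identities then produces the claimed bound with constant $c$ depending only on $m$ and $T$.

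The proof is essentially bookkeeping, so no real obstacle is expected; the only care needed is to track the powers of $T/2$ consistently --- one from the Jacobian on the left, and the combination $(T/2)^{2m}$ from the weight with $(2/T)^{2m}$ from the chain rule on the right, which cancel to leave a single $(T/2)$ factor on each side. After cancellation, the $T$-dependence is absorbed into the constant $c$, matching the statement of the corollary.
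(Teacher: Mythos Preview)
Your proposal is correct and is precisely the ``rescaling the interval'' argument the paper invokes without writing out; the affine pullback, the commutation of $\pi^N$ with the change of variables, and the tracking of weight and Jacobian factors are all handled correctly. As a minor bonus, your bookkeeping shows the $(T/2)$ factors on the two sides cancel exactly, so the constant $c$ in fact need not depend on $T$ --- slightly sharper than the corollary claims.
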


In the following passage, we will apply $\pi^N_{[0,T]}$ to functions of variables $x$ and $t$, which means we apply the orthogonal projection $\pi^N_{[0,T]}$ with respect to $t$ for each $x$ pointwise.

Now we derive the a posterior estimate for the DABG method. For simplicity, we only consider the heat equation with $\mathcal{L}_x=-\Delta_x$. It can be derived that the error is bounded by the final minimized loss $J^{\tP}$ and $O(N^{-m})$, if $f$ has $\mathcal{W}^m([0,T])$ regularity in $t$. The analysis can be easily generalized for other types of parabolic equations.
\begin{theorem}\label{thm01}
Let $u$ be the classical solution of \eqref{02} with $\mathcal{L}_x=-\Delta_x$. Let $m\geq0$ be some integer. Suppose that $f\in\mathcal{W}^m(0,T;L^2(\Omega))$, $u\in L^2(0,T;H_0^1(\Omega))$ and $\partial_tu\in L^2(0,T;H^{-1}(\Omega))$. Let $\hu_N(x,t)=\sum_{n=1}^N\hw_n(x)\phi_n^{(1)}(t)$, where $\{\hw_n\}_{n=1}^N$ solves the minimization \eqref{22}. Suppose that $\hw_n\in H_0^1(\Omega)\cap H^{-1}(\Omega)$ is twice differentiable for $n=1,\cdots,N$. Denote $\delta=J^{\tP}[\hw_1,\cdots,\hw_N]$ as the minimized loss. Then
\begin{multline}
\underset{0\leq t\leq T}{\sup}\|u(\cdot,t)-\hu_N(\cdot,t)\|_{H^1_0(\Omega)}+\|u-\hu_N\|_{L^2(0,T;H^2(\Omega))}+\|\partial_t(u-\hu_N)\|_{L^2(0,T;L^2(\Omega))}\\
\leq c\left(N^\frac{3}{2}\sqrt{\delta}+N^{-m} \|\partial_t^m f\|_{L_m^2(0,T;L^2(\Omega))}\right)
\end{multline}
provided that $N\gg m$, where $c$ is a constant only depending on $\Omega$, $m$, $T$ and $\lambda$.
\end{theorem}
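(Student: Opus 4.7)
The plan is to reduce the estimate to a bound on the interior residual
\begin{equation*}
\tilde f(x,t) := \partial_t\hu_N(x,t) + \mathcal{L}_x\hu_N(x,t) - f(x,t)
\end{equation*}
in $L^2(0,T;L^2(\Omega))$ and then to invoke classical parabolic regularity. Setting $e := u - \hu_N$, observe that $\phi_n^{(1)}(0)=L_n(-1)+L_{n-1}(-1)=0$ for $n\geq 1$ and that every $\hw_n\in\mathcal{S}$ vanishes on $\partial\Omega$, so $e$ solves $\partial_t e-\Delta_x e=-\tilde f$ with zero initial and boundary data. Standard parabolic regularity (e.g.\ Evans, Ch.~7) then yields
\begin{equation*}
\sup_{0\leq t\leq T}\|e(\cdot,t)\|_{H_0^1} + \|e\|_{L^2(0,T;H^2)} + \|\partial_t e\|_{L^2(0,T;L^2)} \leq c\,\|\tilde f\|_{L^2(0,T;L^2(\Omega))},
\end{equation*}
so the entire task reduces to controlling $\|\tilde f\|_{L^2(L^2)}$ by $N^{3/2}\sqrt\delta + N^{-m}\|\partial_t^m f\|_{L^2_m(L^2)}$.

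For this I would exploit the polynomial structure of $\hu_N$ in $t$ and write the Legendre expansion $\tilde f(x,t)=\sum_{n\geq 0} c_n(x) L_n\!\left(\tfrac{2t}{T}-1\right)$. Since $\partial_t\hu_N\in P^{N-1}$ while $\mathcal{L}_x\hu_N\in P^{N}$, one reads off $c_n(x)=-f_n(x)$ for $n>N$ (with $f_n$ the Legendre coefficients of $f(x,\cdot)$), $c_N(x)=\mathcal{L}_x\hw_N(x)-f_N(x)$, and $c_0,\ldots,c_{N-1}$ remain to be recovered from the Galerkin residuals. Using $\psi_1^{(1)}=L_0$, $\psi_j^{(1)}=L_{j-1}-L_{j-2}$ for $j\geq 2$, and the orthogonality $\int_0^T L_m L_n\,\td t=\tfrac{T}{2n+1}\delta_{mn}$, the identity $R_j^{(1)}(x)=(\tilde f(x,\cdot),\psi_j^{(1)})$ produces the triangular system
\begin{equation*}
R_1^{(1)}=T\,c_0,\qquad R_j^{(1)}=\tfrac{T\,c_{j-1}}{2j-1}-\tfrac{T\,c_{j-2}}{2j-3}\quad(j\geq 2),
\end{equation*}
which telescopes cleanly to $c_n(x)=\tfrac{2n+1}{T}\sum_{j=1}^{n+1}R_j^{(1)}(x)$ for $0\leq n\leq N-1$.

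Finally I would assemble $\|\tilde f\|_{L^2(L^2)}^2=\sum_n\tfrac{T}{2n+1}\|c_n\|_{L^2(\Omega)}^2$ in three blocks. For the low-frequency block, Cauchy--Schwarz on the telescoped formula together with the key identity
\begin{equation*}
\sum_{n=j-1}^{N-1}(2n+1)(n+1)=\sum_{m=j}^{N}m(2m-1)=N^3 r_j
\end{equation*}
gives $\sum_{n=0}^{N-1}\tfrac{T}{2n+1}\|c_n\|^2\leq \tfrac{N^3}{T}\sum_j r_j\|R_j^{(1)}\|_{L^2(\Omega)}^2\leq N^3\delta/T$. For the level $n=N$, the split $\|c_N\|^2\leq 2\|\mathcal{L}_x\hw_N\|^2+2\|f_N\|^2$ is absorbed as follows: the first piece is controlled by the regularization term ($\lambda N^{-4}\|\mathcal{L}_x\hw_N\|^2\leq\delta$ yields an $O(N^3\delta)$ contribution after multiplying by $\tfrac{T}{2N+1}$), while the second piece merges with the high-frequency tail. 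That tail equals $\|f-\pi^N_{[0,T]}f\|_{L^2(L^2)}^2$ and is bounded by $cN^{-2m}\|\partial_t^m f\|_{L^2_m(L^2)}^2$ via Corollary~\ref{cor01}. Combining the three blocks with the parabolic estimate completes the proof. The main obstacle I expect is the combinatorial bookkeeping of the middle step: solving the triangular Galerkin system by telescoping, and uncovering the identity above that reveals exactly why the weights $r_j$ and the regularization factor $N^{-4}$ in $J^{\tP}$ are the unique choices making all three blocks line up into a single $O(N^{3/2}\sqrt\delta + N^{-m})$ bound.
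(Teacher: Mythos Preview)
Your proposal is correct and follows essentially the same route as the paper's proof: reduce to parabolic regularity applied to the residual $\hp=\partial_t\hu_N-\Delta_x\hu_N-f$, expand $\hp$ in the Legendre basis $\{\tilde L_n\}$, and split into the low modes $n\le N-1$, the single mode $n=N$, and the tail. Your telescoping identity $c_n=\tfrac{2n+1}{T}\sum_{j=1}^{n+1}R_j^{(1)}$ is exactly the paper's relation $\tilde L_n=\sum_{j=1}^{n+1}\psi_j^{(1)}$ read through $(\hp,\tilde L_n)$, and the combinatorial identity $\sum_{n=j-1}^{N-1}(2n+1)(n+1)=N^3 r_j$ you single out is precisely the step \eqref{29} in the paper; the only cosmetic difference is that the paper packages the $n\ge N$ contribution via a triangle inequality on $\pi^{N-1}_{[0,T]}f-f$ and $\Delta_x\hw_N\,\tilde L_N$ rather than via your orthogonal mode-by-mode split.
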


\begin{proof}
Let $\he_N:=\hu_N-u$, then it satisfies
\begin{equation}
\begin{cases}
\partial_t \he_N(x,t)-\Delta_x\he_N(x,t)=\hp(x,t):=\partial_t \hu_N(x,t)-\Delta_x\hu_N(x,t)-f(x,t),\quad\text{in}~\Omega\times(0,T],\\
\he_N(x,0) = 0,\quad\text{in}~\Omega,\\
\he_N(x,t) = 0,\quad\text{on}~\partial\Omega\times[0,T].
\end{cases}
\end{equation}
By the hypothesis, $\hp\in L^2(0,T;L^2(\Omega))$, $\he_N\in L^2(0,T;H_0^1(\Omega))$ and $\partial_t\he_N\in L^2(0,T;H^{-1}(\Omega))$. Then using a priori estimates for heat equations (e.g., Theorem 5, Page. 382 in \cite{Evans2010}), we can obtain
\begin{equation}
\underset{0\leq t\leq T}{\sup}\|\he_N(\cdot,t)\|_{H^1_0(\Omega)}+\|\he_N\|_{L^2(0,T;H^2(\Omega))}+\|\partial_t\he_N\|_{L^2(0,T;L^2(\Omega))}\leq c_1\|\hp\|_{L^2(0,T;L^2(\Omega))},
\end{equation}
where $c_1$ only depends on $\Omega$ and $T$. It suffices to estimate the right hand side.

Denote $\tilde{L}_n(t):=L_n(\frac{2t}{T}-1)$, then $\{\tilde{L}_n\}_{n=0}^N$ is an orthogonal basis for $P^N([0,T])$, and $\{\tilde{L}_n\}_{n=0}^\infty$ is an orthogonal Schauder basis for $L^2([0,T])$. Especially, by \eqref{09} we have
\begin{equation}\label{24}
\int_0^T\tilde{L}_n(t)\tilde{L}_m(t)\td t=\frac{T}{2n+1}\delta_{mn},\quad\forall m,n\in\mathbb{N}.
\end{equation}

Note that
\begin{multline}\label{32}
\hp(x,t)=\sum_{n=1}^N\left(\hw_n(x)\partial_t\phi_n^{(1)}(t)-\Delta_x\hw_n(x)\phi_n^{(1)}(t)\right)-\pi^{N-1}_{[0,T]}f+(\pi^{N-1}_{[0,T]}f-f)\\
=\left(\sum_{n=1}^N\hw_n(x)\partial_t\phi_n^{(1)}(t)-\sum_{n=1}^{N-1}\Delta_x\hw_n(x)\phi_n^{(1)}(t)-\Delta_x\hw_N(x)\tilde{L}_{N-1}(t)-\pi^{N-1}_{[0,T]}f\right)\\
+\left(\pi^{N-1}_{[0,T]}f-f-\Delta_x\hw_N(x)\tilde{L}_N(t)\right):=I_1(x,t)+I_2(x,t).
\end{multline}
Clearly, \eqref{32} is an orthogonal decomposition on the subspace $P^{N-1}([0,T])$, namely, $I_1=\pi^{N-1}_{[0,T]}\hp$ and $I_2=\hp-\pi^{N-1}_{[0,T]}\hp$. Then we have
\begin{multline}\label{33}
\|\hp\|_{L^2(0,T;L^2(\Omega))}\\
\leq \|\pi^{N-1}_{[0,T]}\hp\|_{L^2(0,T;L^2(\Omega))}+\|\pi^{N-1}_{[0,T]}f-f\|_{L^2(0,T;L^2(\Omega))}+\|\Delta_x\hw_N(x)\tilde{L}_N(t)\|_{L^2(0,T;L^2(\Omega))}.
\end{multline}

For the first term in \eqref{33}, note that $\pi^{N-1}_{[0,T]}\hp(x,t)=\sum_{n=0}^{N-1}\tdp_n(x)\tilde{L}_n(t)$, with
\begin{equation}\label{31}
\tdp_n(x)=\frac{\int_0^T\hp(x,t)\tilde{L}_n(t)\td t}{\int_0^T|\tilde{L}_n(t)|^2\td t}=\frac{2n+1}{T}\int_0^T\hp(x,t)\tilde{L}_n(t)\td t.
\end{equation}
Also, note that $\tilde{L}_n=\sum_{j=1}^{n+1}\psi_j^{(1)}$ for $0,\cdots,N-1$, then for every $x\in\Omega$,
\begin{multline}\label{29}
\sum_{n=0}^{N-1}(2n+1)\left(\hp(x,\cdot),\tilde{L}_n\right)^2=\sum_{n=0}^{N-1}\left(\sum_{j=1}^{n+1}\sqrt{2n+1}\left(\hp(x,\cdot),\psi_j^{(1)}\right)\right)^2\\
\leq\sum_{n=0}^{N-1}(n+1)\sum_{j=1}^{n+1}(2n+1)\left(\hp(x,\cdot),\psi_j^{(1)}\right)^2=\sum_{j=1}^N\sum_{n=j-1}^{N-1}(n+1)(2n+1)\left(\hp(x,\cdot),\psi_j^{(1)}\right)^2\\
=N^3\sum_{j=1}^Nr_j\left(\hp(x,\cdot),\psi_j^{(1)}\right)^2=N^3\sum_{j=1}^Nr_j\left|R_j^{(1)}[\hw_1(x),\cdots,\hw_N(x)]\right|^2.
\end{multline}
Therefore, by \eqref{29},
\begin{multline}\label{27}
\|\pi^{N-1}_{[0,T]}\hp\|_{L^2(0,T;L^2(\Omega))}^2=\int_\Omega\left\|\sum_{n=0}^{N-1}\tdp_n(x)\tilde{L}_n(\cdot)\right\|_{L^2([0,T])}^2\td x\\
=\int_\Omega\int_0^T\sum_{n=0}^{N-1}\tdp_n^2(x)\tilde{L}_n^2(t)\td t\td x=\sum_{n=0}^{N-1}\int_\Omega \tdp_n^2(x)\td x\int_0^T\tilde{L}_n^2(t)\td t\\
\leq\sum_{n=0}^{N-1}\frac{2n+1}{T}\left\|\int_0^T\hp(x,t)\tilde{L}_n(t)\td t\right\|_{L^2(\Omega)}^2\leq \frac{N^3}{T}\sum_{j=1}^{N}r_j \left\|R_j[\hw_1,\cdots,\hw_N]\right\|_{L^2(\Omega)}^2.
\end{multline}

For the second term in \eqref{33}, using Corollary \ref{cor01} we have if $N\gg m$,
\begin{multline}\label{28}
\|\pi^{N-1}_{[0,T]}f-f\|_{L^2(0,T;L^2(\Omega))}^2\leq\int_\Omega  c_2N^{-2m}\|\partial_t^mf(x,\cdot)\|_{L_m^2([0,T])}^2\td x\\
= c_2N^{-2m} \|\partial_t^mf\|_{L_m^2(0,T;L^2(\Omega))}^2,
\end{multline}
where $c_2$ only depends on $m$ and $T$.

For the third term in \eqref{33}, using \eqref{24} we have
\begin{equation}\label{26}
\|\Delta_x\hw_N(x)\tilde{L}_N(t)\|_{L^2(0,T;L^2(\Omega))}=\|\Delta_x\hw_N\|_{L^2(\Omega)}\|\tilde{L}_N\|_{L^2([0,T])}=\sqrt{\frac{T}{2N+1}} \|\Delta_x\hw_N\|_{L^2(\Omega)}.
\end{equation}

Combining \eqref{27}, \eqref{28} and \eqref{26} completes the proof.
\end{proof}

\begin{remark}
In Theorem \ref{thm01}. the derived error bound has a quantity $O(N^\frac{3}{2})$ multiplied to the minimal loss $\sqrt{\delta}$. This quantity can also be interpreted as the condition number of the transformation matrix between the used basis $\{\psi^{(1)}_j\}_{j=1}^N$ and the orthonormal basis $\{\sqrt{\frac{2n+1}{T}}\tilde{L}_n\}_{n=0}^{N-1}$ of the test space $Y_N^{(1)}$. More precisely, we have $\left[\sqrt{\frac{1}{T}}\tilde{L}_0~\cdots~\sqrt{\frac{2N-1}{T}}\tilde{L}_{N-1}\right]^\top=D\left[\psi^{(1)}_1~\cdots~\psi^{(1)}_N\right]^\top$ with
$$
D_{ij}=\begin{cases}\sqrt{\frac{2i-1}{T}},\quad j\leq i,\\0,\quad \quad \quad \quad j>i,\end{cases}$$ and the 2-condition number of $D$ is estimated as $O(N^\frac{3}{2})$. A slightly different proof can be derived from this fact.
\end{remark}

\subsection{A priori error estimates}\label{sec_a_priori_estimate}
Now let us investigate how small the loss function $J^{\tP}$ in \eqref{22} can be minimized if we use special hypothesis space $\mathcal{S}$. Recall that $\mathcal{S}=\left\{\hw=\nu\hphi,~\hphi\in\mathcal{F}_{L,M}\right\}$ has an FNN-based structure, it is expected that $\mathcal{S}$ with larger depth $L$ or width $W$ leads to smaller optimal $J^{\tP}$. More precisely, we expect the convergence $\min_{\hw_n\in\mathcal{S}}~J^{\tP}\rightarrow0$ if $L,M\rightarrow\infty$. If this holds, then Theorem \ref{thm01} directly shows that the solution error converges to zero as $L,M,N\rightarrow\infty$ in some sense.

\subsubsection{Barron function and its approximation}
The analysis is essentially based on the approximation property of FNNs for special functions. Although neural network approximation theory has been widely developed in recent years, we will specially consider in this paper the approximation error for Barron-type functions in $H^2$ norm. It has been studied that the approximation errors of two-layer FNNs for Barron-type functions have dimension-independent rates with respect to the width \cite{Barron1992,Barron1993,E2019,E2020_2,Siegel2020_1,Siegel2020_2,Caragea2020}. We remark that stronger error bounds that overcome the curse of dimensionality can be achieved by using FNNs with advanced activation functions \cite{Shen2021_1,Shen2021_2,Shen2021_3,Jiao2021}.

Consider the following class of two-layer FNNs with $M$ neurons
\begin{equation}\label{35}
\mathcal{F}_{2,M}=\left\{\hphi(x;\theta)=\sum_{m=1}^Ma_m\sigma(w_m^\top x+b_m):~w_m\in\mathbb{R}^d,a_m,b_m\in\mathbb{R}\right\},
\end{equation}
where $\theta:=\{a_m,w_m,b_m\}_{1\leq m\leq M}$ is the set of parameters. Note the definition \eqref{35} coincides with the generic FNN \eqref{36} when $L=2$.

Also, given a function $f:\mathbb{R}^d\rightarrow\mathbb{R}$, its $s$-th order Barron norm is defined as
\begin{equation}
\|f\|_{\mathcal{B}^s}=\int_{\mathbb{R}^d}(1+|\omega|)^s|\mathcal{F}[f]|\td \omega,
\end{equation}
where
\begin{equation}
\mathcal{F}[f](\omega):=(2\pi)^{-d/2}\int_{\mathbb{R}^d}f(x)e^{-\ti \omega^\top x}\td x,
\end{equation}
is the Fourier transform of $f$. We use $\mathcal{B}^s$ to denote the Banach space of functions having bounded Barron norm, namely, the Barron space. It is shown that $\mathcal{B}^s$ is continuously embedded in $H^s(\Omega)$ if $\Omega$ is bounded \cite{Siegel2020_1}.

The following lemma shows differentiable functions with compact support and sufficiently high-order derivatives are in $\mathcal{B}^s$.
\begin{lemma}\label{lem04}
Let $s\geq1$ and $f\in C_\tc^\gamma(\mathbb{R}^d)$ with $\gamma>d/2+s$, then $f\in\mathcal{B}^s$ in the sense that
\begin{equation}
\|f\|_{\mathcal{B}^s}^2\leq c\int_{\mathbb{R}^d}|f|^2+\left|D^\gamma f\right|^2 \td x<\infty,
\end{equation}
for some constant $c$ only depending on $\gamma$ and $s$.
\end{lemma}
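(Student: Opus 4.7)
The plan is a standard weighted Cauchy--Schwarz argument on the Fourier side, together with Plancherel's theorem. Write
\begin{equation*}
\|f\|_{\mathcal{B}^s} \;=\; \int_{\mathbb{R}^d}(1+|\omega|)^s|\mathcal{F}[f](\omega)|\,\td\omega
\;=\; \int_{\mathbb{R}^d}\frac{(1+|\omega|)^s}{(1+|\omega|)^\gamma}\cdot(1+|\omega|)^\gamma|\mathcal{F}[f](\omega)|\,\td\omega,
\end{equation*}
and apply Cauchy--Schwarz to split the integrand into the weight piece and the Fourier piece:
\begin{equation*}
\|f\|_{\mathcal{B}^s}^2 \;\leq\; \underbrace{\int_{\mathbb{R}^d}(1+|\omega|)^{-2(\gamma-s)}\,\td\omega}_{=: I_1}\;\cdot\;\underbrace{\int_{\mathbb{R}^d}(1+|\omega|)^{2\gamma}|\mathcal{F}[f](\omega)|^2\,\td\omega}_{=: I_2}.
\end{equation*}

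First I would verify that $I_1<\infty$. Passing to polar coordinates, $I_1$ is proportional to $\int_0^\infty(1+r)^{-2(\gamma-s)}r^{d-1}\td r$, which converges exactly when $2(\gamma-s)>d$, i.e.\ $\gamma>d/2+s$, precisely the standing hypothesis. This gives $I_1\leq c_1(\gamma,s)$.

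Next I would bound $I_2$ by the stated $L^2$ quantity on the physical side. Using the elementary inequality $(1+|\omega|)^{2\gamma}\leq 2^{2\gamma}(1+|\omega|^{2\gamma})$ and Plancherel's identity,
\begin{equation*}
I_2 \;\leq\; 2^{2\gamma}\!\int_{\mathbb{R}^d}|\mathcal{F}[f]|^2\td\omega + 2^{2\gamma}\!\int_{\mathbb{R}^d}|\omega|^{2\gamma}|\mathcal{F}[f]|^2\td\omega
\;=\; 2^{2\gamma}\|f\|_{L^2}^2 + 2^{2\gamma}\!\int_{\mathbb{R}^d}|\omega|^{2\gamma}|\mathcal{F}[f]|^2\td\omega.
\end{equation*}
The second term is identified with an $L^2$ norm of the $\gamma$-th derivatives: using $\mathcal{F}[\partial^\alpha f](\omega)=(\ti\omega)^\alpha\mathcal{F}[f](\omega)$ together with the multinomial identity $|\omega|^{2\gamma}=(\sum_{j}\omega_j^2)^\gamma = \sum_{|\alpha|=\gamma}\binom{\gamma}{\alpha}\omega^{2\alpha}$ and Plancherel applied term by term yields
\begin{equation*}
\int_{\mathbb{R}^d}|\omega|^{2\gamma}|\mathcal{F}[f]|^2\td\omega \;=\; \sum_{|\alpha|=\gamma}\binom{\gamma}{\alpha}\|\partial^\alpha f\|_{L^2}^2 \;\leq\; c_2(\gamma)\,\|D^\gamma f\|_{L^2}^2.
\end{equation*}
The hypothesis $f\in C_{\tc}^\gamma(\mathbb{R}^d)$ ensures every such derivative lives in $L^2$, so $I_2$ is finite. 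Combining with the $I_1$ bound produces the claimed estimate with $c=c_1(\gamma,s)\cdot 2^{2\gamma}\max\{1,c_2(\gamma)\}$.

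No step here should pose a serious obstacle; the only thing to watch is the interpretation of $|D^\gamma f|$ when $\gamma$ is an integer (the convention used implicitly above, where $|D^\gamma f|^2$ is equivalent to $\sum_{|\alpha|=\gamma}|\partial^\alpha f|^2$), and, if $\gamma$ is allowed to be non-integer, to interpret $D^\gamma$ as the fractional Laplacian $(-\Delta)^{\gamma/2}$ so that $\widehat{D^\gamma f}(\omega)=|\omega|^\gamma\mathcal{F}[f](\omega)$. In either reading the two displayed Plancherel identities go through verbatim, and the overall argument is unchanged.
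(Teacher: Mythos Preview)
Your proof is correct and follows essentially the same route as the paper: a weighted Cauchy--Schwarz split on the Fourier side, integrability of the weight from the hypothesis $\gamma>d/2+s$, and Plancherel/Parseval to pass back to $\|f\|_{L^2}^2+\|D^\gamma f\|_{L^2}^2$. The only cosmetic difference is that the paper inserts the weight $(1+|\omega|^{2(\gamma-s)})^{\pm 1/2}$ whereas you insert $(1+|\omega|)^{\pm(\gamma-s)}$; these are equivalent up to constants, and your choice in fact makes the passage to $(1+|\omega|^{2\gamma})$ slightly cleaner.
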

\begin{proof}
\begin{multline}\label{48}
\|f\|_{\mathcal{B}^s}=\int_{\mathbb{R}^d}(1+|\omega|)^s\left|\mathcal{F}[f]\right|\td\omega\\
=\int_{\mathbb{R}^d}(1+|\omega|^{2(\gamma-s)})^{-1/2}\cdot(1+|\omega|^{2(\gamma-s)})^{1/2}(1+|\omega|)^s\left|\mathcal{F}[f]\right|\td\omega.
\end{multline}
Since $\gamma>d/2+s$, $\int_{\mathbb{R}^d}(1+|\omega|^{2(\gamma-s)})^{-1}\td\omega$ is a finite number depending on $\gamma$ and $s$. So using Cauchy-Schwarz inequality on \eqref{48} leads to
\begin{equation}\label{49}
\|f\|_{\mathcal{B}^s}^2\leq c\int_{\mathbb{R}^d}(1+|\omega|^{2\gamma})\left|\mathcal{F}[f]\right|^2\td\omega.
\end{equation}
Using the identity $\mathcal{F}[\partial^\alpha_x f]=(\ti \omega)^\alpha\mathcal{F}[f]$ for each multiindex $\alpha$ satisfying $|\alpha|\leq\gamma$ and Parseval's identity, it follows \eqref{49} that
\begin{equation}
\|f\|_{\mathcal{B}^s}^2\leq c\int_{\mathbb{R}^d}\left|\mathcal{F}[f]\right|^2+\left|\mathcal{F}[D^\gamma f]\right|^2 \td\omega=c\int_{\mathbb{R}^d}|f|^2+\left|D^\gamma f\right|^2 \td x,
\end{equation}
which is finite since $f\in C_\tc^\gamma(\mathbb{R}^d)$.
\end{proof}

Next, we introduce the approximation property of FNNs, which implies any function in the Barron space can be efficiently approximated by two-layer FNNs in Sobolev norms.
\begin{lemma}[Corollary 1, \cite{Siegel2020_1}]\label{lem01}
Let $s\in\mathbb{N}$. Suppose the activation function of $\mathcal{F}_{2,M}$ is set as the sigmoidal function $\sigma(x)=(1+e^{-x})^{-1}$, arctan $\sigma(x)=\arctan(x)$, hyperbolic tangent $\sigma(x)=\tanh(x)$ or SoftPlus $\sigma(x)=\log(1+e^x)$. Then for any $f\in \mathcal{B}^{s+1}$, we have
\begin{equation}
\underset{\hphi\in\mathcal{F}_{2,M}}{\inf}\|\hphi-f\|_{H^s(\Omega)}\leq|\Omega|^{\frac{1}{2}}C(s,\text{diam}(\Omega),\sigma)\|f\|_{\mathcal{B}^{s+1}}M^{-\frac{1}{2}}.
\end{equation}
\end{lemma}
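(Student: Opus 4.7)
The plan is to follow the Maurey--Barron probabilistic strategy adapted to $H^s$ approximation by sigmoidal networks, closing with a ridge-function replacement step that converts sinusoidal atoms into $\sigma$-neurons.

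First, I would use Fourier inversion to write $f(x)=(2\pi)^{-d/2}\int_{\mathbb{R}^d}e^{\ti\omega^\top x}\mathcal{F}[f](\omega)\td\omega$ for $x\in\Omega$ and then extract the phase to obtain the real representation $f(x)=\int_{\mathbb{R}^d}\cos(\omega^\top x+\theta(\omega))|\mathcal{F}[f](\omega)|\td\omega$. Setting
\begin{equation}
\td\mu(\omega):=\frac{(1+|\omega|)^{s+1}|\mathcal{F}[f](\omega)|}{\|f\|_{\mathcal{B}^{s+1}}}\td\omega,\qquad g_\omega(x):=\frac{\cos(\omega^\top x+\theta(\omega))}{(1+|\omega|)^{s+1}},
\end{equation}
turns $\mu$ into a probability measure on $\mathbb{R}^d$ and yields the representation $f(x)=\|f\|_{\mathcal{B}^{s+1}}\int_{\mathbb{R}^d}g_\omega(x)\td\mu(\omega)$, which is the natural integral form for a Maurey argument.

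Next, I would check that every atom $g_\omega$ satisfies $\|g_\omega\|_{H^s(\Omega)}\leq|\Omega|^{1/2}C_1(s,\mathrm{diam}(\Omega))$ uniformly in $\omega$: each partial derivative of order $\leq s$ of $\cos(\omega^\top x+\theta)$ grows by at most $|\omega|^s$, which is absorbed by the denominator $(1+|\omega|)^{s+1}$. This is precisely why the hypothesis uses $\mathcal{B}^{s+1}$ rather than $\mathcal{B}^0$. With this uniform bound in hand, sampling $\omega_1,\dots,\omega_M\sim\mu$ i.i.d.\ and forming $f_M(x):=\|f\|_{\mathcal{B}^{s+1}}M^{-1}\sum_{j=1}^M g_{\omega_j}(x)$ leads, via a standard second-moment computation in the Hilbert space $H^s(\Omega)$, to
\begin{equation}
\mathbb{E}\|f-f_M\|_{H^s(\Omega)}^2\leq\frac{\|f\|_{\mathcal{B}^{s+1}}^2}{M}\sup_{\omega}\|g_\omega\|_{H^s(\Omega)}^2\leq\frac{|\Omega|\,C_1^2\,\|f\|_{\mathcal{B}^{s+1}}^2}{M},
\end{equation}
so at least one realisation of $f_M$ matches the stated $M^{-1/2}$ rate with cosine atoms.

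The remaining obstacle, and the hardest one, is that $f_M$ is built from sinusoidal atoms rather than from $\sigma(w^\top x+b)$ neurons. I would close the proof by a ridge-function replacement lemma: for each of the four activations listed in the statement, and for any target frequency $\omega$ and phase $\theta$, one constructs an $\mathcal{O}(1)$-neuron gadget $G_{\omega,\theta}(x)=\sum_k a_k\sigma(w_k^\top x+b_k)$ approximating $\cos(\omega^\top x+\theta)$ to accuracy $\varepsilon$ in $H^s(\Omega)$, with coefficient norms uniform in $\omega$. Reducing along the direction $\omega/|\omega|$ brings this to a one-dimensional problem; a truncated Fourier-type expansion of $\cos$ in shifted dilates of $\sigma$ (which is $C^\infty$ for all four activations) then supplies the quantitative rate. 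Replacing each cosine in $f_M$ by the corresponding gadget with $\varepsilon=M^{-1/2}$ produces a network in $\mathcal{F}_{2,M'}$ with $M'=\mathcal{O}(M)$, and absorbing the two error sources gives the required $H^s$ bound. The genuinely delicate point is producing \emph{frequency-independent} coefficient bounds in this replacement, because otherwise the constant $C(s,\mathrm{diam}(\Omega),\sigma)$ would inherit a spurious factor growing with $|\omega|$; the rest of the argument is bookkeeping of the classical Barron--Maurey type.
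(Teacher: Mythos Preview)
The paper does not prove this lemma; it is quoted verbatim as Corollary~1 of \cite{Siegel2020_1} and used as a black box. Your first three steps (Fourier inversion, reweighting to a probability measure, Maurey sampling in the Hilbert space $H^s(\Omega)$) are exactly the standard Barron--Maurey skeleton and are fine; in particular the reason for the exponent $s+1$ in $\mathcal{B}^{s+1}$ is correctly identified.

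The gap is in your step 4. You propose to replace each sampled cosine atom by an $\mathcal{O}(1)$-neuron gadget achieving $H^s$ accuracy $\varepsilon=M^{-1/2}$, and then claim the resulting network lives in $\mathcal{F}_{2,M'}$ with $M'=\mathcal{O}(M)$. These two requirements are incompatible: a fixed finite linear combination of dilated/shifted sigmoids cannot approximate $\cos(\cdot)$ to \emph{arbitrary} accuracy, so the gadget size $K(\varepsilon)$ must grow as $\varepsilon\to 0$. With $\varepsilon=M^{-1/2}$ you get $M'=M\cdot K(M^{-1/2})\gg M$, and the final rate in terms of $M'$ degrades strictly below $(M')^{-1/2}$. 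The issue you flag (frequency-independent coefficient bounds) is real but secondary; the primary obstruction is that ``$\mathcal{O}(1)$ neurons with error $\varepsilon$'' is self-contradictory once $\varepsilon$ is tied to $M$.

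The fix used in \cite{Siegel2020_1} is to avoid the replacement step altogether: one first writes each plane wave as an \emph{integral} over shifted $\sigma$-ridge functions (using that $\widehat\sigma$ is nonvanishing at some frequency, which holds for all four listed activations), substitutes this into the Fourier representation of $f$, and obtains $f$ directly as an integral over the dictionary $\{\sigma(w^\top x+b)\}$ with total mass controlled by $\|f\|_{\mathcal{B}^{s+1}}$. Maurey sampling is then applied once, in $H^s(\Omega)$, to this $\sigma$-dictionary, yielding an element of $\mathcal{F}_{2,M}$ with the stated $M^{-1/2}$ rate in a single stroke. If you restructure your argument this way---integral representation in $\sigma$-atoms \emph{before} sampling, rather than cosine sampling followed by finite replacement---the proof goes through.
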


\subsubsection{Continuous extension in $\mathbb{R}^d$}
We should be careful that the PDE solution is spatially defined in a bounded domain $\Omega$ rather than $\mathbb{R}^d$. To apply the preceding approximation property, we choose to use extension approaches to extend the continuous solutions from $\overline{\Omega}\times[0,T]$ to $\mathbb{R}^d\times[0,T]$. Denote $C^k_{0^-}([0,T]):=\{f\in C^k([0,T]):f(0)=0\}$ (we omit $k$ if $k=0$). The existence of continuous extension is given by the following lemma.
\begin{lemma}\label{lem05}
Let $f(x,t)\in C^k_{0^-}(0,T;C^\gamma(\overline{\Omega}))$ with some $\gamma\geq0$, there exists some extension $f_\ext\in C^k_{0^-}(0,T;C_\tc^\gamma(\mathbb{R}^d))$ in the sense that $f_\ext(x,t)=f(x,t)$ if $(x,t)\in \overline{\Omega}\times[0,T]$.
\end{lemma}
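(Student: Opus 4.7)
The plan is to construct the extension by composing a linear spatial extension operator with a smooth cutoff, done uniformly in $t$, so that time regularity and the vanishing initial condition are automatically preserved. Concretely, I would first invoke a bounded linear $C^\gamma$-extension operator $E:C^\gamma(\overline{\Omega})\to C^\gamma(\mathbb{R}^d)$ with $(Eg)|_{\overline{\Omega}}=g$ for all $g$. For integer $\gamma$ and smooth $\partial\Omega$ this is Whitney's extension theorem; for Lipschitz boundaries one can use Stein's total extension operator, which is simultaneously bounded on all $C^\gamma$ (and on Sobolev) scales. The key features are linearity and a bound $\|Eg\|_{C^\gamma(\mathbb{R}^d)}\leq C\|g\|_{C^\gamma(\overline{\Omega})}$.

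Next, I would fix a smooth cutoff $\chi\in C^\infty_\tc(\mathbb{R}^d)$ with $\chi\equiv 1$ on a neighborhood of $\overline{\Omega}$, and define
\begin{equation*}
f_\ext(x,t):=\chi(x)\,E[f(\cdot,t)](x),\qquad (x,t)\in\mathbb{R}^d\times[0,T].
\end{equation*}
I would then verify the four required properties in turn. On $\overline{\Omega}\times[0,T]$, $\chi\equiv 1$ and $E[f(\cdot,t)](x)=f(x,t)$, so $f_\ext$ coincides with $f$ there. For every fixed $t$, $f_\ext(\cdot,t)$ is supported in $\mathrm{supp}(\chi)$, hence compactly supported, and lies in $C^\gamma(\mathbb{R}^d)$ since the product of a $C^\infty_\tc$ function with a $C^\gamma$ function is $C^\gamma_\tc$. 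At $t=0$, $f(\cdot,0)\equiv 0$, so by linearity $E[f(\cdot,0)]=E[0]=0$ and thus $f_\ext(\cdot,0)\equiv 0$, as required by the subscript $0^-$.

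For the regularity in $t$, the linearity and boundedness of $E$ give
\begin{equation*}
\frac{E[f(\cdot,t+h)]-E[f(\cdot,t)]}{h}=E\!\left[\frac{f(\cdot,t+h)-f(\cdot,t)}{h}\right],
\end{equation*}
and the $C^\gamma(\mathbb{R}^d)$-norm of the right-hand side is controlled by the $C^\gamma(\overline{\Omega})$-norm of the difference quotient of $f$, which converges to $\partial_t f(\cdot,t)$ by hypothesis. Iterating this argument and using continuity of $\partial_t^j f$ in $t$ with values in $C^\gamma(\overline{\Omega})$ for $0\leq j\leq k$, we obtain $\partial_t^j f_\ext(x,t)=\chi(x)\,E[\partial_t^j f(\cdot,t)](x)$, which is continuous in $t$ into $C^\gamma_\tc(\mathbb{R}^d)$. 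This yields $f_\ext\in C^k_{0^-}(0,T;C^\gamma_\tc(\mathbb{R}^d))$.

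The only delicate point, and therefore the main obstacle, is securing an extension operator that is simultaneously linear, bounded on $C^\gamma$, and independent of $t$. A pointwise-in-$t$ application of Tietze's theorem produces extensions for each time slice but without the linearity needed to transfer time regularity or the homogeneous initial condition. This is why one must invoke a linear construction such as Whitney's or Stein's; once it is in hand, the compact support and the $C^k_{0^-}$ time regularity follow mechanically from the cutoff and from linearity as above.
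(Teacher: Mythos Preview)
Your proof is correct and takes a genuinely different route from the paper. The paper expands $f$ in the Schauder basis $\{\phi^{(1)}_n\}_{n\geq1}$ for $C_{0^-}([0,T])$, writes $f(x,t)=\sum_n \tilde f_n(x)\phi^{(1)}_n(t)$ with $\tilde f_n\in C^\gamma(\overline\Omega)$, extends each coefficient to $\tilde f_{n,\ext}\in C^\gamma_\tc(\mathbb{R}^d)$ via Whitney, and resums to obtain $f_\ext$. You instead apply one linear bounded extension operator $E$ slicewise in $t$, multiplied by a fixed cutoff $\chi$, and transfer the $C^k$ time regularity through $E$ by linearity and continuity. Your approach is more direct and sidesteps a point the paper leaves implicit, namely the convergence of the resummed series $\sum_n \tilde f_{n,\ext}\phi^{(1)}_n$ in $C^k(0,T;C^\gamma_\tc(\mathbb{R}^d))$; without a \emph{linear} bounded extension (which the paper does not explicitly invoke), the individually extended coefficients need not be controlled uniformly enough to guarantee convergence. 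Conversely, the paper's basis expansion makes the initial condition $f_\ext(\cdot,0)=0$ automatic from $\phi^{(1)}_n(0)=0$, whereas you obtain it from $E[0]=0$; both are clean. Your identification of the ``delicate point''---that Tietze alone fails and one must use a linear (Whitney/Stein) operator---is exactly right and is in fact the missing justification in the paper's own argument.
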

\begin{proof}
Note that $\{\phi^{(1)}_n\}_{n=1}^\infty$ forms a Schauder basis for $C_{0^-}([0,T])$, so $f$ has an expansion $f(x,t)=\sum_{n=1}^\infty \tilde{f}_n(x)\phi^{(1)}_n(t)$ with $\tilde{f}_n(x)\in C^\gamma(\overline{\Omega})$. Now given a closed subset $\Lambda\subset\mathbb{R}^d$ satisfying $\overline{\Omega}\subset\subset\Lambda$, using continuous extension (e.g., Whitney extension theorem) we can always find some $\tilde{f}_{n,\ext}\in C_\tc^\gamma(\mathbb{R}^d)$ such that $\tilde{f}_{n,\ext}=\tilde{f}_n$ in $\overline{\Omega}$ and $\tilde{f}_{n,\ext}=0$ in $\mathbb{R}^d\backslash\Lambda$ for all $n$. Then the desired extension is formed by $f_\ext(x,t):=\sum_{n=1}^\infty \tilde{f}_{n,\ext}(x)\phi^{(1)}_n(t)$. It is clear that $f_\ext(x,t)=f(x,t)$ in $\overline{\Omega}\times[0,T]$. Moreover, $f_\ext(x,0)=0$ for all $x\in\overline{\Omega}$, and $f_\ext$ is $C^k$ in $t$ follows that $f(x,t)$ is $C^k$ in $t$.
\end{proof}

Next, let us define a special norm for functions in $C(0,T;C_\tc^\gamma(\mathbb{R}^d))$. Recall that $\{\tilde{L}_n:=L(\frac{2t}{T}-1)\}_{n=0}^\infty$ forms a Schauder basis for $C([0,T])$. So, given $f\in C(0,T;C_\tc^\gamma(\mathbb{R}^d))$, $f$ has an expansion $f(x,t)=\sum_{n=0}^\infty \tilde{f}_n(x)\tilde{L}_n(t)$. We define
\begin{equation}
\|f\|_{L^2_*}:=\left(\sum_{n=0}^\infty\int_{\mathbb{R}^d}|\tilde{f}_n|^2 \td x\right)^{1/2}.
\end{equation}
Note that $\{\tilde{f}_n\}$ are the coefficients associated with the $L^2$-orthogonal basis $\{\tilde{L}_n\}$ in time variable, so $\|\cdot\|_{L^2_*}$ is actually a weighted $L^2$ norm over $\mathbb{R}^d\times[0,T]$ (but not the standard $L^2$ norm because $\{\tilde{L}_n\}$ is not orthonormal). We show that $\|f\|_{L^2_*}$ is always bounded by the standard $L^2$ norm of $\partial_t^2f$ if $f$ is $C^2$ smooth in $t$ by the following lemma.
\begin{lemma}\label{lem03}
Suppose $f\in C^2(0,T;C_\tc(\mathbb{R}^d))$ with $\gamma\geq0$, it holds that
\begin{equation}\label{59}
\|f\|_{L^2_*}^2\leq\frac{5T^3}{4}\|\partial_t^2f\|_{L^2(\mathbb{R}^d\times[0,T])}^2<\infty.
\end{equation}
\end{lemma}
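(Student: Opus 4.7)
The plan is to estimate each Legendre coefficient $\tilde{f}_n(x)$ pointwise in $x$, express it as an integral against $\partial_t^2 f$, and then sum in $n$ and integrate in $x$. The key construction is an explicit twice-antiderivative $\tilde{Q}_n$ of $\tilde{L}_n$ that, together with its first derivative, vanishes at both endpoints $\{0,T\}$. Using the Legendre identity \eqref{62} together with the endpoint values $\tilde{L}_k(0)=(-1)^k$ and $\tilde{L}_k(T)=1$ obtained from \eqref{07}, I first define the first primitive $q_n(t) := \frac{T}{2(2n+1)}\bigl(\tilde{L}_{n+1}(t)-\tilde{L}_{n-1}(t)\bigr)$, which satisfies $q_n'=\tilde{L}_n$ and $q_n(0)=q_n(T)=0$ for all $n\geq 1$ (the alternating signs cancel). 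Iterating the same integration formula once more yields
\begin{equation*}
\tilde{Q}_n(t) := \frac{T^2}{4(2n+1)}\left[\frac{\tilde{L}_{n+2}(t)-\tilde{L}_n(t)}{2n+3}-\frac{\tilde{L}_n(t)-\tilde{L}_{n-2}(t)}{2n-1}\right],
\end{equation*}
a primitive of $q_n$ which additionally satisfies $\tilde{Q}_n(0)=\tilde{Q}_n(T)=0$ whenever $n\geq 2$.

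For the high modes $n\geq 2$, applying integration by parts twice to $\int_0^T f(x,t)\tilde{L}_n(t)\,\td t = \int_0^T f(x,t)\tilde{Q}_n''(t)\,\td t$ eliminates both boundary contributions $[f\tilde{Q}_n']_0^T$ and $[\partial_t f\,\tilde{Q}_n]_0^T$, giving the clean representation $\tilde{f}_n(x)=\frac{2n+1}{T}\int_0^T \partial_t^2 f(x,t)\,\tilde{Q}_n(t)\,\td t$. Cauchy--Schwarz in $t$ combined with the orthogonality relation $\|\tilde{L}_k\|_{L^2([0,T])}^2 = T/(2k+1)$ allows one to compute $\|\tilde{Q}_n\|_{L^2}^2 = O(T^5/n^5)$, so that $(2n+1)^2\|\tilde{Q}_n\|^2/T^2 = O(T^3/n^3)$. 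The tail sum $\sum_{n\geq 2}|\tilde{f}_n(x)|^2$ therefore converges and is controlled by a constant times $T^3\|\partial_t^2 f(x,\cdot)\|_{L^2([0,T])}^2$.

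The two low modes $n=0,1$ are handled separately using the zero initial data $f(x,0)=\partial_t f(x,0)=0$ implicit in the setting in which the lemma is applied: the Taylor-remainder representation $f(x,t)=\int_0^t (t-s)\partial_s^2 f(x,s)\,\td s$ lets me rewrite $\tilde{f}_0(x)=T^{-1}\int_0^T f(x,t)\,\td t$ and $\tilde{f}_1(x)=3T^{-1}\int_0^T f(x,t)(2t/T-1)\,\td t$ as explicit integral transforms of $\partial_s^2 f(x,\cdot)$ against quartic/cubic polynomial kernels in $s$, and Cauchy--Schwarz bounds $|\tilde{f}_0(x)|^2$ and $|\tilde{f}_1(x)|^2$ each by $O(T^3)\|\partial_t^2 f(x,\cdot)\|_{L^2([0,T])}^2$. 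Summing the pointwise bounds over all $n\geq 0$, integrating in $x$ and applying Fubini--Tonelli yields $\|f\|_{L^2_*}^2 \leq C T^3 \|\partial_t^2 f\|_{L^2(\mathbb{R}^d\times[0,T])}^2$; the finiteness of the right-hand side is guaranteed by compact support in $x$ combined with $C^2$-regularity in $t$. The main obstacle is the bookkeeping needed to verify that the accumulated constant does not exceed $5/4$: this demands explicit summation of the tail series from the $n\geq 2$ contribution together with careful tracking of the low-mode constants from the $n=0,1$ contribution.
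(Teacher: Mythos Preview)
Your approach shares the paper's core mechanism---two integrations by parts driven by the Legendre identity~\eqref{62}, with the boundary terms killed by $\tilde L_{k+1}(\tau)-\tilde L_{k-1}(\tau)=0$ at $\tau\in\{0,T\}$---but the execution differs in two respects. First, the paper bounds $|L_k|\le 1$ pointwise after each integration by parts to obtain $\bigl|\int_0^T f\,\tilde L_n\,\td t\bigr|\le 4\bigl(\tfrac{T}{4n+2}\bigr)^2\int_0^T|\partial_t^2 f|\,\td t$ (this is~\eqref{53}), and only then applies Cauchy--Schwarz to the $L^1$ norm of $\partial_t^2 f$; the constant $5/4$ drops out immediately from $\sum_{n\ge0}(2n+1)^{-2}=\pi^2/8$. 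You instead package both integrations into the explicit second antiderivative $\tilde Q_n$ and Cauchy--Schwarz directly against its exact $L^2$ norm computed by orthogonality, which yields a sharper $O(n^{-3})$ summand at the price of heavier bookkeeping for the final constant. Second, you correctly single out the modes $n=0,1$: the antiderivative cancellations genuinely fail there, and the paper's derivation of~\eqref{53} tacitly requires $n\ge2$ (indeed the inequality as stated is violated by any $f$ linear in $t$, so some vanishing at $t=0$ is necessary). One caveat: the extra hypothesis $\partial_t f(x,0)=0$ you invoke for the low modes is not actually supplied by the $C^2_{0^-}$ framework in which the lemma is later applied; a safer fix is to keep the one surviving boundary term and absorb it into a lower-order norm of $f$, or simply bound $\tilde f_0,\tilde f_1$ by $\|f\|_{L^2}$ directly.
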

\begin{proof}
By the orthogonality relation \eqref{24}, we have
\begin{equation}\label{52}
\tilde{f}_n=\frac{\int_0^T f(x,t)\tilde{L}_n(t)\td t}{\int_0^T|\tilde{L}_n(t)|^2\td t}=\frac{2n+1}{T}\int_0^T f(x,t)L_n(\frac{2t}{T}-1)\td t.
\end{equation}
Using the identity \eqref{62} and integration by parts, we can obtain
\begin{multline}\label{50}
\left|\int_0^T f(x,t)L_n(\frac{2t}{T}-1)\td t\right|=\left|\frac{T}{4n+2}\int_0^T\partial_t f(x,t)\left(L_{n+1}(\frac{2t}{T}-1)-L_{n-1}(\frac{2t}{T}-1)\right)\td t\right|\\
\leq\frac{T}{4n+2}\left(\left|\int_0^T\partial_t f(x,t)L_{n+1}(\frac{2t}{T}-1)\td t\right|+\left|\int_0^T\partial_t f(x,t)L_{n-1}(\frac{2t}{T}-1)\td t\right|\right).
\end{multline}
Following \eqref{50}, we repeat doing the preceding step with integration by parts on the last two terms, then it leads to
\begin{equation}\label{53}
\left|\int_0^T f(x,t)L_n(\frac{2t}{T}-1)\td t\right|\leq 4\left(\frac{T}{4n+2}\right)^2\int_0^T|\partial_t^2 f(x,t)|\td t,
\end{equation}
in which we use the boundedness $|L_n(x)|\leq1$ for all $x\in[-1,1]$ and all $n$. Combining \eqref{52} and \eqref{53} and using Cauchy-Schwarz inequality, we have
\begin{equation}
|\tilde{f}_n|^2\leq\frac{T^3}{(2n+1)^2}\int_0^T|\partial_t^2 f(x,t)|^2\td t,
\end{equation}
which completes the proof with the relation $\sum_{n=0}^\infty(2n+1)^{-2}<5/4$.
\end{proof}

Using the concept of continuous extension and the $L^2_*$ norm, we can define another norm for functions in $C_{0^-}(0,T;C^\gamma(\overline{\Omega}))$ that will be used in later analysis. Specifically, for each $f\in C_{0^-}(0,T;C^\gamma(\overline{\Omega}))$, we define
\begin{equation}\label{54}
\|f\|_\dagger:=\inf_{f_\ext}\left(\|f_\ext\|_{L^2_*}^2+\|D^\gamma f_\ext\|_{L^2_*}^2\right)^{1/2},
\end{equation}
where the infimum is taken over all possible extensions $f_\ext$ of $f$ in the space $C_{0^-}(0,T;C_\tc^\gamma(\mathbb{R}^d))$. By Lemma \ref{lem05}, such extension always exists, so $\|f\|_\dagger$ is well-defined. And by Lemma \ref{lem03}, $\|f\|_\dagger$ is finite if $f\in C^2(0,T;C^\gamma(\overline{\Omega}))$.

\subsubsection{Error estimates}
Now let us estimate how small $J^\tP$ can be minimized in the hypothesis space $\mathcal{S}=\left\{\hw=\nu\hphi,~\hphi\in\mathcal{F}_{2,M}\right\}$, and thereafter derive the final solution error. Before presenting the estimate, we discuss the existence of the boundary governing function $\nu(x)$. The following result shows we can always find some $\nu$ that is positive everywhere and vanishes on the boundary if the domain is smooth enough.

\begin{lemma}\label{lem02}
Let $\Omega$ be a $C^\gamma$ domain in $\mathbb{R}^d$ with $\gamma\geq2$. Then there exists some $\nu\in C_0^\gamma(\overline{\Omega})$ such that $\nu>0$ in $\Omega$.
\end{lemma}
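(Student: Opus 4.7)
The plan is to build $\nu$ by gluing the (signed) distance function to the boundary, which handles the vanishing property, with a strictly positive function on the bulk of $\Omega$, which handles the positivity away from $\partial\Omega$. The smoothness of $\Omega$ will be used precisely to guarantee that the distance function is $C^\gamma$ in a tubular neighborhood of $\partial\Omega$.

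First I would invoke the standard regularity result for the distance function: because $\partial\Omega$ is a closed embedded hypersurface of class $C^\gamma$ with $\gamma\geq 2$, there exists $\epsilon>0$ such that the map $d(x):=\operatorname{dist}(x,\partial\Omega)$ belongs to $C^\gamma(U_\epsilon)$, where $U_\epsilon:=\{x\in\overline{\Omega}:d(x)<\epsilon\}$. (The classical proof uses the nearest point projection onto $\partial\Omega$, which inherits the $C^{\gamma-1}$ regularity of the Gauss map, so $d$ picks up one extra derivative.) Moreover $d>0$ on $\Omega\cap U_\epsilon$ and $d=0$ on $\partial\Omega$.

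Next I would choose a $C^\infty$ cutoff $\eta:\overline{\Omega}\to[0,1]$ with $\eta\equiv 1$ on $U_{\epsilon/3}$ and $\eta\equiv 0$ on $\overline{\Omega}\setminus U_{2\epsilon/3}$; such an $\eta$ exists by a standard mollification/partition-of-unity construction on $\mathbb{R}^d$ restricted to $\overline{\Omega}$. Then I would set
\begin{equation}
\nu(x):=\eta(x)\,d(x)+\bigl(1-\eta(x)\bigr).
\end{equation}
On $U_{\epsilon/3}$ one has $\nu=d\in C^\gamma$; on $\overline{\Omega}\setminus U_{2\epsilon/3}$ one has $\nu\equiv 1\in C^\infty$; on the transition annulus $U_{2\epsilon/3}\setminus U_{\epsilon/3}$ both $\eta$ and $d$ are $C^\gamma$, so $\nu\in C^\gamma$ globally on $\overline{\Omega}$. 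Positivity is immediate: if $\eta(x)<1$ then $1-\eta(x)>0$ and $\eta(x)d(x)\geq 0$, giving $\nu(x)>0$; if $\eta(x)=1$ then $x\in U_{\epsilon/3}\subset\overline{\Omega}$, and $x\in\Omega$ forces $d(x)>0$, hence $\nu(x)=d(x)>0$. Finally, on $\partial\Omega$ we have $\eta=1$ and $d=0$, so $\nu=0$, which places $\nu$ in $C_0^\gamma(\overline{\Omega})$.

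The only nontrivial ingredient is the $C^\gamma$ regularity of the distance function in a tubular neighborhood of a $C^\gamma$ boundary, which is exactly where the hypothesis $\gamma\geq 2$ is used (for $\gamma=1$ the distance function is merely Lipschitz). Once that fact is in hand, the cutoff-and-glue construction above is routine, so I expect no further obstruction.
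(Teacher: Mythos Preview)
Your proof is correct and takes a genuinely different route from the paper's. The paper solves the Dirichlet problem $-\Delta\nu=1$ in $\Omega$, $\nu=0$ on $\partial\Omega$, and then invokes global elliptic regularity (Gilbarg--Trudinger, Theorem~6.19) to obtain $\nu\in C^\gamma(\overline{\Omega})$ and the strong maximum principle to obtain $\nu>0$ in $\Omega$. Your argument is more elementary and entirely constructive: it relies only on the $C^\gamma$ regularity of the distance function in a tubular neighborhood (standard, e.g.\ Gilbarg--Trudinger, Lemma~14.16) and a cutoff, bypassing any PDE theory. Both constructions produce a $\nu$ vanishing to first order on $\partial\Omega$ (yours because $\nu=d$ near $\partial\Omega$; the paper's via Hopf's lemma), which matters later when the paper assumes $u/\nu<\infty$ on $\partial\Omega$. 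One small slip worth tightening: the implication ``$\eta(x)=1\Rightarrow x\in U_{\epsilon/3}$'' is not what you set up (you only required $\eta\equiv1$ on $U_{\epsilon/3}$, not the converse), but the positivity conclusion is unaffected since $\eta(x)=1$ forces $\nu(x)=d(x)>0$ for any $x\in\Omega$.
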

\begin{proof}
We give a constructive proof. Consider the elliptic equation
\begin{equation}
\begin{cases}-\Delta_x \nu(x)=1,\quad x\in\Omega,\\\nu(x)=0,\quad x\in\partial\Omega,\end{cases}
\end{equation}
which admits a unique solution since $\Omega$ is $C^\gamma$ smooth. By the global regularity theorem for strictly elliptic equations (e.g., Theorem 6.19, Page. 111 in \cite{Gilbarg2015}), the solution $\nu$ is $C^\gamma$ smooth in $\overline{\Omega}$. And by the maximum principle of strictly elliptic operators, $\nu$ only takes its minimum on $\partial\Omega$, so $\nu>0$ in $\Omega$.
\end{proof}

For simplicity, we can normalize $\nu$ by multiplying some positive constant so that
\begin{equation}\label{37}
\max\{\|\nu\|_\infty,\||\nabla_x\nu|^2\|_\infty,\|\Delta_x\nu\|_\infty\}=1.
\end{equation}

Now we give the following error estimates.
\begin{theorem}\label{thm02}
Let $\gamma>2/d+3$ be an integer. Let $u$ be the classical solution of \eqref{02} with $\mathcal{L}_x=-\Delta_x$ and $\Omega$ being $C^\gamma$ smooth. Let $\nu\in C_0^\gamma(\overline{\Omega})$ satisfy $\nu>0$ in $\Omega$ and the normalization condition \eqref{37} (Lemma \ref{lem02} implies the existence of $\nu$). Moreover, let $m,m'\geq0$ be two integers. Suppose $f\in\mathcal{W}^m(0,T;L^2(\Omega))$, $u\in C^{\max(2,m')}(0,T;C_0^\gamma(\overline{\Omega}))$ and $u/\nu<\infty$ on $\partial\Omega$ for each $t\in[0,T]$. If $\{\hw_n\}_{n=1}^N$ solves the minimization \eqref{22} with hypothesis space $\mathcal{S}=\left\{\hw=\nu\hphi,~\hphi\in\mathcal{F}_{2,M}\right\}$, where the activation function $\sigma$ of $\mathcal{F}_{2,M}$ satisfies the hypothesis of Lemma \ref{lem01}. then $\hu_N(x,t):=\sum_{n=1}^N\hw_n(x)\phi_n^{(1)}(t)$ satisfies the estimate
\begin{multline}\label{58}
\underset{0\leq t\leq T}{\sup}\|u(\cdot,t)-\hu_N(\cdot,t)\|_{H^1_0(\Omega)}+\|u-\hu_N\|_{L^2(0,T;H^2(\Omega))}+\|\partial_t(u-\hu_N)\|_{L^2(0,T;L^2(\Omega))}\\
\leq c\left(d^\frac{1}{2}N^2M^{-\frac{1}{2}}\|u/\nu\|_\dag +d^\frac{1}{2}N^{-\frac{3}{2}-m'}\|\partial_t^{m'}(u/\nu)\|_{L_{m'}^2(0,T;H^2(\Omega))}+N^{-m} \|\partial_t^m f\|_{L_m^2(0,T;L^2(\Omega))}\right)
\end{multline}
provided that $N\gg m,m'$, where $c$ only depends on $\Omega$, $T$, $\gamma$, $\sigma$, $\lambda$, $m$ and $m'$.
\end{theorem}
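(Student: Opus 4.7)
The plan is to combine the a posteriori bound from Theorem \ref{thm01} with an explicit upper bound on the minimal loss $\delta:=J^\tP[\hw_1,\ldots,\hw_N]$. Theorem \ref{thm01} already yields
\[
\text{LHS of \eqref{58}}\;\le\;c\bigl(N^{3/2}\sqrt{\delta}+N^{-m}\|\partial_t^m f\|_{L_m^2(0,T;L^2(\Omega))}\bigr),
\]
so the third term of \eqref{58} is directly accounted for, and it remains to exhibit an admissible tuple $(\hw_n)\in\mathcal S^N$ with $\sqrt{\delta}\le c\,d^{1/2}\bigl(N^{1/2}M^{-1/2}\|u/\nu\|_\dagger+N^{-3-m'}\|\partial_t^{m'}(u/\nu)\|_{L_{m'}^2(0,T;H^2(\Omega))}\bigr)$.

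Construction. Put $\tdu:=u/\nu$; the hypothesis $u/\nu<\infty$ on $\partial\Omega$ together with the $C^\gamma$ regularity of $u,\nu$ guarantees that $\tdu$ is $C^{\max(2,m')}$ in $t$ with $\tdu(\cdot,0)=0$ in $\Omega$ and $C^{\gamma-1}$ (or better) in $x$. Lemma \ref{lem05} gives an extension $\tdu_\ext\in C^{\max(2,m')}_{0^-}(0,T;C_\tc^\gamma(\mathbb R^d))$, which I expand in the shifted Legendre basis, $\tdu_\ext(x,t)=\sum_{n\ge 0}\tilde v_n(x)\tilde L_n(t)$. Applying Lemma \ref{lem04} with $s=3$ to each coefficient (legitimate since $\gamma>d/2+3$), summing over $n$ and infimizing over extensions yields $\sum_n\|\tilde v_n\|_{\mathcal B^3}^2\le c\|\tdu\|_\dagger^2$. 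Lemma \ref{lem01} then provides two-layer FNNs $\hphi_n\in\mathcal F_{2,M}$ with $\|\hphi_n-\tilde v_n\|_{H^2(\Omega)}\le c|\Omega|^{1/2}\|\tilde v_n\|_{\mathcal B^3}M^{-1/2}$. Using $\tilde L_k=\phi_{k+1}^{(1)}-\tilde L_{k-1}$ recursively and the initial-value constraint $\tdu(\cdot,0)=0$, I convert these into FNNs $\hphi_n^\sharp\in\mathcal F_{2,M}$ indexed by $n=1,\ldots,N$, and set $\hw_n:=\nu\hphi_n^\sharp\in\mathcal S$ and $\hu_N:=\sum_{n=1}^N\hw_n\phi_n^{(1)}$.

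Loss estimate. By the PDE, $\hp:=\partial_t\hu_N+\mathcal L_x\hu_N-f=\partial_t(\hu_N-u)+\mathcal L_x(\hu_N-u)$. Split $\hu_N-u=\nu E_{\mathrm{FNN}}+\nu E_{\mathrm{trunc}}$ into an FNN piece (from $\hphi_n^\sharp-v_n^\sharp$) and a time-truncation piece (the Legendre tail $n\ge N$ of $\tdu_\ext|_\Omega$). For the FNN piece, Proposition \ref{prop01} (sparse $A_N^{(1)},B_N^{(1)}$), the normalization \eqref{37}, and Cauchy--Schwarz give $\sum_j r_j\|R_j^{(1,\mathrm{FNN})}\|_{L^2(\Omega)}^2\le cNM^{-1}\|\tdu\|_\dagger^2$, where the factor $N$ arises because the dense first row of $A_N^{(1)}$ aggregates all $N$ FNN errors in a single $R_j$. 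For the truncation piece, the key observation is that $\mathcal L_x E_{\mathrm{trunc}}$ has only Legendre modes $\ge N$ in $t$, hence is $L^2$-orthogonal to $P^{N-1}([0,T])$ and contributes nothing to $R_j^{(1)}=(\pi^{N-1}_{[0,T]}\hp,\psi_j^{(1)})$; the remaining $\partial_t E_{\mathrm{trunc}}$ is handled by iterated integration by parts as in \eqref{60}--\eqref{61}, gaining a factor $(2N+1)^{-1}$ per step, and then Corollary \ref{cor01} in $H^2(\Omega)$ norm yields $\sum_j r_j\|R_j^{(1,\mathrm{trunc})}\|_{L^2(\Omega)}^2\le cN^{-6-2m'}\|\partial_t^{m'}\tdu\|_{L_{m'}^2(0,T;H^2(\Omega))}^2$. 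The regularization $\lambda N^{-4}\|\Delta_x\hw_N\|_{L^2(\Omega)}^2$ decays super-algebraically by the same argument from \eqref{60}--\eqref{61} applied to $\hu_N$, so it is absorbed into the two residual bounds.

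The chief obstacle is the sharp time-truncation estimate producing the exponent $N^{-3-m'}$ in $\sqrt{\delta}$: a naïve Legendre bound gives only $N^{-m'}$, and extracting the additional $N^{-3}$ requires carefully chaining the $\pi^{N-1}_{[0,T]}$-orthogonality of $\mathcal L_x E_{\mathrm{trunc}}$ with the derivative-shift identity \eqref{62} and iterated integration by parts in $t$, tracking the factors $(2k+1)^{-1}$ up to $k=N$. A subsidiary point is to verify that the lower-triangular basis change from $\tilde L_n$ to $\phi_n^{(1)}$ does not inflate $\sum_n\|\hphi_n^\sharp\|_{\mathcal B^3}^2$ beyond $c\|\tdu\|_\dagger^2$; since the coupling is nearest-neighbour with $O(1)$ amplitude, this follows by the triangle inequality. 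Substituting the resulting loss bound into Theorem \ref{thm01} then yields \eqref{58}.
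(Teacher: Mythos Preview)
Your overall strategy---invoke Theorem \ref{thm01} and then bound the minimal loss $\delta$ by exhibiting a good trial tuple in $\mathcal{S}^N$---is exactly what the paper does. The differences lie in the construction of that tuple and in where the $m'$-term actually originates.

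The paper's construction is more direct than yours. Rather than expanding $u/\nu$ in the $\tilde L_n$-basis and then converting to $\phi_n^{(1)}$, the paper expands $u/\nu=\sum_{n\ge 1}\tdu_n^*(x)\,\phi_n^{(1)}(t)$ \emph{directly} in the trial basis (legitimate because $u(\cdot,0)=0$), takes two-layer FNNs $\hpsi_n\in\mathcal{F}_{2,M}$ approximating $\tdu_n^*$ in $H^2(\Omega)$ via Lemmas \ref{lem04}--\ref{lem01}, and sets $\hw_n=\nu\hpsi_n$. Substituting $f=\partial_t u-\Delta_x u$ into $J^{\tP}$ and using the sparsity pattern of $A_N^{(1)},B_N^{(1)}$ from Proposition \ref{prop01}, each $R_j^{(1)}$ collapses to a short linear combination of the \emph{differences} $\nu(\hpsi_n-\tdu_n^*)$ and $\Delta_x\bigl(\nu(\hpsi_n-\tdu_n^*)\bigr)$; for $j\ge 2$ the tail $n>N$ never enters because $a_{jn}^{(1)}=b_{jn}^{(1)}=0$ there. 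The entire residual block is then bounded by $c\,dNM^{-1}\|u/\nu\|_\dagger^2$, the factor $N$ coming (as you noted) from the dense first row of $A_N^{(1)}$. No separate ``truncation'' residual $E_{\mathrm{trunc}}$ is needed, and your basis-conversion step $\hphi_n\mapsto\hphi_n^\sharp$ is avoided altogether.

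The genuine gap in your proposal is the treatment of the regularization term $\lambda N^{-4}\|\Delta_x\hw_N\|_{L^2(\Omega)}^2$. You dismiss it as ``super-algebraically decaying'' by appealing to \eqref{60}--\eqref{61}, but that argument controls $|\mathcal{L}_x\hw_N(x)|$ by $CN^{1-p}\|\mathcal{L}_x\hu_N(x,\cdot)\|_{H^p([0,T])}$, and for your constructed trial element (a degree-$N$ polynomial in $t$ with FNN coefficients) there is no $N$-uniform bound on the $H^p$-norms in $t$. In the paper this term is not a nuisance to be discarded but precisely the \emph{source} of the middle contribution in \eqref{58}: one splits
\[
\|\Delta_x(\nu\hpsi_N)\|^2\;\lesssim\;\|\Delta_x(\nu(\hpsi_N-\tdu_N^*))\|^2+\|\Delta_x(\nu\tdu_N^*)\|^2,
\]
absorbs the first piece into the FNN error, and bounds the second via Corollary \ref{cor01}, using that $\tdu_N^*$ is (up to a nearest-neighbour shift) the $N$-th Legendre coefficient of $u/\nu$. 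This yields $\|\Delta_x(\nu\tdu_N^*)\|_{L^2(\Omega)}^2\lesssim dN^{1-2m'}\|\partial_t^{m'}(u/\nu)\|_{L_{m'}^2(0,T;H^2(\Omega))}^2$, which after the prefactor $N^{-4}$ and the $N^{3/2}$ from Theorem \ref{thm01} produces the $d^{1/2}N^{-3/2-m'}$ term. Your attribution of this term to a ``truncation'' contribution in the residuals, with the sketch ``iterated integration by parts gaining $(2N+1)^{-1}$ per step,'' does not go through: in the $\tilde L_n$-expansion one has $E_{\mathrm{trunc}}(\cdot,0)\neq 0$ in general, so the boundary terms at $t=0$ do not vanish and the iterated gain is lost. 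Reworking the argument with the $\phi_n^{(1)}$-expansion and handling the regularization term as above removes both difficulties simultaneously.
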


\begin{proof}
We use the notation $A\lesssim B$ to mean $A\leq cB$ for some constant $c$ only depending on $\Omega$, $T$, $\gamma$, $\sigma$, $\lambda$, $m$ and $m'$.

Denote $Q_T:=\overline{\Omega}\times[0,T]$. Since $u(x,0)=0$ in $\overline{\Omega}$ and $\{\phi^{(1)}_n\}_{n=1}^\infty$ forms a Schauder basis for $C^2_{0^-}([0,T])$, we can expand $u$ in $Q_T$ as $u=\sum_{n=1}^\infty \tdu_n(x)\phi^{(1)}_n(t)$ with $\tdu_n\in C_0^\gamma(\overline{\Omega})$. Denote $\tdu^*_n(x):=\tdu_n(x)/\nu(x)$, then the division $u/\nu=\sum_{n=1}^\infty \tdu^*_n(x)\phi^{(1)}_n(t)$ is well-defined and continuous in $Q_T$ since $u/\nu$ is finite on $\partial\Omega$ for each $t$ and $\nu\neq0$ in $\Omega$. By the hypothesis $\nu\in C_0^\gamma(\overline{\Omega})$, we have $\tdu^*_n\in C^\gamma(\overline{\Omega})$ and hence $u/\nu\in C^2_{0^-}(0,T;C^\gamma(\overline{\Omega}))$.

Now from \eqref{54}, let $u^*_\ext\in C^2_{0^-}(0,T;C_\tc^\gamma(\mathbb{R}^d))$ be some extension of $u/\nu$ such that
\begin{equation}
\|u/\nu\|_\dagger=\left(\|u^*_\ext\|_{L^2_*}^2+\|D^\gamma u^*_\ext\|_{L^2_*}^2\right)^{1/2},
\end{equation}
which is finite by Lemma \ref{lem03}. We expand $u^*_\ext=\sum_{n=1}^\infty\tdu^*_{n,\ext}(x)\phi^{(1)}_n(t)$ with $\tdu^*_{n,\ext}\in C_\tc^\gamma(\mathbb{R}^d)$. It is clear that $\tdu^*_{n,\ext}$ is an extension of $\tdu^*_n$ from $\overline{\Omega}$ to $\mathbb{R}^d$.

Next by Lemma \ref{lem04} and \ref{lem01}, there exists $\hpsi_n\in\mathcal{F}_{2,M}$ such that
\begin{equation}\label{39}
\|\hpsi_n-\tdu^*_n\|_{H^2(\Omega)}^2=\|\hpsi_n-\tdu^*_{n,\ext}\|_{H^2(\Omega)}^2\lesssim M^{-1}\|\tdu^*_{n,\ext}\|_{\mathcal{B}^3}^2\lesssim M^{-1} \int_{\mathbb{R}^d}|\tdu^*_{n,\ext}|^2+\left|D^\gamma \tdu^*_{n,\ext}\right|^2 \td x,
\end{equation}
for $n=1\cdots,N$. Then it follows \eqref{39} that
\begin{multline}\label{56}
\sum_{n=1}^N\|\hpsi_n-\tdu^*_n\|_{H^2(\Omega)}^2\lesssim M^{-1}\sum_{n=1}^\infty \int_{\mathbb{R}^d}\left|\tdu^*_{n,\ext}\right|^2+\left|D^\gamma \tdu^*_{n,\ext}\right|^2 \td x\\
\lesssim M^{-1}\sum_{n=1}^\infty \int_{\mathbb{R}^d}\left|\tdu^*_{n,\ext}+\tdu^*_{n+1,\ext}\right|^2+\left|D^\gamma \tdu^*_{n,\ext}+D^\gamma \tdu^*_{n+1,\ext}\right|^2 \td x,
\end{multline}
where the last inequality follows the fact $\sum_{n=1}^\infty|r_n+r_{n+1}|^2-\sum_{n=1}^\infty|r_n|^2=\left|\sum_{n=1}^\infty r_n\right|^2\geq0$ for all real $r_n$.

On the other hand, using \eqref{55} we have
\begin{equation}
u^*_\ext=\tdu^*_{1,\ext}(x)\tilde{L}_0(t)+\sum_{n=1}^\infty\left(\tdu^*_{n,\ext}(x)+\tdu^*_{n+1,\ext}(x)\right)\tilde{L}_n(t),
\end{equation}
so it follows \eqref{56} that
\begin{equation}
\sum_{n=1}^N\|\hpsi_n-\tdu^*_n\|_{H^2(\Omega)}^2\lesssim M^{-1}\left(\|u^*_\ext\|_{L^2_*}^2+\|D^\gamma u^*_\ext\|_{L^2_*}^2\right)=M^{-1}\|u/\nu\|_\dagger^2.
\end{equation}

Since $\nu\hpsi_n\in\mathcal{S}$, substituting $f=u_t-\Delta_xu=\sum_{n=1}^\infty\nu\tdu^*_n(x)\partial_t\phi^{(1)}_n(t)-\Delta_x(\nu\tdu^*_n(x))\phi^{(1)}_n(t)$ in $J^\tP$ and using Proposition \ref{prop01}, we obtain
\begin{multline}\label{38}
J^\tP[\hw_1,\cdots,\hw_N]\leq J^\tP[\nu\hpsi_1,\cdots,\nu\hpsi_N]\\
=\sum_{j=1}^Nr_j\left\|\sum_{n=1}^Na_{jn}^{(1)}(\nu\hpsi_n-\nu\tdu^*_n)-\sum_{n=1}^Nb_{jn}^{(1)}(\Delta_x(\nu\hpsi_n)-\Delta_x(\nu\tdu^*_n))\right\|_{L^2(\Omega)}^2+\lambda N^{-4}\|\Delta_x(\nu\hpsi_N)\|_{L^2(\Omega)}^2\\
\lesssim ~N\sum_{n=1}^N\|\nu\hpsi_n-\nu\tdu^*_n\|_{L^2(\Omega)}^2+N\|\Delta_x(\nu\hpsi_1)-\Delta_x(\nu\tdu^*_1)\|_{L^2(\Omega)}^2\\
+\sum_{n=1}^N\|\Delta_x(\nu\hpsi_n)-\Delta_x(\nu\tdu^*_n)\|_{L^2(\Omega)}^2+N^{-4}\left(\|\Delta_x(\nu\hpsi_N)-\Delta_x(\nu\tdu^*_N)\|_{L^2(\Omega)}+\|\Delta_x(\nu\tdu^*_N)\|_{L^2(\Omega)}\right)^2\\
\lesssim ~N\sum_{n=1}^N\|\nu\hpsi_n-\nu\tdu^*_n\|_{L^2(\Omega)}^2+N\|\Delta_x(\nu\hpsi_1)-\Delta_x(\nu\tdu^*_1)\|_{L^2(\Omega)}^2\\
+\sum_{n=1}^N\|\Delta_x(\nu\hpsi_n)-\Delta_x(\nu\tdu^*_n)\|_{L^2(\Omega)}^2+N^{-4}\|\Delta_x(\nu\tdu^*_N)\|_{L^2(\Omega)}^2.
\end{multline}
On one hand, \eqref{37} implies that
\begin{gather*}
\|\nu\hpsi_n-\nu\tdu^*_n\|_{L^2(\Omega)}^2\leq\|\hpsi_n-\tdu^*_n\|_{L^2(\Omega)}^2,\\
\|\Delta_x(\nu\hpsi_n)-\Delta_x(\nu\tdu^*_n)\|_{L^2(\Omega)}^2\leq d\|\hpsi_n-\tdu^*_n\|_{H^2(\Omega)}^2,
\end{gather*}
then using \eqref{39} we have
\begin{multline}\label{41}
N\sum_{n=1}^N\|\nu\hpsi_n-\nu\tdu^*_n\|_{L^2(\Omega)}^2+N\|\Delta_x(\nu\hpsi_1)-\Delta_x(\nu\tdu^*_1)\|_{L^2(\Omega)}^2
\\+\sum_{n=1}^N\|\Delta_x(\nu\hpsi_n)-\Delta_x(\nu\tdu^*_n)\|_{L^2(\Omega)}^2
\lesssim dN\sum_{n=1}^N\|\hpsi_n-\tdu^*_n\|_{H^2(\Omega)}^2
\lesssim dNM^{-1}\|u/\nu\|_\dagger^2.
\end{multline}
On the other hand, we note that for $k=0,1,2$,
\begin{multline}
\partial_x^k(\frac{u}{\nu})=\sum_{n=1}^\infty\partial_x^k\tdu^*_n(x)\phi^{(1)}_n(t)=\sum_{n=1}^\infty\partial_x^k\tdu^*_n(x)\left(\tilde{L}_n(t)+\tilde{L}_{n-1}(t)\right)\\
=\partial_x^k\tdu^*_1(x))\tilde{L}_0(t)+\sum_{n=1}^\infty\left(\partial_x^k\tdu^*_n(x)+\partial_x^k\tdu^*_{n+1}(x)\right)\tilde{L}_n(t).
\end{multline}
So for every $x\in\Omega$,
\begin{multline}
\|\pi^{N-1}_{[0,T]}\partial_x^k(\frac{u(x,\cdot)}{\nu(x)})-\partial_x^k(\frac{u(x,\cdot)}{\nu(x)})\|_{L^2([0,T])}^2=\int_0^T\sum_{n=N}^\infty|\partial_x^k\tdu^*_n(x)+\partial_x^k\tdu^*_{n+1}(x)|^2\cdot|\tilde{L}_n(t)|^2\td t\\=\sum_{n=N}^\infty\frac{T}{2n+1}|\partial_x^k\tdu^*_n(x)+\partial_x^k\tdu^*_{n+1}(x)|^2\geq\frac{T}{2N+1}|\partial_x^k\tdu^*_N(x)|^2.
\end{multline}
Since $u\in C^{m'}(0,T;C_0^\gamma(\overline{\Omega}))\subset\mathcal{W}^{m'}(0,T;H^2(\overline{\Omega}))$, by Corollary \ref{cor01}, if $N\gg m'$,
\begin{multline}
|\partial_x^k\tdu^*_N(x)|^2\lesssim N\|\pi^{N-1}_{[0,T]}\partial_x^k(\frac{u(x,\cdot)}{\nu(x)})-\partial_x^k(\frac{u(x,\cdot)}{\nu(x)})\|_{L^2([0,T])}^2\lesssim N^{1-2m'}\|\partial_t^{m'}\partial_x^k(\frac{u(x,\cdot)}{\nu(x)})\|_{L_{m'}^2([0,T])}^2,
\end{multline}
and therefore using \eqref{37} we have
\begin{multline}\label{42}
\|\Delta_x(\nu\tdu^*_N)\|_{L^2(\Omega)}^2\leq\|\tdu^*_N\|_{L^2(\Omega)}^2+\|\nabla_x\tdu^*_N\|_{L^2(\Omega)}^2+\|\Delta_x\tdu^*_N\|_{L^2(\Omega)}^2\\
\lesssim \left(\|\partial_t^{m'}(u/\nu)\|_{L_{m'}(0,T;L^2(\Omega))}^2+\|\nabla_x\partial_t^{m'}(u/\nu)\|_{L_{m'}^2(0,T;L^2(\Omega))}^2+d\sum_{i=1}^d\|\partial_{x_i}^2\partial_t^{m'}(u/\nu)\|_{L_{m'}(0,T;L^2(\Omega))}^2\right)N^{1-2m'}\\
\lesssim dN^{1-2m'}\|\partial_t^{m'}(u/\nu)\|_{L_{m'}^2(0,T;H^2(\Omega))}^2.
\end{multline}
Finally, combining \eqref{38}, \eqref{41}, \eqref{42}, and using Theorem \ref{thm01} completes the proof.
\end{proof}

Theorem \ref{02} implies that the solution error approaches to zero if $M^{1/2}N^{-2}\rightarrow\infty$ and $N\rightarrow\infty$ given that $m\geq1$ and $m'\geq0$. It requires that $M=O(N^p)$ with $p>4$. So in theory the width of the neural network should be much larger than the number of basis in the Galerkin formulation. However, in practical examples (e.g., in Section \ref{sec_examples}), the width getting small errors is much less than the theoretical requirement.

\subsubsection{Long time behavior estimates}\label{sec:long_time_estimates}
Let us further investigate how the solution error behaves as $T$ varies. We do rescale the time variable by taking $t'=t/T$ in \eqref{02}, then the problem becomes
\begin{equation}\label{57}
\begin{cases}
\frac{1}{T}\frac{\partial u(x,Tt')}{\partial t'}+\mathcal{L}_xu(x,Tt')=f(x,Tt'),\quad\text{in}~\Omega\times(0,1],\\
u(x,0) = 0,\quad\text{in}~\Omega,\\
u(x,t') = 0,\quad\text{on}~\partial\Omega\times[0,1].
\end{cases}
\end{equation}
Hence it suffices to consider the problem \eqref{57} in standard time interval with $t$ multiplied by $T$ in the equation. For wave-like solutions oscillatory in time, it is equivalent to increase their frequency by $T$ times. Therefore, we specify the solution as the sine Fourier mode $u=\nu\sin(Tt)$ and investigate how the frequency $T$ influences the error bound in \eqref{58} for the standard heat equation in $\Omega\times[0,1]$.

For the first term, by the estimate \eqref{59} we have $\|u/\nu\|_\dagger=\|\sin(Tt)\|_\dagger\sim O(T^{7/2})$. For the second term, since
\begin{equation}
|\partial_t^{m'}(u/\nu)|=\begin{cases}T^{m'}|\cos(T t)|,\quad m'\text{~is odd},\\T^{m'}|\sin(T t)|,\quad m'\text{~is even};\end{cases},
\end{equation}
we have $\|\partial_t^{m'}(u/\nu)\|_{L_{m'}(0,T;H^2(\Omega))}\sim O(T^{m'})$. For the third term, note $f=T\nu\cdot\cos(Tt)-\Delta_x\nu\cdot\sin(Tt)$ and
\begin{equation}
|\partial_t^{m}f|\leq\begin{cases}T^{m+1}|\nu||\sin(Tt)|+T^m|\Delta_x\nu||\cos(Tt)|,\quad m\text{~is odd},\\T^{m+1}|\nu||\cos(Tt)|+T^m|\Delta_x\nu||\sin(Tt)|,\quad m\text{~is even},\end{cases}
\end{equation}
we have $\|\partial_t^m f\|_{L_m(0,T;L^2(\Omega))^2}\sim O(T^{m+1})$. Combining the preceding results shows that the error bound in \eqref{58} is dominated by $O\left(d^\frac{1}{2}N^2M^{-\frac{1}{2}}T^\frac{7}{2}++d^\frac{1}{2}N^{-\frac{3}{2}}(\frac{T}{N})^{m'}+T(\frac{T}{N})^m\right)$.

Consequently, to make the error bound small, it is necessary to set $\frac{T}{N}<1$; namely, the number of temporal basis functions should exceed the frequency of the solution. This conclusion is trivial if we note the fact that if the solution has high-frequency modes in $t$, the trial space $X_N^{(1)}$ should be large enough to contain them. Moreover, in the case $u$ is $C^\infty$ smooth in $t$, the regularity orders $m'$ and $m$ are infinitely large, and hence the error bound grows exponentially with $T$ if $M$ and $N$ are fixed.

\section{Hyperbolic equations}
Similar to the DABG method for parabolic equations introduced in Section \ref{sec_method_parabolic}, we also develop the method for the following type of hyperbolic equations.
\begin{equation}\label{19}
\begin{cases}
\frac{\partial^2 u(x,t)}{\partial t^2}+\mathcal{L}_xu(x,t)=f(x,t),\quad\text{in}~\Omega\times(0,T],\\
u(x,0) = 0,~\partial_tu(x,0) = g_0(x),\quad\text{in}~\Omega,\\
u(x,t) = 0,\quad\text{on}~\partial\Omega\times[0,T].
\end{cases}
\end{equation}
Since all discussions inherit from Section \ref{sec_method_parabolic} directly, we briefly present the definitions and method without many explanations. Also, we exhibit the error estimates that can be proved by the same argument as in Theorem \ref{thm01} and \ref{thm02}.

Recall $X^{(2)}$, $Y^{(2)}$, $Y_N^{(2)}$ are defined in Section \ref{sec_second_order_equation}, we define the trial space
\begin{equation}
\mathcal{U}^\tH:=\left\{u(x,t):u(x,\cdot)\in X^{(2)}~\forall x\in\Omega,~u|_{\partial\Omega\times[0,T]}=0\right\},
\end{equation}
where the label ``H" is short for the hyperbolic equation. Following the method for the second-order problem in Section \ref{sec_second_order_equation}, the variational form for \eqref{19} is characterized as
\begin{equation}\label{43}
\begin{cases}
\text{Find}~u\in \mathcal{U}^\tH~\text{such that}\\
\left(\partial_tu(x,\cdot), \partial_tv\right)-\alpha\left(\mathcal{L}_xu(x,\cdot), v\right)=-\left(f(x,\cdot), v\right)-g_0(x)v(0)~\forall v\in Y^{(2)},~\forall x\in\Omega.
\end{cases}
\end{equation}

Similarly, define the approximate trial space
\begin{equation}
\mathcal{U}_N^\tH:=\left\{\hu_N(x,t)=\sum_{n=1}^{N-1}\hw_n(x)\phi_n^{(2)}(t),~\hw_n\in\mathcal{S}\right\}.
\end{equation}
Then the Galerkin formulation for \eqref{19} is developed as
\begin{equation}
\begin{cases}
\text{Find}~\hu_N\in \mathcal{U}_N^\tH~\text{such that}\\
\left(\partial_t\hu_N(x,\cdot), \partial_tv_N\right)-\alpha\left(\mathcal{L}_x\hu_N(x,\cdot), v_N\right)=-\left(f(x,\cdot), v_N\right)-g_0(x)v_N(0)~\forall v_N\in Y_N^{(2)},~\forall x\in\Omega,
\end{cases}
\end{equation}
where the integration by parts is used. Taking $\hu_N=\sum_{n=1}^N\hw_n\phi_n^{(2)}$ and $v_N=\psi_j^{(2)}$ for $j=1,\cdots,N$, we obtain
\begin{equation}\label{21}
\sum_{n=1}^N\left(a_{jn}^{(2)}\hw_n(x)-b_{jn}^{(2)}\mathcal{L}_x\hw_n(x)\right)=-\left(f(x,\cdot), \psi_j^{(2)}\right)-g_0(x)\psi_j^{(2)}(0),\quad j=1,\cdots,N,
\end{equation}
with unknowns $\{\hw_n\}\subset\mathcal{S}$, where $a_{jn}^{(2)}$ and $b_{jn}^{(2)}$ are defined in \eqref{20}. Practically, we solve \eqref{21} by the following least-squares optimization,
\begin{equation}\label{34}
\underset{\hw_n\in\mathcal{S}}{\min}~J^{\tH}[\hw_1,\cdots,\hw_N]:=\sum_{j=1}^N\left\|R_j^{(2)}[\hw_1,\cdots,\hw_N]\right\|_{L^2(\Omega)}^2+\lambda N^{-5}I[\hw_N],
\end{equation}
where
\begin{equation}
R_j^{(2)}[\hw_1,\cdots,\hw_N]:=\sum_{n=1}^N\left(a_{jn}^{(2)}\hw_n(x)-b_{jn}^{(2)}\mathcal{L}_x\hw_n(x)\right)+\left(f(x,\cdot),\psi_j^{(2)}\right)+g_0(x)\psi_j^{(2)}(0)
\end{equation}
is the residual of the $j$-th equation in \eqref{21} and
\begin{multline}
I[\hw_N]:=\left(\hu_N''(x,\cdot)+\mathcal{L}_x\hu_N(x,\cdot)-f(x,\cdot),\tilde{L}_N\right)\\
=\frac{T(N-1)}{N(2N+1)}\mathcal{L}_x\hw_N(x)-\int_0^Tf(x,t)L_N(\frac{2t}{T}-1)\td t
\end{multline}
is the inner product of the differential equation residual and the orthogonal polynomial $\tilde{L}_N(t):=L_N(\frac{2t}{T}-1)$ in $L^2([0,T])$. The term $\lambda N^{-5}I[\hw_N]$ can be viewed as a regularization of the last coefficient $\hw_N$, which is specifically constructed according to the a posterior estimate. By Proposition \ref{prop02}, $R_j^{(2)}[\hw_1,\cdots,\hw_N]$ only includes at most 7 nonzero terms if $j\geq2$. Similar to the parabolic case, the regularization term $\lambda N^{-5}I[\hw_N]$ are specially chosen such that the a posterior estimate holds. More precisely, we have the following a posterior estimate for the wave equation.

\begin{theorem}\label{thm03}
Let $u$ be the classical solution of \eqref{19} with $\mathcal{L}_x=-\Delta_x$. Suppose that $f\in\mathcal{W}^m(0,T;L^2(\Omega))$, $u\in L^2(0,T;H_0^1(\Omega))$, $\partial_tu\in L^2(0,T;L^2(\Omega))$ and $\partial_{tt}u\in L^2(0,T;H^{-1}(\Omega))$. Let $\hu_N(x,t)=\sum_{n=1}^N\hw_n(x)\phi_n^{(2)}(t)$, where $\{\hw_n\}_{n=1}^N$ solves the minimization \eqref{34}. Suppose that $\hw_n\in H_0^1(\Omega)$. Denote $\delta=J^{\tH}[\hw_1,\cdots,\hw_N]$ as the minimized loss. Then
\begin{multline}
\underset{0\leq t\leq T}{\sup}\left(\|u(\cdot,t)-\hu_N(\cdot,t)\|_{H^1_0(\Omega)}+\|\partial_tu(\cdot,t)-\partial_t\hu_N(\cdot,t)\|_{L^2(\Omega)}\right)\\
\leq c\left(N^3\sqrt{\delta}+N^{-m} \|\partial_t^m f\|_{L_m(0,T;L^2(\Omega))}\right),
\end{multline}
where $c$ is a constant only depending on $\Omega$, $m$, $T$ and $\lambda$.
\end{theorem}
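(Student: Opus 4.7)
The plan is to adapt the proof of Theorem \ref{thm01} to the wave-equation setting. First I would introduce the error $\he_N := \hu_N - u$. Since $\partial_t\phi_n^{(2)}(0)=0$ for $n\geq 2$ while $\partial_t\phi_1^{(2)}(0)=2/T$, this error satisfies
\begin{equation*}
\partial_{tt}\he_N-\Delta_x\he_N=\hp:=\partial_{tt}\hu_N-\Delta_x\hu_N-f,\quad \he_N(\cdot,0)=0,\quad \partial_t\he_N(\cdot,0)=\eta:=\tfrac{2}{T}\hw_1-g_0,
\end{equation*}
with homogeneous Dirichlet data on $\partial\Omega$. The standard energy identity for the wave equation (test against $\partial_t\he_N$ and apply Gronwall) then gives
\begin{equation*}
\sup_{0\leq t\leq T}\bigl(\|\he_N(\cdot,t)\|_{H^1_0(\Omega)}+\|\partial_t\he_N(\cdot,t)\|_{L^2(\Omega)}\bigr)\leq c_1\bigl(\|\eta\|_{L^2(\Omega)}+\|\hp\|_{L^2(0,T;L^2(\Omega))}\bigr),
\end{equation*}
reducing the task to bounding $\|\eta\|$ and $\|\hp\|$ in terms of $\sqrt{\delta}$ and the approximation error of $f$.

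Next I would derive the perturbed Galerkin identity: subtracting the weak formulations for $u$ and $\hu_N$, and integrating by parts in $t$ using $\psi_j^{(2)}(T)=0$, gives, for every $j=1,\dots,N$ and a.e. $x\in\Omega$,
\begin{equation*}
(\hp(x,\cdot),\psi_j^{(2)})+\eta(x)\,\psi_j^{(2)}(0)=-R_j^{(2)}(x).
\end{equation*}
Specialising to $j=1$ (where $\psi_1^{(2)}(0)=-2$ and $\|\psi_1^{(2)}\|_{L^2([0,T])}^2=4T/3$) and applying Cauchy--Schwarz in $t$ yields $\|\eta\|_{L^2(\Omega)}\leq c(\|R_1^{(2)}\|_{L^2(\Omega)}+\|\hp\|_{L^2(0,T;L^2(\Omega))})$, so the initial-velocity error is controlled as soon as $\|\hp\|$ is.

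For $\|\hp\|$ I would mimic the decomposition in Theorem \ref{thm01}: write $\hp=I_1+I_2$ with $I_1=\pi^{N-1}_{[0,T]}\hp$ and $I_2=\hp-I_1$. Because $\partial_{tt}\hu_N\in P^{N-2}$ in $t$ while the only component of $\Delta_x\hu_N$ outside $P^{N-1}$ is $\tfrac{N-1}{N}\Delta_x\hw_N\,\tilde{L}_N$ (coming from the leading Legendre coefficient of $\phi_N^{(2)}$), $I_2$ simplifies to $(\pi^{N-1}_{[0,T]}f-f)+\tfrac{N-1}{N}\mathcal{L}_x\hw_N\,\tilde{L}_N$; Corollary \ref{cor01} handles the first piece with rate $cN^{-m}\|\partial_t^m f\|_{L^2_m}$, and the second is bounded by $\sqrt{T/(2N+1)}\,\|\mathcal{L}_x\hw_N\|_{L^2(\Omega)}$, which the regularizer $\lambda N^{-5}I[\hw_N]$ controls after isolating $\mathcal{L}_x\hw_N$ from $I[\hw_N]=\tfrac{T(N-1)}{N(2N+1)}\mathcal{L}_x\hw_N-\int_0^T fL_N(\tfrac{2t}{T}-1)\,\td t$ — exactly paralleling the role of $\lambda N^{-4}\|\mathcal{L}_x\hw_N\|^2$ in the parabolic case. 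For $I_1$ I would expand $I_1=\sum_{n=0}^{N-1}\tdp_n(x)\tilde{L}_n(t)$ and invert the tri-diagonal recurrence $\psi_n^{(2)}=(1-1/n)\tilde{L}_n-(2-1/n)\tilde{L}_{n-1}+\tilde{L}_{n-2}$ to get $\tilde{L}_n-\tilde{L}_0=\sum_{j=1}^n c_j^n\psi_j^{(2)}$, whose two-term recurrence has limiting characteristic polynomial $(r-1)^2$; explicit computation gives $c_1^n=n(n+1)/2$, $c_2^n=(n-1)(n+2)/2$ and in general the uniform bound $|c_j^n|\lesssim n^2$. Substituting the Galerkin identity then yields
\begin{equation*}
(\hp,\tilde{L}_n)=(\hp,\tilde{L}_0)-\sum_{j=1}^n c_j^n R_j^{(2)}-\eta\,[(-1)^n-1].
\end{equation*}

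The main obstacle is that $\tilde{L}_0=1\notin Y_N^{(2)}$, so the constant mode $(\hp,\tilde{L}_0)$ is \emph{not} directly constrained by the residuals, in contrast to the parabolic case where the telescoping $\tilde{L}_n=\sum_{j=1}^{n+1}\psi_j^{(1)}$ placed every mode inside the test space. I would handle this via $\|(\hp,\tilde{L}_0)\|_{L^2(\Omega)}^2\leq T\|\hp\|_{L^2(0,T;L^2(\Omega))}^2$ together with a bootstrap absorption, using the $I_2$ bound and the smallness of the accumulated prefactor on $\|(\hp,\tilde{L}_0)\|^2$ to absorb back into the left-hand side. The Cauchy--Schwarz bound $\sum_{j=1}^n(c_j^n)^2\lesssim n^4$ and the Parseval-type summation $\|\pi^{N-1}_{[0,T]}\hp\|^2=\sum_{n=0}^{N-1}\frac{2n+1}{T}\|(\hp,\tilde{L}_n)\|_{L^2(\Omega)}^2$ then produce the accumulation $\sum_{n=1}^{N-1}(2n+1)\,n^4\lesssim N^6$, whose square root is the $N^3$ prefactor in the statement — equivalently, the condition number of the change-of-basis matrix between $\{\psi_j^{(2)}\}_{j=1}^N\cup\{\tilde{L}_0\}$ and the $L^2$-orthonormal Legendre basis of $P^N([0,T])$, the hyperbolic analogue of the $N^{3/2}$ factor in the remark following Theorem \ref{thm01}. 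Inserting the resulting bounds on $I_1$, $I_2$, and $\eta$ into the wave energy estimate yields the claimed inequality with constant $c=c(\Omega,m,T,\lambda)$.
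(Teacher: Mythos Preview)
Your overall strategy---reduce to the wave energy estimate, then bound $\|\hat p\|_{L^2(0,T;L^2(\Omega))}$ and $\|\eta\|_{L^2(\Omega)}$ by expressing the Legendre modes $(\hat p,\tilde L_n)$ through the residuals $R_j^{(2)}$---is exactly the adaptation the paper has in mind, and your change-of-basis computation is correct: one checks $c_j^n=\tfrac{n(n+1)}{2(j-1)}-\tfrac{j}{2}$ for $j\ge2$, so indeed $\sum_{j=1}^n(c_j^n)^2\lesssim n^4$, which is the source of the $N^3$ factor.

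The gap is in the treatment of the zeroth mode. Your bootstrap cannot close: feeding $\|(\hat p,\tilde L_0)\|_{L^2(\Omega)}^2\le T\|\hat p\|_{L^2(0,T;L^2(\Omega))}^2$ into the Parseval sum $\sum_{n=0}^{N-1}\tfrac{2n+1}{T}\|(\hat p,\tilde L_n)\|_{L^2(\Omega)}^2$ produces a prefactor $\sum_{n=0}^{N-1}\tfrac{2n+1}{T}\sim N^2/T$ on $\|(\hat p,\tilde L_0)\|^2$, so you end up with $\|I_1\|^2\le CN^2\|\hat p\|^2+\dots$ and there is no smallness to absorb. The correct route is to realize that the regularizer is designed precisely to supply the missing test direction: since $\tilde L_N(T)=\tilde L_0(T)=1$, one has $\tilde L_0-\tilde L_N=-S_N\in P^N_{0^+}([0,T])=Y_N^{(2)}$, and therefore
\[
(\hat p,\tilde L_0)=(\hat p,\tilde L_N)-(\hat p,S_N)=I[\hat w_N]+\sum_{j=1}^{N}c_j^{N}\bigl(R_j^{(2)}+\eta\,\psi_j^{(2)}(0)\bigr),
\]
which controls the constant mode directly by $\|I[\hat w_N]\|$ (bounded by $(N^{5}\delta/\lambda)^{1/2}$ from the loss), the $R_j^{(2)}$, and $\eta$, with no absorption needed. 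In other words, the regularizer does double duty: it bounds the $\tilde L_N$ component in your $I_2$ \emph{and} completes $\{\psi_j^{(2)}\}_{j=1}^{N}$ to a basis of $P^N([0,T])$ so that every mode of $\pi^N\hat p$ is reachable. A related warning: your plan to bound $\|\eta\|$ by $\|R_1^{(2)}\|+\|\hat p\|$ and then $\|I_1\|^2$ by $\dots+N^2\|\eta\|^2$ reproduces the same non-absorbable loop; once the $\tilde L_0$ fix is in place you should instead exploit that $\eta$ enters $(\hat p,\tilde L_n)$ only through the bounded factor $(-1)^N-(-1)^n$ and disentangle it from $\pi^N\hat p$ using the full $(N{+}1)$-dimensional test set $\{\psi_j^{(2)}\}_{j=1}^N\cup\{\tilde L_N\}$ rather than the single equation $j=1$.
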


\begin{remark}
In Theorem \ref{thm03}, the error bound has a quantity $O(N^3)$ appended to the minimal loss $\sqrt{\delta}$, which is larger than the parabolic case. It is due to the usage of three-term combination in \eqref{45} to construct $\psi^{(2)}_n$, and the transformation matrix has a larger 2-condition number $O(N^3)$.
\end{remark}

Similar to Theorem \ref{thm02}, we also have the error estimate.
\begin{theorem}\label{thm04}
Let $\gamma>2/d+3$ be an integer. Let $u$ be the classical solution of \eqref{19} with $\mathcal{L}_x=-\Delta_x$ and $\Omega$ being $C^\gamma$ smooth. Let $\nu\in C_0^\gamma(\overline{\Omega})$ satisfy $\nu>0$ in $\Omega$ and the normalization condition \eqref{37} (Lemma \ref{lem02} implies the existence of $\nu$). Moreover, let $m\geq0$ be an integer. Suppose $f\in\mathcal{W}^m(0,T;L^2(\Omega))$, $u\in C^2(0,T;C_0^\gamma(\overline{\Omega}))$ and $u/\nu<\infty$ on $\partial\Omega$ for each $t\in[0,T]$. If $\{\hw_n\}_{n=1}^N$ solves the minimization \eqref{34} with hypothesis space $\mathcal{S}=\left\{\hw=\nu\hphi,~\hphi\in\mathcal{F}_{2,M}\right\}$, where the activation function $\sigma$ of $\mathcal{F}_{2,M}$ satisfies the hypothesis of Lemma \ref{lem01}. then $\hu_N(x,t):=\sum_{n=1}^N\hw_n(x)\phi_n^{(2)}(t)$ satisfies the estimate
\begin{multline}
\underset{0\leq t\leq T}{\sup}\left(\|u(\cdot,t)-\hu_N(\cdot,t)\|_{H^1_0(\Omega)}+\|\partial_tu(\cdot,t)-\partial_t\hu_N(\cdot,t)\|_{L^2(\Omega)}\right)\\
\leq c\left(d^\frac{1}{2}N^\frac{7}{2}M^{-\frac{1}{2}}\|u/\nu\|_\dag +N^{-m} \|\partial_t^m f\|_{L_m(0,T;L^2(\Omega))^2}\right)
\end{multline}
provided that $N\gg m$, where $c$ only depends on $\Omega$, $T$, $\gamma$, $\sigma$, $\lambda$ and $m$.
\end{theorem}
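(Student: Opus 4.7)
The plan is to follow the template of Theorem \ref{thm02}, replacing Proposition \ref{prop01} by \ref{prop02} and Theorem \ref{thm01} by \ref{thm03}. Since Theorem \ref{thm03} already bounds the solution error by $c(N^3\sqrt{\delta}+N^{-m}\|\partial_t^m f\|_{L_m^2(0,T;L^2(\Omega))})$ with $\delta=J^\tH[\hw_1,\ldots,\hw_N]$, it suffices to exhibit a specific element $\{\nu\hpsi_n\}_{n=1}^N\subset\mathcal{S}$ for which $J^\tH[\nu\hpsi_1,\ldots,\nu\hpsi_N]\lesssim dNM^{-1}\|u/\nu\|_\dagger^2$ up to the $f$-regularity term already accounted for in the $N^{-m}$ part. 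Multiplication by the $O(N^3)$ a posterior constant then produces the announced $N^{7/2}M^{-1/2}$ scaling.

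To build the approximants, I would first note that $\{\phi_n^{(2)}\}_{n=1}^\infty$ is a Schauder basis of $C^2_{0^-}([0,T])$ (since each $\phi_n^{(2)}$ satisfies $\phi_n^{(2)}(0)=0$), so under the hypothesis $u\in C^2(0,T;C_0^\gamma(\overline{\Omega}))$ one obtains $u=\sum_n\tdu_n(x)\phi_n^{(2)}(t)$ with $\tdu_n\in C_0^\gamma(\overline{\Omega})$. Setting $\tdu^*_n:=\tdu_n/\nu\in C^\gamma(\overline{\Omega})$, the ratio $u/\nu$ admits a compactly supported extension $u^*_\ext\in C^2(0,T;C_\tc^\gamma(\mathbb{R}^d))$ by Lemma \ref{lem05} realizing the infimum in \eqref{54}, and expands as $u^*_\ext=\sum_n\tdu^*_{n,\ext}\phi_n^{(2)}$. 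Applying Lemma \ref{lem04} followed by Lemma \ref{lem01}, I obtain FNNs $\hpsi_n\in\mathcal{F}_{2,M}$ with $\|\hpsi_n-\tdu^*_n\|_{H^2(\Omega)}^2\lesssim M^{-1}(\|\tdu^*_{n,\ext}\|_{L^2(\mathbb{R}^d)}^2+\|D^\gamma\tdu^*_{n,\ext}\|_{L^2(\mathbb{R}^d)}^2)$. Summing over $n$ and exploiting the three-term structure $\phi_n^{(2)}=(1-1/n)\tilde{L}_n+(2-1/n)\tilde{L}_{n-1}+\tilde{L}_{n-2}$ together with an inequality of the form $\sum_n|r_n+r_{n+1}+r_{n+2}|^2\geq\sum_n|r_n|^2$, the total approximation error telescopes into $M^{-1}\|u/\nu\|_\dagger^2$.

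Substituting $\hw_n=\nu\hpsi_n$ into $J^\tH$, each residual $R_j^{(2)}$ becomes a sparse combination of $a_{jn}^{(2)}(\nu\hpsi_n-\nu\tdu^*_n)-b_{jn}^{(2)}\Delta_x(\nu\hpsi_n-\nu\tdu^*_n)$ after subtracting the exact Galerkin identity satisfied by $u$ itself tested against $\psi_j^{(2)}$. Proposition \ref{prop02} shows that each $R_j^{(2)}$ contains at most seven nonzero contributions, and the normalization \eqref{37} gives $\|\Delta_x(\nu(\hpsi_n-\tdu^*_n))\|_{L^2(\Omega)}\lesssim d^{1/2}\|\hpsi_n-\tdu^*_n\|_{H^2(\Omega)}$. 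The regularization term $\lambda N^{-5}\|I[\nu\hpsi_N]\|_{L^2(\Omega)}^2$ is handled by noting that $I[\nu\hpsi_N]$ is (up to a normalizing constant) the $\tilde{L}_N$-Fourier coefficient of the equation residual $\partial_{tt}\hu_N+\mathcal{L}_x\hu_N-f$; the repeated integration-by-parts argument used in \eqref{60}--\eqref{62} shows this coefficient decays faster than any fixed polynomial in $N$, so it is absorbed into the subleading part of the bound.

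The main obstacle will be the bookkeeping of the $O(N)$ diagonal of $A_N^{(2)}$. Unlike the parabolic case where $a_{jj}^{(1)}=O(1)$, here $a_{jj}^{(2)}=-4(j-1)(2j-1)/(Tj)\sim j$, so a naive estimate $\sum_j(a_{jj}^{(2)})^2\|\hpsi_j-\tdu^*_j\|^2\lesssim N^2\sum_j\|\hpsi_j-\tdu^*_j\|^2$ would inflate the loss bound by an extra $N^2$. The cure is to reorganize the double sum column-wise (summing $\sum_{j}|a_{jn}^{(2)}|^2$ over the few values of $j$ coupling to each fixed $n$) before applying the Barron approximation inequality, so that the large diagonal weight is paid for only by the same $\hpsi_n$ and folded into the combined bound $\sum_n(1+a_{nn}^{(2)}{}^2)\|\hpsi_n-\tdu^*_n\|_{H^2}^2$, with the extra $j^2$ being absorbed into the $O(N^3)$ amplification already present in Theorem \ref{thm03}. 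Once this rearrangement goes through, the loss bound becomes $dNM^{-1}\|u/\nu\|_\dagger^2$ and the final estimate \eqref{58} follows directly from Theorem \ref{thm03}.
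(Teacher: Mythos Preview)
Your overall plan---redo the proof of Theorem~\ref{thm02} with Proposition~\ref{prop02} and Theorem~\ref{thm03} in place of Proposition~\ref{prop01} and Theorem~\ref{thm01}---is exactly what the paper intends; it offers no separate argument for Theorem~\ref{thm04} beyond the remark that the proof is ``the same''. Two of your intermediate steps, however, do not go through as written.

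First, the three-term inequality $\sum_n|r_n+r_{n+1}+r_{n+2}|^2\geq\sum_n|r_n|^2$ is false: take $r_n=\cos(2\pi n/3)$ and the left side vanishes identically. Passing from the $\phi_n^{(2)}$-coefficients $\tilde u_n^*$ to the Legendre coefficients that define $\|u/\nu\|_\dagger$ therefore needs a different device (e.g.\ inverting the banded change of basis at the cost of a polynomial factor in $N$). Second---and this is the point you yourself flag as the main obstacle---your resolution of the growing diagonal $a_{jj}^{(2)}\sim j$ does not yield the stated exponent. After the column-wise rearrangement you are left with $\sum_n(1+a_{nn}^{(2)}{}^{2})\|\hat\psi_n-\tilde u^*_n\|_{H^2}^2\sim\sum_n n^2\|\hat\psi_n-\tilde u^*_n\|_{H^2}^2$, and the Barron bound only gives $\|\hat\psi_n-\tilde u^*_n\|_{H^2}^2\lesssim M^{-1}(\cdot)_n$ with $\sum_n(\cdot)_n\lesssim\|u/\nu\|_\dagger^2$ and no individual decay in $n$. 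Hence the honest estimate is $\sum_n n^2(\cdot)_n\le N^2\sum_n(\cdot)_n$, giving $\delta\lesssim dN^2M^{-1}\|u/\nu\|_\dagger^2$ and, after Theorem~\ref{thm03}, a final power $d^{1/2}N^{4}M^{-1/2}$ rather than $N^{7/2}$. The phrase ``absorb the extra $j^2$ into the $O(N^3)$ amplification'' has no operational content: the $N^3$ factor in Theorem~\ref{thm03} sits \emph{outside} the square root and cannot cancel anything inside $\delta$. Reaching $N^{7/2}$ would require an additional ingredient---for instance a weighted a~posteriori bound matched to the growth of $A_N^{(2)}$, or quantified decay of the individual $\|\tilde u^*_{n,\ext}\|_{\mathcal{B}^3}$ in $n$---that neither your sketch nor the paper supplies.
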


As mentioned in Section \ref{sec_a_priori_estimate}, the error bound given in Theorem \ref{thm04} is merely derived from a theoretical perspective. The width $M$ used in practice leading to convergence could be much smaller than the theoretical quantity $N^p$ with $p>7$. A hyperbolic example is presented in Section \ref{sec_case5} to demonstrate it.

\section{Numerical examples}\label{sec_examples}
\subsection{Overview}
The proposed DABG method is tested by several examples including parabolic and hyperbolic equations. Meanwhile, the popular deep least-squares (DLS) method \eqref{44} is implemented for comparison. To be consistent with the hypothesis of the preceding theory, we set homogeneous initial and boundary conditions for all examples. Therefore, we set the hypothesis space $\mathcal{N}'$ of the DLS method as
\begin{gather}
\mathcal{N}'=\left\{\hu(x,t)=t^\beta\nu(x)\hphi(x,t),~\hphi\in\mathcal{F}_{L,M}:\mathbb{R}^{d+1}\rightarrow\mathbb{R}\right\},\label{46}\\
\text{with}\quad\beta=\begin{cases}1\quad\text{for parabolic equations},\\2\quad\text{for hyperbolic equations},\end{cases}\notag
\end{gather}
where the variable $x$ and $t$ are concatenated as one input variable of the FNN $\hphi$. In \eqref{46}, $\nu$ is the same boundary governing function as DABG that is chosen corresponding to the domain shape. In our experiments, we choose $\nu(x)=(x_1^2-1)(x_2^2-1)$ for Case 1-2, where $\Omega$ is set as the square $[-1,1]^2$, and choose $\nu(x)=|x|^2-1$ for Case 3-5, where $\Omega$ is set as the unit sphere. It is clear that $\hu|_{\partial\Omega\times[0,T]}=0$ and $\partial_t^{\beta-1}\hu|_{\Omega\times\{t=0\}}=0$ for all $\hu\in\mathcal{N}'$, so the functions in $\mathcal{N}'$ always satisfy the homogeneous initial and boundary conditions and hence $\mathcal{N}'$ can be taken as the hypothesis space of the DLS formulation \eqref{44}. Other settings are summarized as follows.

\begin{itemize}
  \item {\em Environment.}
  The methods are implemented mainly in Python environment. PyTorch library with CUDA toolkit is utilized for neural network implementation and GPU-based parallel computing.
  \item {\em Optimizer and hyper-parameters.}
  The network-based optimizations problems \eqref{22} and \eqref{34} are solved by the batch gradient descent as well as the compared DLS method. For both methods, the gradient descent is implemented for totally 20000 iterations with the same decaying learning rates.
  \item {\em Network setting.}
  For DABG, we fix $L=3$ in the class $S$ and test the method for various width $M$ and basis numbers $N$. For DLS, we implement it with various depth $L$ and width $M$. For both methods, we choose the sigmoidal function as the activation function of the involved FNNs. And their parameters are initialized by
  \begin{equation}
  a, W_l, b_l\sim U(-\sqrt{M},\sqrt{M}),\quad l=1,\cdots,L.
  \end{equation}
  \item {\em Numerical integration.}
  For the evaluation of $L^2$ norm over $\Omega$, we adopt the quasi-Monte Carlo method with Halton sequence. In every iteration, the integral is approximately evaluated as the mean of the integrand at $10^4$ uniform random points.
  \item {\em Testing set and error evaluation.}
  We generate a spatial testing set $\mathcal{X}$ consisting of $N_\mathcal{X}$ uniform random points in $\Omega$. Also, denote $\mathcal{T}=\{kT/N_\mathcal{T}\}_{k=1}^{N_\mathcal{T}}$ as the set of $N_\mathcal{T}$ equidistant grid points in $[T/N_\mathcal{T},T]$. We compute the following relative $\ell^2$ error over $\mathcal{X}\times\mathcal{T}$.
  \begin{equation}
  e_{\ell^2(\mathcal{X}\times\mathcal{T})}:=\left(\sum_{t\in\mathcal{T}}\sum_{x\in\mathcal{X}}|\hu(x,t)-u(x,t)|^2 / \sum_{t\in\mathcal{T}}\sum_{x\in\mathcal{X}}|u(x,t)|^2\right)^\frac{1}{2},
  \end{equation}
  where $\hu$ and $u$ are the numerical solution and true solution, respectively.
\end{itemize}

\subsection{Case 1: 2-D heat equation oscillatory in time}\label{sec_case1}
We consider the heat equation
\begin{equation}\label{heat_equ}
\begin{cases}u_t-\Delta u = f,\quad\text{in}~\Omega\times(0,T],\\u(x,0)=0,\quad\text{in}~\Omega,\\u=0,\quad\text{on}~\partial\Omega\times[0,T].\end{cases}
\end{equation}
In this case, the domain is set to be the 2-D square $\Omega=[-1,1]^2$, and the true solution is set to be
\begin{equation}
u_1(x,t) = \exp\left(\sin(2\pi wt)(x_1^2-1)(x_2^2-1)\right)-1,
\end{equation}
where $w$ is the wave number in the time variable.

First, we solve the equation with $T=1$ and $w=1,8$. For $w=1$, $u_1$ has only one wave in time and is hence less oscillatory. It is observed in Table \ref{Tab_case1_w1} that both DABG and DLS can find solutions with small errors, but DABG with errors $O(10^{-8})$ is clearly more accurate than DLS with errors $O(10^{-4})$. For $w=8$, the solution is highly oscillatory in time. It is observed in Table \ref{Tab_case1_w8} DABG can still obtain errors $O(10^{-4})$, yet DLS obtains larger errors $O(10^{-1})$. For DABG, the error curves versus $N$ are presented in Figure \ref{Fig_error_case1}, and the exponential decaying rate is observed for small $N$. When $N$ is relatively large, the error decaying of DABG is limited due to the optimization error of deep learning. Besides, it is notable that when $w=8$ the largest width choice $M=80$ leads to the worst result, probably because solving the optimization with wider networks is more difficult. We also show the temporal profiles of the true and numerical solutions at $(x_1,x_2)=(0,0)$ in Figure \ref{Fig_solution_case1}. When $w=8$, DABG can capture all 8 waves, while DLS can only recover 6 waves. This implies DABG is more capable of finding solutions with high frequencies in time.

Next, we investigate the error behavior versus time by solving the problem with $T=2,4,6,8,10$ for the fixed solution with $w=1$. We implement the method with $M=20$ and $N=20,40,60$. The errors are listed in Table \ref{Tab_case1_w1_vs_T} and the error curves versus time are shown in Figure \ref{Tab_case1_w1_vs_T}. We observe that for each fixed $N$, the error basically grows in exponential rate with $T$. Note the numerical result is consistent with the theory for $C^\infty$ solutions discussed in Section \ref{sec:long_time_estimates}.

\begin{table}
\centering
\subfloat[DABG]{
\begin{tabular}{|c|c|c|c|c|c|c|c|c|}
  \hline
   $N$ & $M=10$ & $M=20$ & $M=40$ & $M=80$\\\hline
4 & 7.145e-02 & 7.145e-02 & 7.145e-02 & 7.144e-02 \\\hline
6 & 3.604e-03 & 3.603e-03 & 3.597e-03 & 3.685e-03 \\\hline
8 & 5.015e-04 & 5.016e-04 & 5.000e-04 & 4.999e-04 \\\hline
10 & 4.531e-05 & 4.545e-05 & 4.539e-05 & 1.314e-04 \\\hline
12 & 1.749e-06 & 1.712e-06 & 1.804e-06 & 2.229e-06 \\\hline
14 & 1.813e-07 & 1.157e-07 & 5.795e-07 & 4.839e-07 \\\hline
16 & 1.517e-07 & \textbf{8.218e-08} & 1.840e-06 & 1.690e-07 \\\hline
18 & 1.607e-07 & 8.531e-08 & 3.663e-07 & 5.854e-07 \\\hline
\end{tabular}}\\
\subfloat[DLS]{
\begin{tabular}{|c|c|c|c|c|c|c|c|c|}
  \hline
   $L$ & $M=10$ & $M=20$ & $M=40$ & $M=80$\\\hline
3 & 5.907e-03 & 1.266e-03 & 1.911e-03 & 5.256e-03 \\\hline
4 & 2.898e-03 & \textbf{4.643e-04} & 1.246e-03 & 1.281e-03 \\\hline
5 & 2.571e-03 & 9.885e-01 & 4.881e-04 & 9.885e-01 \\\hline
\end{tabular}}
\caption{\em Relative $\ell^2$ errors of DABG and DLS methods in Case 1 with $w=1$.}\label{Tab_case1_w1}
\end{table}

\begin{table}
\centering
\subfloat[DABG]{
\begin{tabular}{|c|c|c|c|c|}
  \hline
   $N$ & $M=10$ & $M=20$ & $M=40$ & $M=80$ \\\hline
40 & 1.124e-01 & 1.124e-01 & 1.113e-01 & 1.076e-01 \\\hline
50 & 7.045e-03 & 7.045e-03 & 6.932e-03 & 2.397e-02 \\\hline
60 & 4.783e-04 & 4.791e-04 & 4.787e-04 & 1.628e-02 \\\hline
70 & 4.328e-04 & \textbf{4.321e-04} & 4.322e-04 & 1.526e-02 \\\hline
\end{tabular}}\\
\subfloat[DLS]{
\begin{tabular}{|c|c|c|c|c|}
  \hline
   $L$ & $M=10$ & $M=20$ & $M=40$ & $M=80$ \\\hline
3 & 9.607e-01 & 8.574e-01 & \textbf{4.019e-01} & 9.898e-01 \\\hline
4 & 8.675e-01 & 5.152e-01 & 5.973e-01 & 5.786e-01 \\\hline
5 & 9.960e-01 & 9.960e-01 & 9.960e-01 & 9.960e-01 \\\hline
\end{tabular}}
\caption{\em Relative $\ell^2$ errors of DABG and DLS methods in Case 1 with $w=8$.}\label{Tab_case1_w8}
\end{table}

\begin{figure}
\centering
\subfloat[$w=1$]{
\includegraphics[scale=0.6]{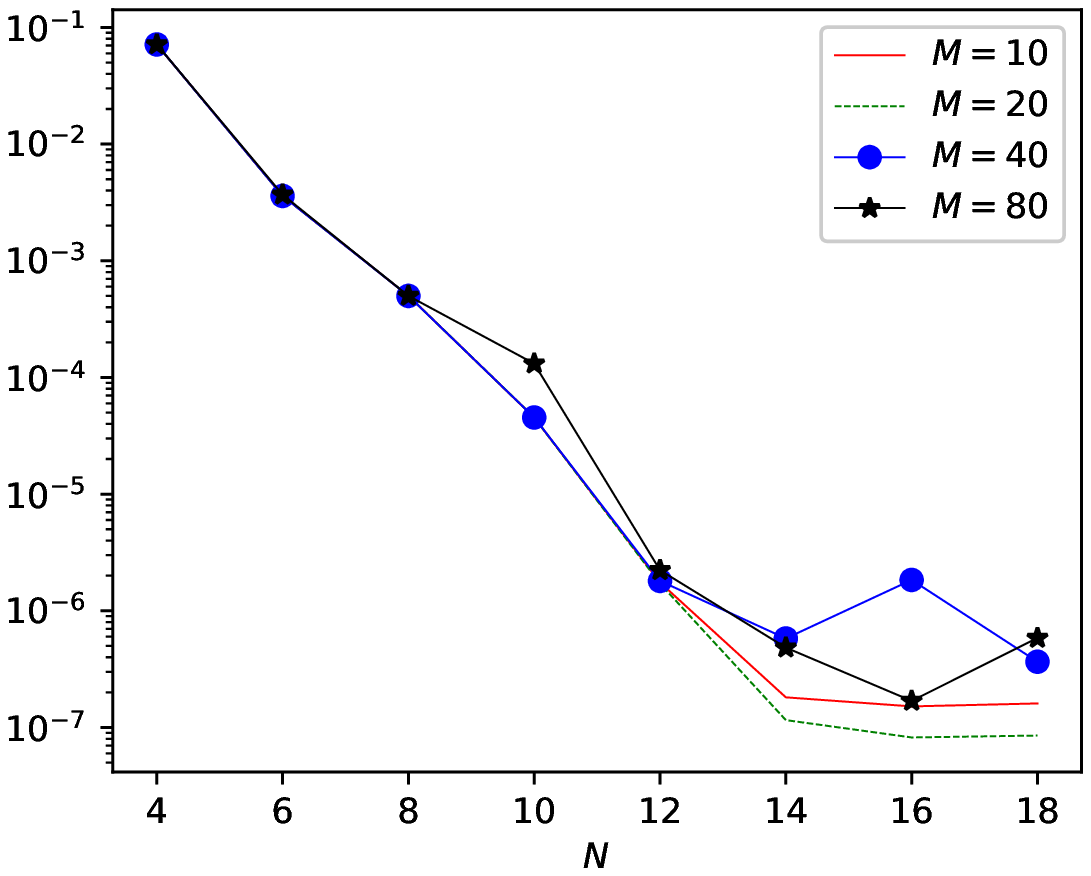}}
\subfloat[$w=8$]{
\includegraphics[scale=0.6]{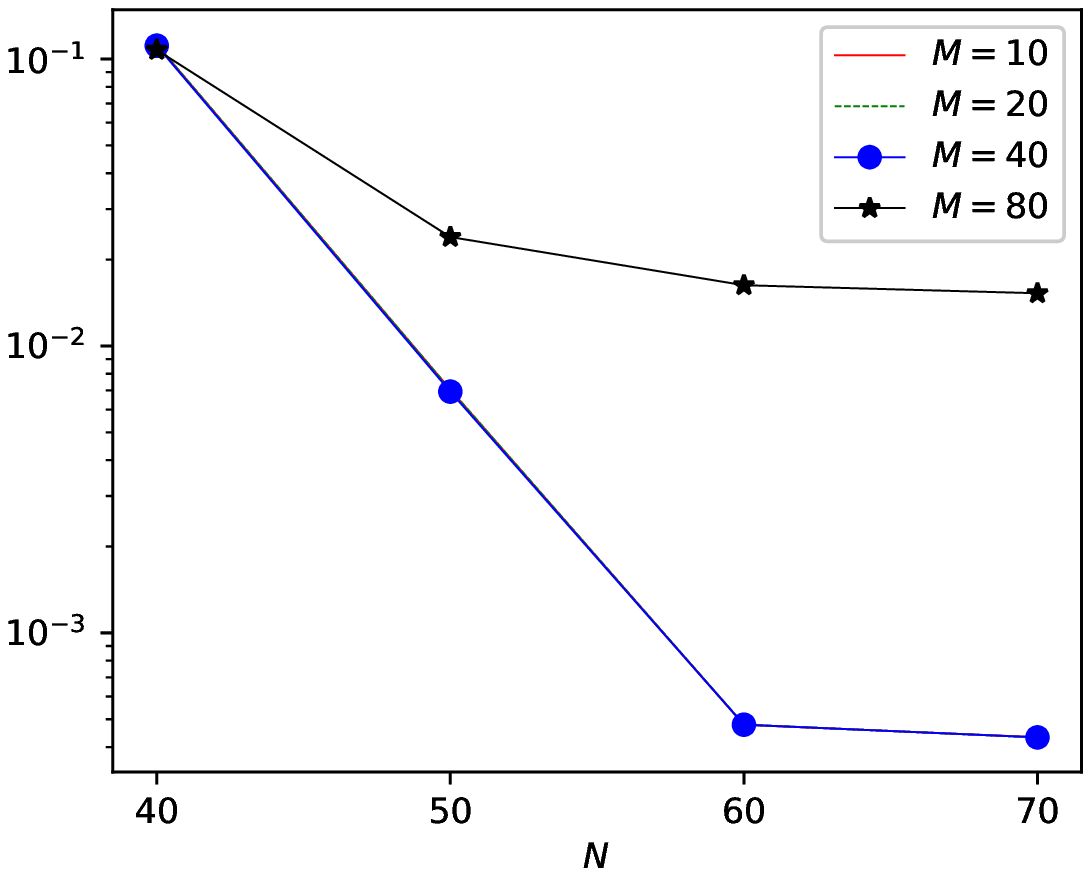}}
\caption{\em Relative $\ell^2$ errors versus $N$ of DABG in Case 1.}
\label{Fig_error_case1}
\end{figure}

\begin{figure}
\centering
\subfloat[$w=1$]{
\includegraphics[scale=0.6]{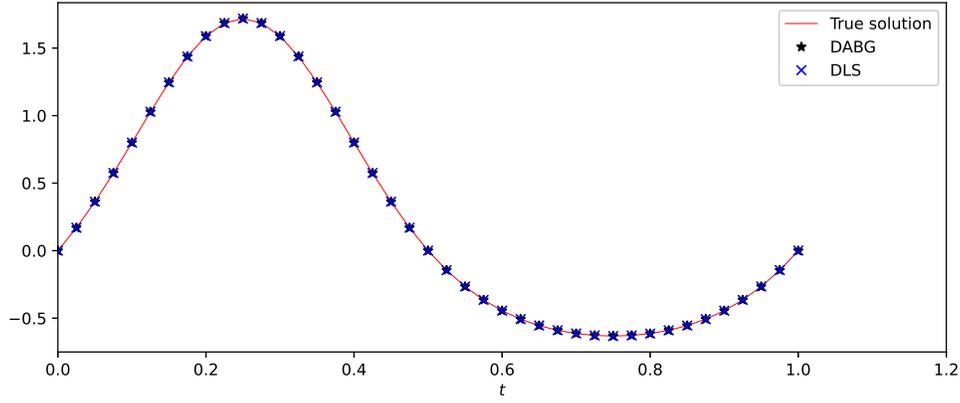}}\\
\subfloat[$w=8$]{
\includegraphics[scale=0.6]{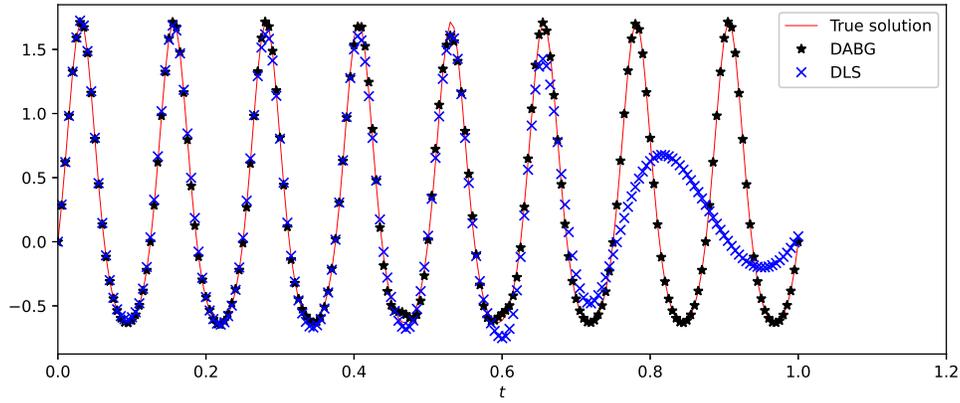}}
\caption{\em Temporal profiles of the true solution and numerical solutions obtained by DABG and deep least square (DLS).}
\label{Fig_solution_case1}
\end{figure}

\begin{table}
\centering
\begin{tabular}{|c|c|c|c|c|c|}
  \hline
   $N$ & $T=2$ & $T=4$ & $T=6$ & $T=8$ & $T=10$\\\hline
10 & 2.393e-02 & 7.501e-01 & 2.533e+00 & 3.727e+00 & 2.632e+00 \\\hline
20 & 2.590e-05 & 4.482e-02 & 1.836e-01 & 1.141e+00 & 2.213e+00 \\\hline
30 & 2.219e-06 & 1.394e-03 & 7.172e-02 & 1.179e-01 & 4.590e-01 \\\hline
40 & 2.497e-07 & 1.682e-05 & 1.362e-03 & 3.546e-02 & 1.020e-01 \\\hline
50 & 8.236e-06 & 1.379e-06 & 5.531e-04 & 3.683e-03 & 5.696e-02 \\\hline
60 & 4.682e-06 & 4.353e-07 & 2.176e-05 & 1.636e-03 & 1.574e-02 \\\hline
\end{tabular}
\caption{\em Relative $\ell^2$ errors of DABG methods in Case 1 for various $T$.}\label{Tab_case1_w1_vs_T}
\end{table}

\begin{figure}
\centering
\includegraphics[scale=0.6]{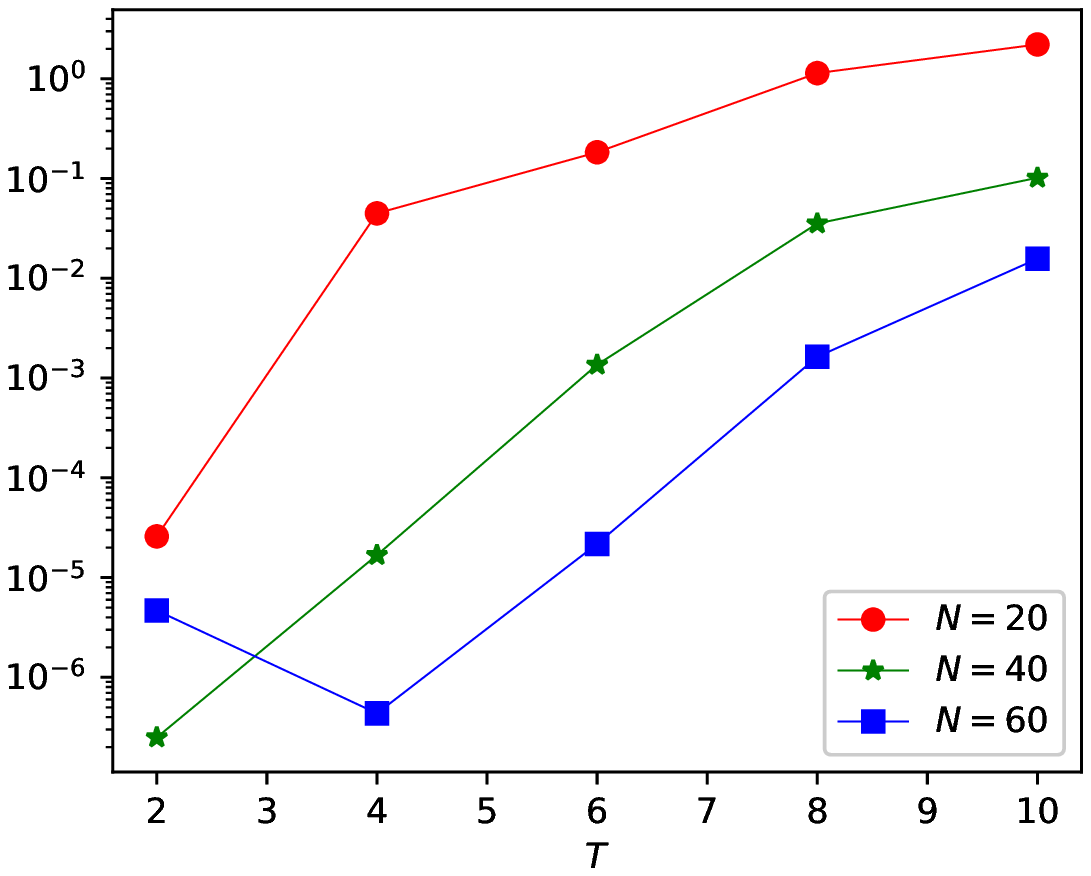}
\caption{\em Relative $\ell^2$ errors versus $T$ of DABG in Case 1.}
\label{Fig_error_vs_T_case1}
\end{figure}

\subsection{Case 2: 2-D heat equation oscillatory in time and space}
We continue solving the heat equation \eqref{heat_equ} with $\Omega:=[-1,1]^2$. The true solution is set as
\begin{equation}
u_2(x,t) = \exp\left(\sin(2\pi wt(x_1^2-1)(x_2^2-1))\right)-1.
\end{equation}
Note the wave number $w$ is coupled with both $t$ and $x$. We set $w=3$, and hence $u_2$ is oscillatory in both time and space. It is shown in Table \ref{Tab_case2_w3} that DABG with errors $O(10^{-3})$ still performs better than deep least square with errors $O(10^{-1})$. However, compared with the preceding case, it shows DABG is less accurate for solutions oscillatory in both time and space than those only oscillatory in time.

\begin{table}
\begin{minipage}[t]{0.47\linewidth}
\centering
\subfloat[DABG]{
\begin{tabular}{|c|c|c|c|}
  \hline
   $N$ & $M=20$ & $M=40$ & $M=80$ \\\hline
5 & 6.436e-01 & 5.612e-01 & 5.590e-01 \\\hline
10 & 5.184e-01 & 4.214e-02 & 3.742e-02 \\\hline
15 & 4.954e-01 & 2.647e-02 & 1.419e-02 \\\hline
20 & 5.902e-01 & 1.667e-02 & 3.095e-02 \\\hline
25 & 3.005e-01 & 1.577e-02 & \textbf{8.192e-03} \\\hline
\end{tabular}}\\
\subfloat[DLS]{
\begin{tabular}{|c|c|c|c|}
  \hline
   $L$ & $M=20$ & $M=40$ & $M=80$ \\\hline
3 & 8.103e-01 & \textbf{5.304e-01} & 7.790e-01 \\\hline
4 & 5.367e-01 & 2.288e+00 & 2.285e+00 \\\hline
5 & 2.291e+00 & 2.289e+00 & 2.285e+00 \\\hline
\end{tabular}}
\caption{\em Relative $\ell^2$ errors of DABG and DLS methods in Case 2 with $w=3$.}\label{Tab_case2_w3}
\end{minipage}
\hfill
\begin{minipage}[t]{0.47\linewidth}
\centering
\subfloat[DABG]{
\begin{tabular}{|c|c|c|c|}
  \hline
   $N$ & $M=40$ & $M=80$ & $M=160$ \\\hline
10 & 3.615e-01 & 3.615e-01 & 3.616e-01 \\\hline
15 & 3.077e-03 & 3.077e-03 & 3.078e-03 \\\hline
20 & 3.078e-03 & 3.078e-03 & 3.077e-03 \\\hline
25 & 3.075e-03 & 3.075e-03 & \textbf{3.074e-03} \\\hline
\end{tabular}}\\
\subfloat[DLS]{
\begin{tabular}{|c|c|c|c|}
  \hline
   $L$ & $M=40$ & $M=80$ & $M=160$ \\\hline
3 & 6.844e-01 & 9.935e-01 & 4.681e-02 \\\hline
4 & \textbf{4.542e-02} & 5.292e-01 & 9.935e-01 \\\hline
5 & 9.935e-01 & 9.935e-01 & 9.935e-01 \\\hline
\end{tabular}}
\caption{\em Relative $\ell^2$ errors of DABG and DLS methods in Case 3 with $w=3$.}\label{Tab_case3_w3}
\end{minipage}
\end{table}

\subsection{Case 3: 20-D parabolic equation}
In this case, we test the capability of the methods in high dimensions. Let us consider the following parabolic equation with non-constant coefficients
\begin{equation}
\begin{cases}u_t-\nabla\cdot(a(x)\nabla u) = f,\quad\text{in}~\Omega\times(0,T],\\u(x,0)=0,\quad\text{in}~\Omega,\\u=0,\quad\text{on}~\partial\Omega\times[0,T],\end{cases}
\end{equation}
with $a(x)=1+|x|^2/2$. The domain is set to be the 20-D unit ball $\Omega=\{x\in\mathbb{R}^{20}:|x|<1\}$, and the true solution is set to be
\begin{equation}
u_3(x,t) = \sin\left((\sin(2\pi wt)(|x|^2-1)\right).
\end{equation}
We set $w=3$, and obtained errors are shown in Table \ref{Tab_case3_w3}. It shows that DABG still obtains smaller errors ($O(10^{-3})$) than deep least square ($O(10^{-2})$).

\subsection{Case 4: 20-D Allen-Cahn equation}
In this case, we solve the Allen-Cahn equation
\begin{equation}
\begin{cases}u_t-\Delta u+u^3-u = f_\text{AC},\quad\text{in}~\Omega\times(0,T],\\u(x,0)=0,\quad\text{in}~\Omega,\\u=0,\quad\text{on}~\partial\Omega\times[0,T].\end{cases}
\end{equation}
The domain is set as $\Omega=\{x\in\mathbb{R}^{20}:|x|<1\}$, and the true solution is set to be
\begin{equation}
u_4(x,t) = \sin\left(\sin(2\pi wt)(|x|^2-1)\right).
\end{equation}
Due to the nonlinearity of the equation, we propose the following fixed point scheme,
\begin{equation}\label{51}
u^{(n)}_t-\Delta u^{(n)}-u^{(n)} = f_\text{AC}-(u^{(n-1)})^3,\quad n=1,2,\cdots,
\end{equation}
where $u^{(0)}$ is an initial guess, and $u^{(n)}$ is the numerical solution obtained in the $n$-th step. In practice, we use DABG to solve \eqref{41}. In each SGD iteration, we update the loss function according to \eqref{41}; Namely, in the $n$-th iteration, we compute the loss function with $f=f_\text{AC}-(u^{(n-1)})^3$, and update the solution by gradient descent. Obtained errors for $w=3$ are shown in Table \ref{Tab_case4_w3}. It shows that DABG (with errors $O(10^{-3})$) performs better than least square (with errors $O(10^{-2})$).

\begin{table}
\begin{minipage}[t]{0.47\linewidth}
\centering
\subfloat[DABG]{
\begin{tabular}{|c|c|c|c|}
  \hline
   $N$ & $M=40$ & $M=80$ & $M=160$ \\\hline
10 & 3.080e-01 & 3.080e-01 & 3.080e-01 \\\hline
14 & 2.300e-03 & 2.300e-03 & 2.300e-03 \\\hline
18 & \textbf{2.069e-03} & 2.069e-03 & 2.069e-03 \\\hline
22 & 2.070e-03 & 2.070e-03 & 2.070e-03 \\\hline
\end{tabular}}\\
\subfloat[DLS]{
\begin{tabular}{|c|c|c|c|}
  \hline
   $L$ & $M=40$ & $M=80$ & $M=160$ \\\hline
3 & 5.312e-01 & 9.937e-01 & 9.394e-01 \\\hline
4 & \textbf{7.791e-02} & 1.329e-01 & 9.937e-01 \\\hline
5 & 9.937e-01 & 9.937e-01 & 9.937e-01 \\\hline
\end{tabular}}
\caption{\em Relative $\ell^2$ errors of DABG and DLS methods in Case 4 with $w=3$.}\label{Tab_case4_w3}
\end{minipage}
\hfill
\begin{minipage}[t]{0.47\linewidth}
\centering
\subfloat[DABG]{
\begin{tabular}{|c|c|c|c|}
  \hline
   $N$ & $M=40$ & $M=80$ & $M=160$ \\\hline
10 & 3.055e-01 & 3.057e-01 & 3.056e-01 \\\hline
14 & 3.505e-03 & 3.505e-03 & 3.513e-03 \\\hline
18 & \textbf{1.907e-03} & 1.907e-03 & 1.913e-03 \\\hline
22 & 2.114e-03 & 2.114e-03 & 2.121e-03 \\\hline
\end{tabular}}\\
\subfloat[DLS]{
\begin{tabular}{|c|c|c|c|}
  \hline
   $L$ & $M=40$ & $M=80$ & $M=160$ \\\hline
3 & 1.141e-01 & 1.142e+00 & 9.744e-01 \\\hline
4 & \textbf{8.465e-02} & 9.745e-01 & 9.744e-01 \\\hline
5 & 9.745e-01 & 9.745e-01 & 9.745e-01 \\\hline
\end{tabular}}
\caption{\em Relative $\ell^2$ errors of DABG and DLS methods in Case 5 with $w=3$.}\label{Tab_case5_w3}
\end{minipage}
\end{table}

\subsection{Case 5: 20-D hyperbolic equation}\label{sec_case5}
In this case, we solve the following hyperbolic equation with non-constant coefficients,
\begin{equation}
\begin{cases}u_{tt}-\nabla\cdot(a(x)\nabla u) = f,\quad\text{in}~\Omega\times(0,T],\\u(x,0)=u_t(x,0)=0,\quad\text{in}~\Omega,\\u=0,\quad\text{on}~\partial\Omega\times[0,T],\end{cases}
\end{equation}
with $a(x)=1+|x|^2/2$. The domain $\Omega=\{x\in\mathbb{R}^{20}:|x|<1\}$ and the true solution is
\begin{equation}
u_5(x,t) = \sin\left(t\sin(2\pi wt)(|x|^2-1)\right).
\end{equation}

We report the errors for $w=3$ in Table \ref{Tab_case5_w3}. Same as the previous parabolic cases, DABG obtains smaller errors than deep least square in this hyperbolic example.

\section{Conclusion}
We develop a novel and highly accurate DABG method for solving
high-dimensional evolution equations. In this method, the solution is approximated by the adaptive basis structure, which is formulated as a tensor-product of a set of orthogonal polynomials and a class of DNNs. The first step is the time discretization of the equation using traditional Galerkin formulation, leading to a linear system with unknown DNNs. The second step is determining these DNNs via the minimization of the residual of the linear system. This method enjoys not only the high accuracy of Galerkin formulation, but also the capability of high-dimensional approximation of DNNs. For problems with oscillatory solutions, the numerical results show significant advantage of the proposed method over the popular deep least-squares method.

Note that the technique for time discretization is not limited to the Galerkin method with polynomial-type basis used in this work. For some special time-dependent problems (e.g., porous medium equation and Navier-Stokes equations), it is promising to consider other techniques such as finite element approximation and energy stable numerical schemes, which can be further studied. Another future direction is to consider
better deep solvers for the spatial equation after time discretization. In this work, we formulate the loss function following the idea of minimal residual, which is straightforward but sometimes less efficient. The simultaneous training of all the unknown DNNs is also computationally expensive. Since the coefficient matrix has a sparse banded structure, faster learning relying on this property is expected to be developed.

\bibliography{expbib}

\begin{thebibliography}{10}

\bibitem{Barron1992}
A.~R. Barron.
\newblock {Neural net approximation}.
\newblock In {\em Proceedings of the 7th Yale Workshop on Adaptive and Learning
  Systems}, 1992.

\bibitem{Barron1993}
A.~R. Barron.
\newblock Universal approximation bounds for superpositions of a sigmoidal
  function.
\newblock {\em IEEE Trans. Inform. Theory}, 39(3):930--945, 1993.

\bibitem{Beck2019}
C.~Beck, S.~Becker, P.~Cheridito, A.~Jentzen, and A.~Neufeld.
\newblock {Deep splitting method for parabolic PDEs}.
\newblock {\em SIAM Journal on Scientific Computing}, 43:A3135–A3154, 2021.

\bibitem{Berg2018}
J.~Berg and K.~Nyström.
\newblock A unified deep artificial neural network approach to partial
  differential equations in complex geometries.
\newblock {\em Neurocomputing}, 317:28--41, 2018.

\bibitem{Caragea2020}
A.~Caragea, P.~Petersen, and F.~Voigtlaender.
\newblock {Neural network approximation and estimation of classifiers with
  classification boundary in a Barron class}.
\newblock {\em arXiv e-prints}, arXiv:2011.09363, 2020.

\bibitem{E2017}
W.~E, J.~Han, and A.~Jentzen.
\newblock {Deep learning-based numerical methods for high-dimensional parabolic
  partial differential equations and backward stochastic differential
  equations}.
\newblock {\em Communications in Mathematics and Statistics}, 5(4):349--380,
  2017.

\bibitem{E2020_2}
W.~E, C.~Ma, S.~Wojtowytsch, and L.~Wu.
\newblock Towards a mathematical understanding of neural network-based machine
  learning: what we know and what we don't.
\newblock {\em arXiv e-prints}, arXiv:2009.10713, 2020.

\bibitem{E2019}
W.~E, C.~Ma, and L.~Wu.
\newblock {The Barron space and the flow-induced function spaces for neural
  network models}.
\newblock {\em Constructive Approximation}, 2021.

\bibitem{E2018}
W.~E and B.~Yu.
\newblock {The deep Ritz method: a deep learning-based numerical algorithm for
  solving variational problems}.
\newblock {\em Communications in Mathematics and Statistics}, 6(1), 2018.

\bibitem{Ern2004}
A.~Ern and J.-L. Guermond.
\newblock {\em Theory and Practice of Finite Elements}.
\newblock Springer, 2004.

\bibitem{Evans2010}
L.~C. Evans.
\newblock {\em Partial Differential Equations: Second Edition}.
\newblock American Mathematical Society, 2010.

\bibitem{Gilbarg2015}
D.~Gilbarg and N.~S. Trudinger.
\newblock {\em Elliptic Partial Differential Equations of Second Order}.
\newblock Springer, 2015.

\bibitem{Gu2021}
Y.~Gu, C.~Wang, and H.~Yang.
\newblock {Structure probing neural network deflation}.
\newblock {\em Journal of Computational Physics}, 434:110231, 2021.

\bibitem{Han2018}
J.~Han, A.~Jentzen, and W.~E.
\newblock Solving high-dimensional partial differential equations using deep
  learning.
\newblock {\em Proceedings of the National Academy of Sciences},
  115(34):8505--8510, 2018.

\bibitem{Jiao2021}
Y.~Jiao, Y.~Lai, X.~Lu, F.~Wang, J.~Z. Yang, and Y.~Yang.
\newblock {Deep neural networks with ReLU-sine-exponential activations break
  curse of dimensionality on H\" older class}.
\newblock {\em arXiv e-prints}, arXiv:2103.00542, 2021.

\bibitem{Khoo2021}
Y.~Khoo, J.~Lu, and L.~Ying.
\newblock {Solving parametric PDE problems with artificial neural networks}.
\newblock {\em European Journal of Applied Mathematics}, 32:421 -- 435, 2021.

\bibitem{Klusowski2018}
J.~M. Klusowski and A.~R. Barron.
\newblock {Approximation by combinations of ReLU and squared ReLU ridge
  functions with $\ell^1$ and $\ell^0$ controls}.
\newblock {\em IEEE Trans. Inform. Theory}, 64(12):7649--7656, 2018.

\bibitem{Li2019}
K.~Li, K.~Tang, T.~Wu, and Q.~Liao.
\newblock {D3M: A deep domain decomposition method for partial differential
  equations}.
\newblock {\em IEEE Access}, 8:5283--5294, 2019.

\bibitem{Raissi2019}
M.~Raissi, P.~Perdikaris, and G.~E. Karniadakis.
\newblock {Physics-informed neural networks: A deep learning framework for
  solving forward and inverse problems involving nonlinear partial differential
  equations}.
\newblock {\em Journal of Computational Physics}, 378:686--707, 2019.

\bibitem{Shen2011}
J.~Shen, T.~Tang, and L.-L. Wang.
\newblock {\em Spectral Methods: Algorithms, Analysis and Applications}.
\newblock Springer, 2011.

\bibitem{Shen2021_3}
Z.~Shen, H.~Yang, and S.~Zhang.
\newblock {Deep network approximation: Achieving arbitrary accuracy with fixed
  number of neurons}.
\newblock {\em arXiv e-prints}, arXiv:2107.02397, 2021.

\bibitem{Shen2021_2}
Z.~Shen, H.~Yang, and S.~Zhang.
\newblock {Deep network with approximation error being reciprocal of width to
  power of square root of depth}.
\newblock {\em Neural Computation}, 33(4):1005--1036, 2021.

\bibitem{Shen2021_1}
Z.~Shen, H.~Yang, and S.~Zhang.
\newblock {Neural network approximation: Three hidden layers are enough}.
\newblock {\em Neural Networks}, 141:160--173, 2021.

\bibitem{Siegel2020_1}
J.~W. Siegel and J.~Xu.
\newblock Approximation rates for neural networks with general activation
  functions.
\newblock {\em Neural Networks}, 128:313--321, 2020.

\bibitem{Siegel2020_2}
J.~W. Siegel and J.~Xu.
\newblock {High-order approximation rates for neural networks with ReLU$^k$
  activation functions}.
\newblock {\em arXiv e-prints}, arXiv:2012.07205, 2020.

\bibitem{Sirignano2018}
J.~Sirignano and K.~Spiliopoulos.
\newblock {DGM: A deep learning algorithm for solving partial differential
  equations}.
\newblock {\em Journal of Computational Physics}, 375:1339--1364, 2018.

\bibitem{Cai2019}
Z.-Q. J.~Xu W.~Cai.
\newblock {Multi-scale Deep Neural Networks for Solving High Dimensional PDEs}.
\newblock {\em arXiv e-prints}, arXiv:1910.11710, 2019.

\bibitem{Zang2019}
Y.~Zang, G.~Bao, X.~Ye, and H.~Zhou.
\newblock Weak adversarial networks for high-dimensional partial differential
  equations.
\newblock {\em Journal of Computational Physics}, 411:109409, 2020.

\end{thebibliography}
\bibliographystyle{plain}

\appendix

\end{document}